\newtheorem{thm}{Theorem}[section]
\newtheorem{prop}[thm]{Proposition}
\newtheorem{lem}[thm]{Lemma}
\newtheorem{definition}[thm]{Definition}
\begin{document}
\title{The holomorphic symplectic structures on hyper-K\"ahler manifolds of type $A_{\infty}$}
\author{Kota Hattori}
\date{}
\maketitle
{\begin{center}
{\it Graduate School of Mathematical Sciences, University of Tokyo\\
3-8-1 Komaba, Meguro, Tokyo 153-8914, Japan\\
khattori@ms.u-tokyo.ac.jp}
\end{center}}
\maketitle
{\abstract Hyper-K\"ahler manifolds of type $A_{\infty}$ are noncompact complete Ricci-flat K\"ahler manifolds of complex dimension $2$, constructed by Anderson, Kronheimer, LeBrun \cite{AKL} and Goto \cite{G1}. We study the holomorphic symplectic structures preserved by the natural $\mathbb{C}^{\times}$-actions on these manifolds, then show the sufficient and necessary conditions for the existence of $\mathbb{C}^{\times}$-equivariant biholomorphisms between two hyper-K\"ahler manifolds of type $A_{\infty}$ preserving their holomorphic symplectic structures. As a consequence, we can show the existence of a complex manifold of dimension $2$ on which there is a continuous family of complete Ricci-flat K\"ahler metrics with distinct volume growth.}
\section{Introduction}
Hyper-K\"ahler manifolds of type $A_{\infty}$ were first constructed by Anderson, Kronheimer and LeBrun in \cite{AKL}, as the first example of complete Ricci-flat K\"ahler manifolds with infinite topological type. Here, infinite topological type means that their homology groups are infinitely generated. After \cite{AKL}, Goto \cite{G1,G2} has succeeded in constructing these manifolds in another way, using hyper-K\"ahler quotient construction. He also constructed the higher dimensional complete hyper-K\"ahler manifolds with infinite topological type. Some of the topological and geometric properties of hyper-K\"ahler manifolds of type $A_{\infty}$ were studied well in the above papers, and the author studied the volume growth of the Riemannian metrics in \cite{Hat}. Then this paper focuses on the complex geometry on the hyper-K\"ahler manifolds of type $A_{\infty}$.

A hyper-K\"ahler manifold is, by definition, a Riemannian manifold $(X,g)$ of real dimension $4n$ equipped with three complex structures $I_1,I_2,I_3$ satisfying the quaternionic relations $I_1^2 = I_2^2 = I_3^2 = I_1I_2I_3 = -{\rm id}$ with respect to all of which the metric $g$ is K\"ahlerian. Then the holonomy group of $g$ is a subgroup of $Sp(n)$ and $g$ is Ricci-flat. By the definition, the hyper-K\"ahler manifold carry three K\"ahler forms defined by $\omega_i:=g(I_i\cdot,\cdot)$ for $i=1,2,3$, then we have an non-degenerate closed $(2,0)$-form $\omega_{\mathbb{C}} = \omega_3 - \sqrt{-1}\omega_2$, which is called a holomorphic symplectic form, if $(X, I_1)$ is regarded as a complex manifold. Conversely, it is known that $(g,I_1,I_2,I_3)$ are reconstructed from $(\omega_1,\omega_2,\omega_3)$, thus we call $\omega=(\omega_1,\omega_2,\omega_3)$ the hyper-K\"ahler structure over $X$. In this paper we regard the hyper-K\"ahler manifold $(X,\omega)$ as a complex manifold with respect to the complex structure $I_1$.

In \cite{Hat}, the author computed the volume growth of hyper-K\"ahler manifolds of type $A_{\infty}$. Here, the volume growth of a Riemannian manifold $(X,g)$ is the asymptotic behavior of the function $V_g(p_0,r)$, which is defined as the volume of the geodesic ball of radius $r$ centered at $p_0\in X$. Then the following result was obtained.
\begin{thm}[\cite{Hat}]
There exists a $C^{\infty}$ manifold $X$ of ${\rm dim}_{\mathbb{R}} X = 4$ and a family of hyper-K\"ahler structures $\omega^{(\alpha)}$ on $X$ for $3 < \alpha < 4$ which carry complete hyper-K\"ahler metrics $g^{(\alpha)}$ with
\begin{eqnarray}
0 < \liminf_{r\to +\infty}\frac{V_{g^{(\alpha)}}(p_0,r)}{r^{\alpha}} \le \limsup_{r\to +\infty}\frac{V_{g^{(\alpha)}}(p_0,r)}{r^{\alpha}} < +\infty.\nonumber
\end{eqnarray}
for any $p_0\in X$.
\label{previous result}
\end{thm}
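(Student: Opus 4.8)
The plan is to realize the manifolds of type $A_\infty$ concretely through the Gibbons--Hawking ansatz over $\mathbb{R}^3$ and then to reduce the volume-growth computation to a weighted integral in $\mathbb{R}^3$. Recall that such a space is built from a positive harmonic function on $\mathbb{R}^3$ given by a regularized sum of fundamental solutions,
\[
V(x) = \sum_{n} \left( \frac{1}{2|x-p_n|} - c_n \right),
\]
with monopole points $p_n = (0,0,t_n)$ placed along the $x_3$-axis and constants $c_n$ chosen so that the series converges, together with a connection $\theta$ on a principal $S^1$-bundle satisfying $d\theta = *_0\,dV$, the metric being $g = V\,g_0 + V^{-1}\theta^2$ where $g_0$ is flat. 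The data I am free to tune is the distribution of the $t_n$, encoded by the counting function $N(R) = \#\{n : |p_n|\le R\}$. First I would fix, for each target exponent, a configuration with $N(R)\asymp R^{\delta}$ for a suitable $\delta\in(0,1)$, check convergence and positivity of the regularized series (so the ALF constant is set to zero), and confirm that the resulting $(X,\omega^{(\alpha)})$ is complete, so that it is one of the manifolds asserted by the theorem.

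Next I would reduce the Riemannian quantity to Euclidean data. The volume form of $g$ equals $2\pi\,V\,d\mathrm{vol}_{g_0}$ after normalizing the fiber length, so the volume of a region is, up to a universal constant, the $V$-weighted Euclidean volume of its image in $\mathbb{R}^3$. Moreover the geodesic distance from $p_0$ to a point over $x\in\mathbb{R}^3$ is comparable to the distance $d_V(x)$ in the conformally rescaled metric $V\,g_0$ on $\mathbb{R}^3$; establishing this requires showing that the fiber term $V^{-1}\theta^2$ contributes only to lower order and is controlled near the nut points $p_n$, where $V$ blows up and the circle fiber collapses onto the homology-generating spheres. Granting this,
\[
V_{g^{(\alpha)}}(p_0,r) \asymp \int_{\{x\,:\,d_V(x)\le r\}} V(x)\,d\mathrm{vol}_{g_0}(x).
\]

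The heart of the argument, and the step I expect to be the main obstacle, is a precise two-sided asymptotic analysis of $V$. Because the monopoles lie on a line, $V$ is not radially symmetric and must be estimated as a function of both the axial coordinate and the distance to the axis. Replacing the discrete sum by a line charge of density $N'(t)\asymp t^{\delta-1}$ and integrating, one finds along generic directions a power law $V(x)\asymp|x|^{-\gamma}$ with $\gamma = 1-\delta\in(0,1)$; the delicate points are the uniformity of this bound across directions, the behavior near the axis, and the convergence of the regularization. Granting $V\asymp|x|^{-\gamma}$, one gets $d_V(x)\asymp\int_0^{|x|}s^{-\gamma/2}\,ds\asymp|x|^{1-\gamma/2}$, so that $\{d_V\le r\}$ is comparable to a Euclidean ball of radius $R\asymp r^{2/(2-\gamma)}$, and
\[
\int_{|x|\le R}|x|^{-\gamma}\,d\mathrm{vol}_{g_0}\asymp R^{3-\gamma}\asymp r^{\,2(3-\gamma)/(2-\gamma)}.
\]
Setting $\alpha = 2(3-\gamma)/(2-\gamma)$ defines a strictly increasing bijection from $\gamma\in(0,1)$ onto $\alpha\in(3,4)$, the endpoints $\gamma=0$ and $\gamma=1$ degenerating to the ALF and ALE growth rates $r^3$ and $r^4$.

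Finally I would package the $\liminf$--$\limsup$ statement by proving the matching upper and lower bounds in the displayed estimate with explicit constants. Since $V$ only oscillates around its power-law envelope rather than converging to it, one should not expect an exact limit, which is precisely why the conclusion is stated with a two-sided inequality; the constants can nonetheless be taken independent of $p_0$, because changing the base point perturbs geodesic distances only by a bounded additive amount, which affects the value of $R$ and of the enclosed volume only to subleading order. This would yield the family $\omega^{(\alpha)}$ with the stated volume growth for every $\alpha\in(3,4)$.
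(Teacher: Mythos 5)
The paper does not actually prove this theorem here: it is quoted verbatim from the author's earlier paper \cite{Hat}, and the only proof available for comparison is that reference. Your proposal follows essentially the same route as \cite{Hat}: realize the spaces via the Gibbons--Hawking ansatz with potential $V(x)=\sum_n \frac{1}{2|x+\lambda_n|}$ (this is, up to normalization, exactly the function $\Phi_{\lambda}$ appearing in Lemma \ref{4.2}), estimate $V\asymp |x|^{-\gamma}$ for collinear monopoles with counting function $N(R)\asymp R^{\delta}$, $\gamma=1-\delta$, and convert the weighted Euclidean volume of $\{d_V\le r\}$ into the growth rate $\alpha=2(3-\gamma)/(2-\gamma)$; with $\lambda_n=n^{\beta}i$ one has $\delta=1/\beta$ and your formula reproduces the paper's $\alpha=4-\frac{2}{\beta+1}$ from Section 6, so the computation is consistent. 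Two small points: the summability condition $\sum_n(1+|\lambda_n|)^{-1}<+\infty$ built into $({\rm Im}\mathbb{H})_0^{\mathbf{I}}$ makes the regularizing constants $c_n$ unnecessary (and setting them to zero avoids any worry about positivity of $V$), and the theorem asserts a single smooth $4$-manifold $X$ carrying all the structures $\omega^{(\alpha)}$, so you still need the (easy, but not stated) observation that the $S^1$-fibrations over $\mathbb{R}^3\setminus\{-\lambda_n\}$ arising from different collinear configurations are all diffeomorphic.
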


The hyper-K\"ahler manifolds $(X,\omega^{(\alpha)})$ in the above theorem are already constructed in \cite{AKL}\cite{G1}, and the essential part of Theorem \ref{previous result} is the computation of the volume growth of these manifolds.
In this paper we study the holomorphic symplectic structure $\omega^{(\alpha)}_{\mathbb{C}}$ over $X$. The period of $(X,\omega^{(\alpha)}_{\mathbb{C}})$, that is the cohomology class determined by $\omega^{(\alpha)}_{\mathbb{C}}$, is independent of $\alpha$. Then the holomorphic symplectic structures $\omega^{(\alpha)}_{\mathbb{C}}$ are expected to be independent of $\alpha$. We actually obtain the following result.
\begin{thm}
There exists a complex manifold $X$ of ${\rm dim}_{\mathbb{C}} X = 2$ and a family of complete Ricci-flat K\"ahler metrics $g^{(\alpha)}$ on $X$ for $3<\alpha <4$ with
\begin{eqnarray}
0 < \liminf_{r\to +\infty}\frac{V_{g^{(\alpha)}}(p_0,r)}{r^{\alpha}} \le \limsup_{r\to +\infty}\frac{V_{g^{(\alpha)}}(p_0,r)}{r^{\alpha}} < +\infty.\nonumber
\end{eqnarray}
for any $p_0\in X$.
\label{main theorem}
\end{thm}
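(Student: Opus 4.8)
The plan is to upgrade Theorem~\ref{previous result} from a statement about $C^\infty$ manifolds to a statement about a single fixed complex manifold. The key difference between the two theorems is that in Theorem~\ref{previous result} the underlying object is a smooth manifold $X$ carrying a family of hyper-K\"ahler structures $\omega^{(\alpha)}$, whereas Theorem~\ref{main theorem} asserts that the \emph{complex} structure $I_1^{(\alpha)}$ can be taken to be independent of $\alpha$, so that all the metrics $g^{(\alpha)}$ live on one and the same complex manifold. Thus the entire content is to show that the complex manifolds $(X, I_1^{(\alpha)})$ are biholomorphic to each other for all $3 < \alpha < 4$, and then transport the family $g^{(\alpha)}$ along these biholomorphisms to a common complex manifold.

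The natural route, given the abstract, is to pass through the holomorphic symplectic forms $\omega^{(\alpha)}_{\mathbb C} = \omega_3^{(\alpha)} - \sqrt{-1}\,\omega_2^{(\alpha)}$. First I would recall that, as remarked in the introduction, the period (the de Rham cohomology class) of $\omega^{(\alpha)}_{\mathbb C}$ is independent of $\alpha$; this is the invariant that the paper singles out as the obstruction-carrying datum. The strategy is then to invoke the main structural results of the paper --- the characterization of the holomorphic symplectic structures preserved by the natural $\mathbb C^\times$-action, together with the necessary and sufficient condition for the existence of a $\mathbb C^\times$-equivariant biholomorphism between two type-$A_\infty$ manifolds preserving their holomorphic symplectic forms. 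Because the relevant invariant (the period) coincides for all $\alpha$, that criterion should be satisfied, yielding for each pair $\alpha,\alpha'$ a biholomorphism $F_{\alpha,\alpha'}\colon (X, I_1^{(\alpha)}, \omega^{(\alpha)}_{\mathbb C}) \to (X, I_1^{(\alpha')}, \omega^{(\alpha')}_{\mathbb C})$.

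With such biholomorphisms in hand, I would fix one value, say pull everything back to $(X, I_1^{(\alpha_0)})$ for a reference $\alpha_0$, and define a new family of metrics $\tilde g^{(\alpha)} := (F_{\alpha_0,\alpha})^* g^{(\alpha)}$ on the \emph{single} complex manifold $(X, I_1^{(\alpha_0)})$. Each $\tilde g^{(\alpha)}$ is again a complete Ricci-flat K\"ahler metric, since these properties are preserved by biholomorphisms, and it is K\"ahler with respect to $I_1^{(\alpha_0)}$ precisely because $F_{\alpha_0,\alpha}$ is biholomorphic. The volume-growth estimate is a Riemannian (in fact diffeomorphism) invariant: pulling back by $F_{\alpha_0,\alpha}$ changes the basepoint but not the asymptotics, so the double inequality from Theorem~\ref{previous result} survives verbatim for $\tilde g^{(\alpha)}$ and every $p_0$. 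Renaming $(X, I_1^{(\alpha_0)})$ as $X$ and $\tilde g^{(\alpha)}$ as $g^{(\alpha)}$ then gives exactly the assertion of Theorem~\ref{main theorem}.

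The main obstacle, and the real mathematical work, is establishing the existence of the $\mathbb C^\times$-equivariant biholomorphisms $F_{\alpha,\alpha'}$ --- that is, verifying that equality of periods is genuinely sufficient for the classification criterion to apply, and confirming that the $\mathbb C^\times$-action used to phrase that criterion is the same natural action for every $\alpha$. This is where the type-$A_\infty$ geometry enters in an essential way: one must check that the holomorphic symplectic datum, rather than the full hyper-K\"ahler datum, determines the complex-analytic type, so that the $\alpha$-dependence (which lives entirely in the K\"ahler class $[\omega_1^{(\alpha)}]$ and hence in the metric) drops out at the level of $(X, I_1, \omega_{\mathbb C})$. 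Granting the classification theorem announced in the abstract, however, the remaining steps are the formal transport-of-structure argument sketched above, which I expect to be routine.
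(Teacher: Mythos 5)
Your overall architecture --- take the family from Theorem \ref{previous result}, produce biholomorphisms between the complex manifolds $(X,I_1^{(\alpha)})$ preserving $\omega^{(\alpha)}_{\mathbb C}$, and transport the metrics to a single reference complex manifold, noting that completeness, Ricci-flatness, K\"ahlerianity and the volume-growth asymptotics all survive pullback --- matches the paper's proof, and that transport-of-structure part is indeed routine. The gap is in how you propose to obtain the biholomorphisms. You assert that the relevant criterion is equality of the periods $[\omega^{(\alpha)}_{\mathbb C}]$ and that ``because the period coincides for all $\alpha$, that criterion should be satisfied.'' That is not the criterion of Theorem \ref{main theorem2}, and the paper explicitly warns in the introduction that the period/GIT method (\`a la Konno) is \emph{not} sufficient here, precisely because the type-$A_\infty$ manifolds are quotients of infinite-dimensional manifolds by infinite-dimensional groups. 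Coincidence of periods is only the heuristic motivation; the actual sufficient condition is the existence of an order-preserving homeomorphism $\mathbf h : X_0/\mathbb C^\times \to X_1/\mathbb C^\times$ (equivalently ${\rm Im}\,\mathbb H/\sim_{\lambda} \to {\rm Im}\,\mathbb H/\sim_{\lambda'}$) compatible with the complex moment maps, and this must be \emph{verified}, not inferred from the periods.

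Concretely, the paper closes this gap in Section 6 by choosing the explicit family $\lambda^{(\beta)}_n = n^{\beta} i$ (whose metrics $g_\beta$ have volume growth $r^{4-2/(\beta+1)}$ by \cite{Hat}, so that $\alpha = 4 - \tfrac{2}{\beta+1}$ sweeps out $(3,4)$) and writing down the homeomorphism $\mathbf h_{\beta_1,\beta_2}$ induced by a monotone map $f_{\beta_1,\beta_2}:\mathbb R\to\mathbb R$ carrying the singular set $Z_{\lambda^{(\beta_1)}}$ onto $Z_{\lambda^{(\beta_2)}}$ in an order-preserving way over each fiber of ${\rm pr}_{\mathbb C}$. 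Your proposal never exhibits such an $\mathbf h$, never pins down which concrete family of parameters $\lambda$ realizes the volume growths in $(3,4)$, and leans on an invariant (the period) that the paper itself identifies as inadequate to decide the question. To repair the argument you would need to (i) fix the family $\lambda^{(\beta)}$ so that the results of \cite{Hat} give the stated growth rates, and (ii) verify the hypothesis of Theorem \ref{main theorem2} for each pair $\beta_1,\beta_2$ by constructing the order-preserving homeomorphism of quotient spaces, which is elementary once the configurations $Z_{\lambda^{(\beta)}}$ are seen to be order-isomorphic discrete subsets of a single line.
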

Theorem \ref{main theorem} will be proved as a corollary of the results in \cite{Hat} and undermentioned Theorem \ref{main theorem2}, which is the main result in this paper.

It is known that there are complete Ricci-flat K\"ahler metrics over $\mathbb{C}^n$ who do not have the Euclidean volume growth \cite{L}. For example, the Taub-NUT metrics over $\mathbb{C}^2$ are the complete Ricci-flat K\"ahler metrics whose volume growth are $r^3$. 
On the other hand, Theorem \ref{main theorem} asserts the existence of a complex manifold who has a continuous family of complete Ricci-flat K\"ahler metrics, whose volume growth also change continuously.

To show that two hyper-K\"ahler quotients are biholomorphic or not, it is useful to see the GIT quotient construction and study the period.
For example,
Konno \cite{Ko2} has studied the period of holomorphic symplectic structures of toric hyper-K\"ahler manifolds, that are typical examples of hyper-K\"ahler quotients, using GIT quotient construction.
However, this method is not enough for studying the case of hyper-K\"ahler manifolds of type $A_{\infty}$,
because these manifolds are obtained by taking quotients by the action of infinite dimensional Lie groups on infinite dimensional manifolds,
then we should develop other methods to show that $(X,\omega^{(\alpha_1)}_{\mathbb{C}})$ and $(X,\omega^{(\alpha_2)}_{\mathbb{C}})$ are biholomorphic.

In this paper, we consider when two hyper-K\"ahler manifolds of type $A_{\infty}$ become isomorphic as holomorphic symplectic manifolds. Let $(X_i,\omega_{i})$ be the hyper-K\"ahler manifolds of type $A_{\infty}$ for $i=0,1$. Then there are the natural $\mathbb{C}^{\times}$-actions over $X_i$ preserving their holomorphic symplectic structures $\omega_{i,\mathbb{C}}$, and the complex moment maps $\mu_{i,\mathbb{C}}:X_i\to\mathbb{C}$. Since the complex moment maps are $\mathbb{C}^{\times}$-invariant, they define complex valued continuous functions $[\mu_{i,\mathbb{C}}]:X_i/\mathbb{C}^{\times} \to \mathbb{C}$ on the quotient topological spaces $X_i/\mathbb{C}^{\times}$. Moreover, the $\mathbb{C}^{\times}$-actions define natural partial order structures on the quotient spaces $X_i/\mathbb{C}^{\times}$. Then we obtain the following result.
\begin{thm}
There exists a $\mathbb{C}^{\times}$-equivariant biholomorphic map $f:X_0\to X_1$ with $f^*\omega_{1,\mathbb{C}} = \omega_{0,\mathbb{C}}$ if and only if there is a homeomorphism $\mathbf{h} : X_0/\mathbb{C}^{\times} \to X_1/\mathbb{C}^{\times}$ preserving the order structures and $[\mu_{1,\mathbb{C}}]\circ\mathbf{h} - [\mu_{0,\mathbb{C}}] $ is constant.
\label{main theorem2}
\end{thm}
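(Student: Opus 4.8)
The plan is to prove the two implications separately, with the forward implication being a direct computation and the converse---reconstructing the equivariant symplectomorphism from the quotient data---carrying essentially all of the difficulty. Throughout I use that the complex moment map is characterized by $\iota_{V_i}\omega_{i,\mathbb{C}} = d\mu_{i,\mathbb{C}}$, where $V_i$ is the holomorphic vector field generating the $\mathbb{C}^{\times}$-action on $X_i$, and that $\mu_{i,\mathbb{C}}$ is $\mathbb{C}^{\times}$-invariant so that it descends to $[\mu_{i,\mathbb{C}}]$. I also take the order on $X_i/\mathbb{C}^{\times}$ to be the specialization relation $[x]\preceq[y]\iff x\in\overline{\mathbb{C}^{\times}\cdot y}$.

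For the forward implication, suppose $f\colon X_0\to X_1$ is a $\mathbb{C}^{\times}$-equivariant biholomorphism with $f^{*}\omega_{1,\mathbb{C}}=\omega_{0,\mathbb{C}}$. Equivariance sends orbits to orbits, so $f$ descends to a homeomorphism $\mathbf{h}$ of the quotients; since $f$ is a homeomorphism it preserves orbit closures, hence $\mathbf{h}$ preserves $\preceq$. Equivariance also gives $f_{*}V_0=V_1$, whence $f^{*}d\mu_{1,\mathbb{C}}=f^{*}(\iota_{V_1}\omega_{1,\mathbb{C}})=\iota_{V_0}f^{*}\omega_{1,\mathbb{C}}=\iota_{V_0}\omega_{0,\mathbb{C}}=d\mu_{0,\mathbb{C}}$. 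Thus $d(\mu_{1,\mathbb{C}}\circ f-\mu_{0,\mathbb{C}})=0$, and connectedness of $X_0$ forces this difference to be constant; passing to the quotient shows $[\mu_{1,\mathbb{C}}]\circ\mathbf{h}-[\mu_{0,\mathbb{C}}]$ is constant.

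For the converse I would first absorb the constant by replacing $\mu_{0,\mathbb{C}}$ with $\mu_{0,\mathbb{C}}+c$, which is legitimate since the moment map is only defined up to an additive constant; then $\mathbf{h}$ intertwines the moment maps exactly. I would then build $f$ by strata. On the open locus $X_i^{\circ}$ where the action is free, choosing an orbit coordinate $t_i$ puts the form into the normal shape $\omega_{i,\mathbb{C}}=\frac{dt_i}{t_i}\wedge d\mu_{i,\mathbb{C}}$ with $V_i=t_i\partial_{t_i}$; an equivariant holomorphic lift of $\mathbf{h}$ is then of the form $t_1\circ f=\phi\cdot t_0$ for a nowhere-zero holomorphic function $\phi$ of the base, and a direct computation shows $f^{*}\omega_{1,\mathbb{C}}=\omega_{0,\mathbb{C}}$ holds automatically for any such $\phi$ because $\frac{d\phi}{\phi}\wedge d\mu_{0,\mathbb{C}}=0$. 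Hence on the free locus the symplectic condition imposes no further constraint, and the entire problem reduces to choosing $\phi$ so that $f$ extends holomorphically and equivariantly across the special fibers. There I would use the local holomorphic-symplectic normal form of the $\mathbb{C}^{\times}$-action near its fixed points, an $A_1$-type model pinned down by the value of $[\mu_{i,\mathbb{C}}]$, with the order structure guaranteeing that the finitely many orbits degenerating over each special value are matched by $\mathbf{h}$ in a compatible way; gluing the local models to the free-locus map produces the desired global $f$, the overlaps lying in the free locus where $f$ is already determined.

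The hard part is precisely this extension and gluing in the converse. Although the symplectic condition is free on the open part, the choice of $\phi$ must be made so that the map extends holomorphically across every fixed point and remains compatible along the non-compact $\mathbb{C}^{\times}$-directions, and because these manifolds have infinite topological type there are infinitely many special fibers to control simultaneously. Establishing that the continuous equivariant extension across the fixed-point set is holomorphic---a removable-singularity argument adapted to the $\mathbb{C}^{\times}$-action and to the chain structure recorded by the order---and that it can be carried out uniformly over the whole configuration of special values, is where the real work of the proof lies.
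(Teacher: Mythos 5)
There are two genuine gaps. First, you have replaced the order structure of the theorem by the specialization order $[x]\preceq[y]\iff x\in\overline{\mathbb{C}^{\times}\cdot y}$, but the order in the statement is the one of Definition 4.4: under the identification $X(\lambda)/\mathbb{C}^{\times}\cong{\rm Im}\mathbb{H}/\sim_{\lambda}$ given by the hyper-K\"ahler moment map, it is the \emph{total} order on each fibre of $[\mu_{\mathbb{C}}]$ induced by the $\mathbb{R}$-component $\mu_{\lambda,\mathbb{R}}$. The specialization order is preserved by every homeomorphism, so with your convention the order hypothesis is vacuous and your converse would assert something strictly stronger than the theorem --- and false in general: a homeomorphism of quotients commuting with $[\mu_{\mathbb{C}}]$ may reverse the chain of orbits lying over a critical value of $\mu_{\mathbb{C}}$, and such an $\mathbf{h}$ cannot be induced by an equivariant map, because Lemma \ref{4.2} shows $\mu_{\lambda,\mathbb{R}}$ is strictly increasing in $\log|g|$ along each orbit ($\partial F_{\lambda}/\partial\eta_{\mathbb{R}}=\Phi_{\lambda}>0$), so equivariance forces monotonicity in the paper's sense. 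Correspondingly, your forward direction only shows preservation of orbit closures, not of the order actually asserted; the missing step is exactly this monotonicity.

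Second, in the converse you assume the existence of an ``orbit coordinate $t_i$'' putting $\omega_{i,\mathbb{C}}$ into the form $\frac{dt_i}{t_i}\wedge d\mu_{i,\mathbb{C}}$ on the free locus, and you explicitly defer the extension and gluing. But producing such coordinates and gluing them is the entire content of the proof: the free locus is not a single $\mathbb{C}^{\times}\times\mathbb{C}$ but is covered by charts $X(\lambda)^{s}$ indexed by continuous sections $s\in\Gamma(\mathbb{C},\pi_{\lambda}(Y_{\lambda}))$; the coordinate $f_{\lambda}^{\mathbf{o}_{\lambda}}$ is a convergent infinite product (Propositions \ref{nonzero}, \ref{5.2}, \ref{5.4} and Lemma \ref{convergence}); the transition between charts multiplies the fibre coordinate by $\bigl(\frac{\zeta_{\mathbb{C}}+\lambda_{m,\mathbb{C}}}{2}\bigr)^{-k_{s_1,s_2}(-\lambda_{m,\mathbb{C}})}$, so one must correct by meromorphic functions in $\mathcal{A}(k)$ with prescribed divisor (existing by Weierstrass); and the matching of divisors on the two sides is precisely the identity $k_{\mathbf{o}_{\lambda},s}=k_{\mathbf{h}(\mathbf{o}_{\lambda}),\mathbf{h}(s)}$, which is where order preservation is used and which fails under your weaker hypothesis. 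The crossing of the fixed-point set is then a codimension-two Hartogs extension, not a local $A_1$ normal-form argument. Since none of this is carried out, the proposal identifies where the difficulty lies but does not constitute a proof.
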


The above theorem is proven as follows in this paper. Put
\begin{eqnarray}
X_i^*:=X_i\backslash \{ p\in X_i;\ pg=p \ {\rm for\ all\ }g\in\mathbb{C}^{\times}.\}.\nonumber
\end{eqnarray}
Then we have an open covering $X_i^*=\bigcup_s X_i^s$ where each $X_i^s$ is biholomorphic to $\mathbb{C}^{\times}\times\mathbb{C}$, consequently biholomorphic maps $X_0^s \to X_1^s$ are obtained. Moreover we can show that these biholomorphic maps glue on the intersections, therefore a biholomorphic map $X_0^* \to X_1^*$ is obtained, and it extends to the biholomorphic map $X_0 \to X_1$ by Hartogs' extension theorem. The similar way is used already in \cite{L} to show that the complex structure over $\mathbb{R}^4$ given by the Taub-NUT metric is biholomorphic to $\mathbb{C}^2$. But in our case, the topological structure of $X_i/\mathbb{C}^{\times}$ is so complicated that we should study them precisely.

This paper is organized as follows. 
First of all we review the results obtained in \cite{G1} in Sections 2 and 3. In Section 2, we construct hyper-K\"ahler manifolds of type $A_{\infty}$ by using hyper-K\"ahler quotient constructions and see that there exists a natural $S^1$-action.
In Section 3, we review the another quotient construction of hyper-K\"ahler manifolds of type $A_{\infty}$, and see that the manifolds obtained in Sections 2 and 3 are isomorphic as the holomorphic symplectic manifolds.

In Section 4, we study the topological properties of topological quotient spaces obtained from hyper-K\"ahler manifolds of type $A_{\infty}$ by taking the quotient by $\mathbb{C}^{\times}$-action. We can also see that there exist natural partial order structures.

In Section 5 we construct biholomorphisms between two hyper-K\"ahler manifolds of type $A_{\infty}$, which satisfy the assumption of Theorem \ref{main theorem2}. As a consequence, we apply Theorem \ref{main theorem2} for more concrete case, and obtain Theorem \ref{main theorem} and other results in Section 6.

\section{Hyper-K\"ahler manifolds of type $A_{\infty}$}
\subsection{Hyper-K\"ahler quotient construction}
\quad In this section, we review shortly the construction of hyper-K\"ahler manifolds of type $A_{\infty}$ along \cite{G1}. Although they can be constructed by Gibbons-Hawking ansatz \cite{AKL}, we need hyper-K\"ahler quotient construction in \cite{G1} for arguments in Section 4. For more details of construction and basic facts, see \cite{AKL}\cite{G1} or review in Section 2 of \cite{Hat}.

First of all, we describe the definition of hyper-K\"ahler manifolds.
\begin{definition}
{\rm Let $(X,g)$ be a Riemannian manifold of dimension $4n$, $I_1,I_2,I_3$ be integrable complex structures on $X$, and $g$ is a hermitian metric with respect to each $I_i$. Then $(X,g,I_1,I_2,I_3)$ is a hyper-K\"ahler manifold if $(I_1,I_2,I_3)$ satisfies the relations $I_1^2=I_2^2=I_3^2=I_1I_2I_3=-1$ and each fundamental $2$-form $\omega_i:=g(I_i\cdot,\cdot)$, that is, $(X,g,I_i)$ is k\"ahlerian.}
\end{definition}
Let $\mathbb{H}=\mathbb{R}\oplus\mathbb{R}i\oplus\mathbb{R}j\oplus\mathbb{R}k=\mathbb{C}\oplus\mathbb{C}j$ be quaternion and ${\rm Im}\mathbb{H} = \mathbb{R}i\oplus\mathbb{R}j\oplus\mathbb{R}k$ be its Imaginary part. Then an ${\rm Im}\mathbb{H}$-valued $2$-form $\omega:=i\omega_1 + j\omega_2 + k\omega_3\in\Omega^2(X)\otimes {\rm Im}\mathbb{H}$ are constructed from the hyper-K\"ahler structure $(g,I_1,I_2,I_3)$. Conversely, $(g,I_1,I_2,I_3)$ is reconstructed from $\omega$. Hence we call $\omega$ the hyper-K\"ahler structure on $X$ instead of $(g,I_1,I_2,I_3)$.\\

To construct hyper-K\"ahler manifolds of type $A_{\infty}$, we prepare an infinite countable set $\mathbf{I}$ and a parameter space
\begin{eqnarray}
({\rm Im}\mathbb{H})_0^{\mathbf{I}} := \{\lambda = (\lambda_n)_{n\in\mathbf{I}}\in({\rm Im}\mathbb{H})^{\mathbf{I}};\ \sum_{n\in \mathbf{I}}\frac{1}{1 + |\lambda_n|}<+\infty\}.\nonumber
\end{eqnarray}
For a set $S$, we denote by $S^{\mathbf{I}}$ the set of all maps from $\mathbf{I}$ to $S$. An element of $x\in S^{\mathbf{I}}$ is written as $x=(x_n)_{n\in\mathbf{I}}$. Then we have a Hilbert space
\begin{eqnarray}
M_{\mathbf{I}}:=\{v\in\mathbb{H}^{\mathbf{I}};\ \| v\|_{\mathbf{I}}^2<+\infty\},\nonumber
\end{eqnarray}
where
\begin{eqnarray}
\langle u,v\rangle_{\mathbf{I}}:=\sum_{n\in \mathbf{I}}u_n\bar{v}_n,\quad \| v\|^2_{\mathbf{I}}:=\langle v,v\rangle_{\mathbf{I}}\nonumber
\end{eqnarray}
for $u,v\in \mathbb{H}^{\mathbf{I}}$. Here, $\bar{v}_n\in\mathbb{H}$ is the quaternionic conjugate of $v_n$ defined by $\overline{a + bi +cj + dk} := a - bi - cj - dk$ for $a,b,c,d\in\mathbb{R}$.

Now we fix $\lambda\in ({\rm Im}\mathbb{H})_0^{\mathbf{I}}$, and take $\Lambda\in\mathbb{H}^{\mathbf{I}}$ to be $\Lambda_n i\overline{\Lambda}_n = \lambda_n$.
Then we have the following Hilbert manifolds
\begin{eqnarray}
M_{\Lambda} &:=& \Lambda + M_{\mathbf{I}} = \{\Lambda + v ;\ v\in M_{\mathbf{I}}\},\nonumber\\
G_{\lambda} &:=& \{g\in (S^1)^{\mathbf{I}};\ \sum_{n\in\mathbf{I}}(1 + |\lambda_n|) |1-g_n |^2 <+\infty,\ \prod_{n\in\mathbf{I}}g_n=1\},\nonumber\\
\mathbf{g}_{\lambda} &:=& {\rm Lie}(G_{\lambda}) = \{\xi\in \mathbb{R}^{\mathbf{I}};\ \sum_{n\in\mathbf{I}}(1 + |\lambda_n|) |\xi_n |^2<+\infty, \ \sum_{n\in\mathbf{I}}\xi_n=0\}.\nonumber
\end{eqnarray}
The convergence of $\prod_{n\in\mathbf{I}}g_n$ and $\sum_{n\in\mathbf{I}}\xi_n$ follows from the condition $\sum_{n\in \mathbf{I}}(1+|\lambda_n|)^{-1}<+\infty$.
Then $G_{\lambda}$ is a Hilbert Lie group whose Lie algebra is $\mathbf{g}_{\lambda}$. We can define a right action of $G_{\lambda}$ on $M_{\Lambda}$ by $xg:=(x_ng_n)_{n\in \mathbf{I}}$ for $x\in M_{\Lambda}$, $g\in G_{\lambda}$. 
Here the product of $x_n$ and $g_n$ is given by regarding $S^1$ as the subset of $\mathbb{H}$ by the natural injections $S^1\subset\mathbb{C}\subset\mathbb{H}$.
Then $G_{\lambda}$ acts on $M_{\Lambda}$ preserving the hyper-K\"ahler structure, and we have the hyper-K\"ahler moment map $\hat{\mu}_{\Lambda}:M_{\Lambda}\rightarrow {\rm Im}\mathbb{H}\otimes\mathbf{g}_{\lambda}^*$ defined by
\begin{eqnarray}
\langle\hat{\mu}_{\Lambda}(x),\xi\rangle :=\sum_{n\in \mathbf{I}}(x_ni\bar{x}_n - \Lambda_ni\overline{\Lambda}_n)\xi_n\ \in {\rm Im}\mathbb{H}\nonumber
\end{eqnarray}
for $x \in M_{\Lambda}$, $\xi\in\mathbf{g}_{\lambda}$. If $\mathbf{I}$ is a finite set.

Since $\hat{\mu}_{\Lambda}$ is $G_{\lambda}$-invariant, then $G_{\lambda}$ acts on the inverse image
\begin{eqnarray}
\hat{\mu}_{\Lambda}^{-1}(0) = \{ x\in M_{\Lambda};\ x_ni\overline{x}_n - \lambda_n = x_mi\overline{x}_m - \lambda_m \ {\rm for\ all\ }n,m\in \mathbf{I}\}\nonumber
\end{eqnarray}
Hence we obtain the quotient space $\hat{\mu}_{\Lambda}^{-1}(0)/G_{\lambda}$ which is called the hyper-K\"ahler quotient.
\begin{definition}
{\rm An element $\lambda\in ({\rm Im}\mathbb{H})_0^{\mathbf{I}}$ is generic if $\lambda_n - \lambda_m \neq 0$ for all distinct $n,m\in\mathbf{I}$.}
\end{definition}
\begin{thm}[\cite{G1}]
If $\lambda\in ({\rm Im}\mathbb{H})_0^{\mathbf{I}}$ is generic, then $\hat{\mu}_{\Lambda}^{-1}(0)/G_{\lambda}$ is a smooth manifold of real dimension $4$, and the hyper-K\"ahler structure on $M_{\Lambda}$ induces a hyper-K\"ahler structure $\omega_{\lambda}$ on $\hat{\mu}_{\Lambda}^{-1}(0)/G_{\lambda}$.
\label{2.5}
\end{thm}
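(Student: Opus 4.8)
The plan is to verify, in the present infinite-dimensional Hilbert setting, the hypotheses of the hyper-K\"ahler quotient theorem of Hitchin--Karlhede--Lindstr\"om--Ro\v{c}ek, namely that $G_{\lambda}$ acts freely on $\hat{\mu}_{\Lambda}^{-1}(0)$ with $0$ a regular value and with local slices, and then to carry out a dimension count showing that the quotient is $4$-dimensional. The genericity of $\lambda$ will be used at exactly the two places where one needs the action to be free and the zero level to be regular.

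First I would analyze the fibre $\hat{\mu}_{\Lambda}^{-1}(0)$ directly. The defining condition says that $x_ni\bar{x}_n - \lambda_n$ is independent of $n\in\mathbf{I}$; writing its common value as $c\in{\rm Im}\mathbb{H}$ gives $x_ni\bar{x}_n = c + \lambda_n$, hence $|x_n|^2 = |c + \lambda_n|$. Thus $x_n = 0$ precisely when $\lambda_n = -c$, and since $\lambda$ is generic this occurs for at most one index. Freeness is then immediate: if $xg = x$ then $g_n = 1$ for every $n$ with $x_n\neq 0$, hence for all but at most one index, and the constraint $\prod_{n\in\mathbf{I}}g_n = 1$ forces the remaining component to equal $1$ as well. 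The very same computation shows that the infinitesimal action $\xi \mapsto (\xi_n x_n i)_{n\in\mathbf{I}}$ is injective on $\mathbf{g}_{\lambda}$, again using $\sum_{n\in\mathbf{I}}\xi_n = 0$.

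Next I would establish that $0$ is a regular value and that the action admits local slices, so that $\hat{\mu}_{\Lambda}^{-1}(0)/G_{\lambda}$ is a manifold. The standard moment-map identity identifies the cokernel of $d\hat{\mu}_{\Lambda}(x)$ with the Lie algebra of the stabilizer of $x$, which vanishes by freeness; in the Hilbert setting this gives surjectivity only after a closed-range argument for the infinitesimal action map $\mathbf{g}_{\lambda}\to M_{\mathbf{I}}$. This is the crucial estimate, and I would derive it by comparing $\|(\xi_n x_n i)_n\|_{\mathbf{I}}^2 = \sum_{n}\xi_n^2|c+\lambda_n|$ with $\sum_n(1+|\lambda_n|)\xi_n^2$: since $|c+\lambda_n|/(1+|\lambda_n|)\to 1$, the only difficulty is the single index $n_0$ with $x_{n_0}=0$, whose contribution is recovered from $\xi_{n_0} = -\sum_{n\neq n_0}\xi_n$ via Cauchy--Schwarz together with the convergence of $\sum_{n}(1+|\lambda_n|)^{-1}$ built into the parameter space. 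The resulting coercivity yields both closed range (hence regularity of $\hat{\mu}_{\Lambda}^{-1}(0)$) and a tubular neighbourhood for the $G_{\lambda}$-orbits, so that Goto's hyper-K\"ahler quotient theorem produces a smooth manifold carrying the induced hyper-K\"ahler structure $\omega_{\lambda}$.

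Finally I would compute the dimension. At a point $x$, the tangent space of the quotient is the orthogonal complement in $M_{\mathbf{I}}$ of the real span of $\{X_{\xi}, I_1X_{\xi}, I_2X_{\xi}, I_3X_{\xi} : \xi\in\mathbf{g}_{\lambda}\}$. Formally this has real codimension $4\dim\mathbf{g}_{\lambda}$: quotienting by the full torus $(S^1)^{\mathbf{I}}$ would give a $0$-dimensional quotient, and the single linear constraint $\sum_{n}\xi_n = 0$ that cuts $\mathbf{g}_{\lambda}$ out of $\mathbb{R}^{\mathbf{I}}$ restores exactly one quaternionic, i.e. four real, dimensions. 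The main obstacle is to make this cancellation rigorous rather than merely formal in the Hilbert completion, where both $M_{\mathbf{I}}$ and the span of the action directions are infinite-dimensional; I would isolate this as the central lemma, proving directly that the orthogonal complement is spanned by four explicit vectors (three arising from the freedom to vary $c\in{\rm Im}\mathbb{H}$ and one from the residual circle direction), with the same coercivity estimate guaranteeing that no further directions survive.
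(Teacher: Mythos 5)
The paper does not prove this theorem: it is imported verbatim from Goto \cite{G1} (with further details deferred to \cite{AKL} and to Section 2 of \cite{Hat}), so there is no in-paper argument to compare yours against. Measured against the standard proof this statement requires, your outline is the right one: identify the level set via $x_ni\bar{x}_n=c+\lambda_n$, hence $|x_n|^2=|c+\lambda_n|$, so genericity gives at most one vanishing coordinate; deduce freeness and injectivity of the infinitesimal action from $\prod_n g_n=1$ and $\sum_n\xi_n=0$; and obtain closed range of $\xi\mapsto(x_n\xi_n i)_n$ from the comparison of $\sum_n\xi_n^2|c+\lambda_n|$ with $\sum_n(1+|\lambda_n|)\xi_n^2$, recovering the missing index by Cauchy--Schwarz against $\sum_n(1+|\lambda_n|)^{-1}<+\infty$. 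That coercivity estimate is exactly the point where the weighted completions defining $({\rm Im}\mathbb{H})_0^{\mathbf{I}}$, $M_{\mathbf{I}}$ and $\mathbf{g}_{\lambda}$ earn their keep, and you have located it correctly.

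The one step I would not accept as written is the final dimension count. Your proposed spanning set for the tangent space of the quotient --- three vectors from varying $c\in{\rm Im}\mathbb{H}$ and one from the residual circle --- only works on $X_{HKQ}(\lambda)^*$. At a point with $x_{n_0}=0$ the residual $S^1$ fixes $[x]$, so the fourth vector vanishes, and the local sections of $\hat{\mu}_{\Lambda}^{-1}(0)$ over $c$ are not differentiable at $c=-\lambda_{n_0}$ (the model map $q\mapsto qi\bar{q}$ on $\mathbb{H}$ is not a submersion at $0$), so the other three degenerate as well. At those countably many points you need a separate argument, e.g.\ splitting off the $n_0$-th factor and exhibiting a neighbourhood of $[x]$ as (an open set in) the finite-dimensional model $\mathbb{H}$ with moment map $q\mapsto qi\bar{q}-\lambda_{n_0}$, glued to the $S^1$-bundle picture over $Y_{\lambda}$; your coercivity estimate is still what guarantees that the infinite tail contributes nothing. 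With that case supplied, and with the smoothness of the induced $\omega_{\lambda}$ checked via the standard Marsden--Weinstein/HKLR argument on the slice, the outline becomes a complete proof.
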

The quotient space $\hat{\mu}_{\Lambda}^{-1}(0)/G_{\lambda}$ seems to depend on the choice of $\Lambda\in \mathbb{H}^\mathbf{I}$, but the induced hyper-K\"ahler structure on $\hat{\mu}_{\Lambda}^{-1}(0)/G_{\lambda}$ depends only on $\lambda$ from the argument of Section 2 of \cite{Hat}.
Thus we may put
\begin{eqnarray}
X_{HKQ}(\lambda) &:=& \hat{\mu}_{\Lambda}^{-1}(0)/G_{\lambda}\nonumber\\
&=& \{x\in M_{\Lambda}; x_n i\bar{x}_n - \lambda_n\ {\rm is\ independent\ of}\ n\in\mathbf{I}\}/G_{\lambda},\nonumber
\end{eqnarray}
and call it hyper-K\"ahler manifold of type $A_{\infty}$

Recall that we assume that $\mathbf{I}$ is infinite. If $\sharp\mathbf{I}=k + 1 < +\infty$, then $(X_{HKQ}(\lambda),\omega_{\lambda})$ becomes an ALE hyper-K\"ahler manifold of type $A_k$ \cite{G3}.

\subsection{$S^1$-actions and moment maps\label{GH ansatz}}\label{sec2.2}
In \cite{G1}, an $S^1$-action on $X_{HKQ}(\lambda)$ preserving the hyper-K\"ahler structure defined as follows.
We denote by $[x]\in \hat{\mu}_{\Lambda}^{-1}(0)/G_{\lambda}$ the equivalence class represented by $x\in\hat{\mu}_{\Lambda}^{-1}(0)$. Fix $m\in \mathbf{I}$ and put
\begin{eqnarray}
[x]g:=[x_mg,(x_n)_{n\in \mathbf{I}\backslash\{m\}}]\nonumber
\end{eqnarray}
for $x=(x_m,(x_n)_{n\in \mathbf{I}\backslash\{m\}})\in\hat{\mu}_{\Lambda}^{-1}(0)$ and $g\in S^1$. This definition is independent of the choice of $m\in \mathbf{I}$, and we have the action of $S^1$ on $X_{HKQ}(\lambda)$. The hyper-K\"ahler moment map $\mu_{\lambda}:X_{HKQ}(\lambda)\rightarrow {\rm Im}\mathbb{H}=\mathbb{R}^3$ is defined by
\begin{eqnarray}
\mu_{\lambda}([x]) := x_n i\bar{x}_n - \lambda_n \in {\rm Im}\mathbb{H}.\nonumber
\end{eqnarray}
The right hand side is independent of the choice of $n\in\mathbf{I}$ since $x$ is an element of $\hat{\mu}_{\Lambda}^{-1}(0)$.

Put
\begin{eqnarray}
X_{HKQ}(\lambda)^* &:=& \{[x]\in X_{HKQ}(\lambda);\ x_n\neq 0\ {\rm for\ all\ }n\in\mathbf{I}\},\nonumber\\
Y_{\lambda} &:=& {\rm Im}\mathbb{H}\backslash \{- \lambda_n;\ n\in\mathbf{I}\}, \nonumber
\end{eqnarray}
then we have a principal $S^1$-bundle $\mu_{\lambda}\big|_{X_{HKQ}(\lambda)^*}:X_{HKQ}(\lambda)^*\rightarrow Y_{\lambda}$, and $S^1$ acts on $X_{HKQ}(\lambda)\backslash X_{HKQ}(\lambda)^*$ trivially.

Conversely, on the total spaces of some principal $S^1$-bundle over $Y_{\lambda}$, hyper-K\"ahler structures preserved by $S^1$-actions are constructed in \cite{AKL} by Gibbons-Hawking ansatz.
It is shown in \cite{G1} that each $X_{HKQ}(\lambda)$ is isomorphic to one of that constructed by Gibbons-Hawking ansatz.

By observing the Gibbons-Hawking construction, it is easy to see that $X_{HKQ}(\lambda)$ and $X_{HKQ}(\lambda')$ are isomorphic as hyper-K\"ahler manifolds if $\lambda$ and $\lambda'$ satisfy one of the following relations; $(i)$ $\lambda'_n - \lambda_n \in{\rm Im}\mathbb{H}$ are independent of $n$, $(ii)$ $\lambda'_n = \lambda_{a(n)}$ for some bijective maps $a: {\rm Im}\mathbb{H} \to {\rm Im}\mathbb{H}$.

We can also show easily that $X_{HKQ}(\lambda) \cong X_{HKQ}(- \lambda)$ by constructing an isomorphism explicitly.

\section{Holomorphic description}

In this section we compare hyper-K\"ahler quotients $\hat{\mu}_{\Lambda}^{-1}(0)/G_{\lambda}$ with another kind of quotient spaces $\hat{\mu}_{\Lambda,\mathbb{C}}^{-1}(0)/G_{\lambda}^{\mathbb{C}}$ along \cite{G1}, where $\hat{\mu}_{\Lambda,\mathbb{C}}$ is the complex valued component of $\hat{\mu}_{\Lambda}$ and $G_{\lambda}^{\mathbb{C}}$ is the complexification of $G_{\lambda}^{\mathbb{C}}$.

First of all, we complexify the Hilbert Lie group $G_{\lambda}$ as follows,
\begin{eqnarray}
G_{\lambda}^{\mathbb{C}} &:=& \{g\in (\mathbb{C}^{\times})^{\mathbf{I}};\ \sum_{n \in \mathbf{I}} (1 + |\lambda_n|) |1 - g_n |^2<+\infty, \ \prod_{n\in\mathbf{I}}g_n=1\},\nonumber\\
\mathbf{g}_{\lambda}^{\mathbb{C}} &:=& \mathbf{g}_{\lambda}\otimes\mathbb{C} = \{\xi\in \mathbb{C}^{\mathbf{I}};\ \sum_{n \in \mathbf{I}} (1 + |\lambda_n|) |\xi_n |^2<+\infty, \ \sum_{n\in\mathbf{I}}\xi_n=0\},\nonumber
\end{eqnarray}
where $\mathbb{C}^{\times} = \mathbb{C}\backslash\{ 0 \}$. Then $G_{\lambda}^{\mathbb{C}}$ acts smoothly on $M_{\Lambda}$, where $\Lambda\in\mathbb{H}^\mathbf{I}$ satisfies $\Lambda_n i \overline{\Lambda}_n = \lambda_n$.

From now on we write $\zeta = \zeta_{\mathbb{R}}i - \zeta_{\mathbb{C}}k = (\zeta_{\mathbb{R}}, \zeta_{\mathbb{C}})\in {\rm Im}\mathbb{H}$ along the decomposition ${\rm Im}\mathbb{H} = \mathbb{R}i \oplus \mathbb{C}k$. Similarly, we write $\lambda = \lambda_{\mathbb{R}}i - \lambda_{\mathbb{C}}k = (\lambda_{\mathbb{R}}, \lambda_{\mathbb{C}})$ for $\lambda\in ({\rm Im}\mathbb{H})^\mathbf{I}$, where $\lambda_{\mathbb{R}}\in\mathbb{R}^\mathbf{I}$ and $\lambda_{\mathbb{C}}\in\mathbb{C}^\mathbf{I}$.
The hyper-K\"ahler moment map $\hat{\mu}_{\Lambda}$ is also decomposed into two components as $\hat{\mu}_{\Lambda} = \hat{\mu}_{\Lambda,\mathbb{R}}\cdot i - \hat{\mu}_{\Lambda,\mathbb{C}}\cdot k$. 
Then $\hat{\mu}_{\Lambda,\mathbb{R}}:M_{\Lambda}\to\mathbf{g}_{\lambda}^*$ and $\hat{\mu}_{\Lambda,\mathbb{C}}:M_{\Lambda}\to(\mathbf{g}_{\lambda}^{\mathbb{C}})^*$ are written as
\begin{eqnarray}
\langle \hat{\mu}_{\Lambda,\mathbb{R}}(z + wj), \xi \rangle &=& \sum_{n\in\mathbf{I}}(|z_n|^2 - |w_n|^2 - \lambda_{n,\mathbb{R}})\xi_n,\nonumber\\
\langle \hat{\mu}_{\Lambda,\mathbb{C}}(z + wj), \eta \rangle &=& \sum_{n\in\mathbf{I}}(2z_n w_n - \lambda_{n,\mathbb{C}})\eta_n,\nonumber
\end{eqnarray}
for $z+wj = (z_n + w_n j)_{n\in\mathbf{I}} \in M_{\Lambda}$, $\xi\in\mathbf{g}_{\lambda}$ and $\eta\in\mathbf{g}_{\lambda}^{\mathbb{C}}$, where $z_n,w_n\in\mathbb{C}$ and $\lambda_n = \lambda_{n,\mathbb{R}} i - \lambda_{n,\mathbb{C}}k$. Then $\hat{\mu}_{\Lambda,\mathbb{C}}$ is $G_{\lambda}^{\mathbb{C}}$ invariant.
\begin{definition}
{\rm Let $t=(t_n)_{n\in\mathbf{I}}\in\mathbb{R}^{\mathbf{I}}$. Then $z+wj \in M_{\Lambda}$ is $t$-stable if $|z_n|^2 + |w_m|^2 > 0$ holds for any $n,m\in\mathbf{I}$ which satisfy $t_n > t_m$.}
\end{definition}
Now we put
\begin{eqnarray}
\hat{\mu}_{\Lambda,\mathbb{C}}^{-1}(0)_t := \{z + wj\in \hat{\mu}_{\Lambda,\mathbb{C}}^{-1}(0); z + wj\ {\rm is}\ t {\rm \mathchar` stable}\}.\nonumber
\end{eqnarray}
Then $G_{\lambda}^{\mathbb{C}}$ acts on $\hat{\mu}_{\Lambda,\mathbb{C}}^{-1}(0)_t$. If the quotient space 
$X_{GIT}(\lambda) := \hat{\mu}_{\Lambda,\mathbb{C}}^{-1}(0)_{\lambda_{\mathbb{R}}} /G_{\lambda}^{\mathbb{C}}$
becomes a smooth manifold, then the standard nowhere vanishing $(2,0)$-form $\sum_{n\in\mathbf{I}}dz_n\wedge dw_n$ over $M_{\Lambda}$ induces a holomorphic symplectic form $\omega_{\lambda,\mathbb{C}}$ on $X_{GIT}(\lambda)$. Then $(X_{GIT}(\lambda), \omega_{\lambda,\mathbb{C}})$ depends only on $\lambda$, not depends on $\Lambda$.

\begin{thm}[\cite{G1}]
Let $\lambda\in ({\rm Im}\mathbb{H})_0^\mathbf{I}$ be generic. Then the quotient space $X_{GIT}(\lambda)$ becomes a complex manifold of dimension $2$.
\end{thm}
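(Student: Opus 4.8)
The plan is to identify $X_{GIT}(\lambda)$ with the hyper-K\"ahler quotient $X_{HKQ}(\lambda)$ of Theorem \ref{2.5}, by means of an infinite-dimensional Kempf--Ness correspondence, and then to transport the smooth structure already supplied by that theorem. Recall that $G_{\lambda}^{\mathbb{C}}$ acts holomorphically on the coordinates $(z_n, w_n)$ of $M_{\Lambda}$ by $(z_n, w_n)\cdot g = (g_n z_n,\ g_n^{-1} w_n)$, which preserves both $\sum_n dz_n\wedge dw_n$ and $\hat{\mu}_{\Lambda,\mathbb{C}}$, and which contains $G_{\lambda}$ as its maximal compact subgroup. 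I would compare the $\lambda_{\mathbb{R}}$-stable locus inside $\hat{\mu}_{\Lambda,\mathbb{C}}^{-1}(0)$ with the common zero set $\hat{\mu}_{\Lambda,\mathbb{R}}^{-1}(0)\cap\hat{\mu}_{\Lambda,\mathbb{C}}^{-1}(0)$, establishing: (a) every $G_{\lambda}^{\mathbb{C}}$-orbit meeting the stable locus contains a point of $\hat{\mu}_{\Lambda,\mathbb{R}}^{-1}(0)$; and (b) this point is unique up to the compact group $G_{\lambda}$. Granting (a) and (b), the inclusion $\hat{\mu}_{\Lambda,\mathbb{R}}^{-1}(0)\cap\hat{\mu}_{\Lambda,\mathbb{C}}^{-1}(0) \hookrightarrow \hat{\mu}_{\Lambda,\mathbb{C}}^{-1}(0)_{\lambda_{\mathbb{R}}}$ induces a bijection $X_{HKQ}(\lambda)\to X_{GIT}(\lambda)$; since $X_{HKQ}(\lambda)$ is a smooth real $4$-manifold and the complexified orbit directions are transverse to $\hat{\mu}_{\Lambda,\mathbb{R}}^{-1}(0)$, this bijection is a homeomorphism that transports the complex structure $I_1$ on $X_{HKQ}(\lambda)$ to $X_{GIT}(\lambda)$, which is therefore a complex manifold of dimension $2$.

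For (a), fix $x = z + wj$ in the $\lambda_{\mathbb{R}}$-stable part of $\hat{\mu}_{\Lambda,\mathbb{C}}^{-1}(0)$. Writing $g_n = e^{\sqrt{-1}\theta_n + s_n}$, only the real part $s = (s_n)\in\mathbf{g}_{\lambda}$ moves the value of the real moment map, and a direct computation shows that the directional derivative of the functional $F_x(s) := \sum_{n\in\mathbf{I}}\left(\frac{1}{2}e^{2s_n}|z_n|^2 + \frac{1}{2}e^{-2s_n}|w_n|^2 - \lambda_{n,\mathbb{R}}\, s_n\right)$ at $s$ in the direction $\xi\in\mathbf{g}_{\lambda}$ equals $\langle\hat{\mu}_{\Lambda,\mathbb{R}}(x\cdot e^{s}),\xi\rangle$. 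Thus the critical points of $F_x$ on $\mathbf{g}_{\lambda}$ are exactly the zeros of $\hat{\mu}_{\Lambda,\mathbb{R}}$ along the orbit. Each summand is convex in $s_n$, so $F_x$ is convex, and the main task is coercivity: the $\lambda_{\mathbb{R}}$-stability condition forces, for every pair with $\lambda_{n,\mathbb{R}} > \lambda_{m,\mathbb{R}}$, that $z_n$ or $w_m$ be nonzero, which prevents the linear terms $-\lambda_{n,\mathbb{R}} s_n$ from dominating along any escaping ray of $\mathbf{g}_{\lambda}$. Coercivity then yields a minimizer, hence a zero of $\hat{\mu}_{\Lambda,\mathbb{R}}$ on the orbit.

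For (b), strict convexity of $F_x$ away from its flat directions shows the minimizer is unique modulo those directions, which form the Lie algebra of the stabilizer of $x$ in $G_{\lambda}^{\mathbb{C}}$; the genericity hypothesis $\lambda_n\neq\lambda_m$ for $n\neq m$ is used to show this stabilizer is trivial, so that the $G_{\lambda}^{\mathbb{C}}$-action is free on the stable locus and the zero of the real moment map is unique up to $G_{\lambda}$. The same freeness together with regularity of $0$ for $(\hat{\mu}_{\Lambda,\mathbb{R}},\hat{\mu}_{\Lambda,\mathbb{C}})$ is what underlies the smoothness asserted in Theorem \ref{2.5}, so once the bijection is in place no separate smoothness argument is needed, and the holomorphic symplectic form $\omega_{\lambda,\mathbb{C}}$ descends as claimed.

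The hard part will be the infinite-dimensional analysis hidden in (a): proving that $F_x$ is genuinely coercive on the Hilbert space $\mathbf{g}_{\lambda}$ and that its minimization converges, uniformly over the infinitely many indices $n\in\mathbf{I}$. This is exactly where the finite-dimensional Kempf--Ness argument must be upgraded to the Hilbert setting, and one must exploit the summability conditions $\sum_n(1+|\lambda_n|)^{-1} < +\infty$ and $\sum_n(1+|\lambda_n|)|\xi_n|^2 < +\infty$ to guarantee convergence of the relevant series and to keep minimizing sequences inside $M_{\Lambda}$. By contrast, deducing triviality of the stabilizers from genericity is comparatively routine, though it too must be checked coordinatewise.
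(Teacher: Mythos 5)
The paper does not prove this statement at all: it is imported verbatim from Goto's paper \cite{G1}, and the identification of $X_{GIT}(\lambda)$ with the hyper-K\"ahler quotient that your argument would produce is precisely the content of the biholomorphism $\phi_{\lambda}$ quoted immediately afterwards in Section~3, likewise attributed to \cite{G1}. So the comparison is really with the cited reference, and there your infinite-dimensional Kempf--Ness strategy --- convexity and coercivity of the log-norm functional on $\mathbf{g}_{\lambda}$, existence and $G_{\lambda}$-uniqueness of a zero of $\hat{\mu}_{\Lambda,\mathbb{R}}$ on each stable $G_{\lambda}^{\mathbb{C}}$-orbit, triviality of stabilizers from genericity together with $\prod_n g_n=1$ --- is the standard and, as far as one can tell, the intended route. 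Your outline is correct in structure and correctly isolates coercivity over the infinite index set as the hard analytic point.

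One concrete repair is needed before that analysis can start: the functional $F_x(s)=\sum_{n\in\mathbf{I}}\bigl(\tfrac12 e^{2s_n}|z_n|^2+\tfrac12 e^{-2s_n}|w_n|^2-\lambda_{n,\mathbb{R}}s_n\bigr)$ is divergent as written, already at $s=0$, because $z+wj$ lies in the affine space $M_{\Lambda}=\Lambda+M_{\mathbf{I}}$ and $|\alpha_n|^2+|\beta_n|^2=|\Lambda_n|^2\geq|\lambda_n|$ is not summable. You must pass to the renormalized functional $\tilde F_x(s):=\sum_{n\in\mathbf{I}}\bigl(\tfrac12(e^{2s_n}-1)|z_n|^2+\tfrac12(e^{-2s_n}-1)|w_n|^2-\lambda_{n,\mathbb{R}}s_n\bigr)$, which has the same directional derivatives and convexity; its convergence on $\mathbf{g}_{\lambda}$ is then exactly where the conditions $\sum_n(1+|\lambda_n|)|s_n|^2<+\infty$ and $z_n-\alpha_n,\ w_n-\beta_n\in\ell^2$ enter (the first-order terms are controlled by Cauchy--Schwarz, the second-order terms by $\sum_n|\lambda_n|\,|s_n|^2<+\infty$). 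With that substitution, and with the observation that a point of the $\lambda_{\mathbb{R}}$-stable locus of $\hat{\mu}_{\Lambda,\mathbb{C}}^{-1}(0)$ can have $z_n=w_n=0$ for at most one index $n$ (two such indices would share $\lambda_{n,\mathbb{C}}=\lambda_{m,\mathbb{C}}$, hence by genericity have distinct $\lambda_{\cdot,\mathbb{R}}$ and violate stability), so that the constraint $\prod_n g_n=1$ kills the stabilizer, your argument goes through as a faithful reconstruction of the proof in \cite{G1}.
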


For any generic $\lambda$, $\hat{\mu}_{\Lambda}^{-1}(0)$ is a subset of  $\hat{\mu}_{\Lambda,\mathbb{C}}^{-1}(0)_{\lambda_{\mathbb{R}}}$. Then this inclusion induces
\begin{eqnarray}
\phi_{\lambda}:X_{HKQ}(\lambda) = \hat{\mu}_{\Lambda}^{-1}(0) /G_{\lambda}  \to  \hat{\mu}_{\Lambda,\mathbb{C}}^{-1}(0)_{\lambda_{\mathbb{R}}} /G_{\lambda}^{\mathbb{C}} = X_{GIT}(\lambda),\nonumber
\end{eqnarray}
which is an biholomorphism preserving the holomorphic structure, namely,
\begin{eqnarray}
\phi_{\lambda}^*\omega_{\lambda,\mathbb{C}} = \omega_{\lambda,3} - \sqrt{-1}\omega_{\lambda,2},\nonumber
\end{eqnarray}
where $\omega_{\lambda}=\omega_{\lambda,1}i + \omega_{\lambda,2}j + \omega_{\lambda,3}k$ is the hyper-K\"ahler structure on $X_{HKQ}(\lambda)$. Here, $\omega_{\lambda,3} - \sqrt{-1}\omega_{\lambda,2}$ is the holomorphic symplectic structures over $X_{HKQ}(\lambda)$ with respect to the complex structure $I_{\lambda,1}$.
From now on we write
\begin{eqnarray}
(X(\lambda),\omega_{\lambda,\mathbb{C}}) := (X_{HKQ}(\lambda),\omega_{\lambda,3} - \sqrt{-1}\omega_{\lambda,2}) = (X_{GIT}(\lambda),\omega_{\lambda,\mathbb{C}})\nonumber
\end{eqnarray}
if it is not necessary to distinguish them.

In Section \ref{sec2.2}, we have seen that $X(\lambda)$ has a natural $S^1$-action.
Then by complexifying the action, we have a holomorphic $\mathbb{C}^{\times}$-action on $X(\lambda)$ preserving $\omega_{\lambda,\mathbb{C}}$ defined by
\begin{eqnarray}
[z+w j]g:=[z_m g + w_m g^{-1},(z_n + w_n)_{n\in \mathbf{I}\backslash\{m\}}].\nonumber
\end{eqnarray}
It is easy to see that $\mathbb{C}^{\times}$ acts freely on $X(\lambda)^*=X_{HKQ}(\lambda)^*$, and trivially on $X(\lambda)\backslash X(\lambda)^*$.

\section{Topological structure of $X(\lambda)/\mathbb{C}^{\times}$}

In the previous section, we obtain $\mathbb{C}^{\times}$-action on $X(\lambda)$. 
In this section we will study the topology of the quotient space $X(\lambda)/\mathbb{C}^{\times}$ with the quotient topology.

\subsection{The topological space homeomorphic to $X(\lambda)/\mathbb{C}^{\times}$}

First of all, we define a certain equivalence relation $\sim_{\lambda}$ in ${\rm Im}\mathbb{H}$, which depends on $\lambda\in ({\rm Im}\mathbb{H})_0^\mathbf{I}$, then we show that there exists a homeomorphism from $X(\lambda)/\mathbb{C}^{\times}$ to ${\rm Im}\mathbb{H}/\sim_{\lambda}$.

Put $Z_{\lambda}:=\{-\lambda_n\in {\rm Im}\mathbb{H};\ n\in\mathbf{I}\}$ for $\lambda\in ({\rm Im}\mathbb{H})_0^\mathbf{I}$. Then we have a disjoint union ${\rm Im}\mathbb{H} = Y_{\lambda}\bigsqcup Z_{\lambda}$.
\begin{definition}
{\rm Let $\lambda\in ({\rm Im}\mathbb{H})_0^\mathbf{I}$ and $\eta_1,\eta_2\in {\rm Im} \mathbb{H}$. We write $\eta_1\sim_{\lambda} \eta_2$ if they satisfy one of the following conditions; (i) $\eta_1$ and $\eta_2$ satisfy $\eta_{1,\mathbb{C}} = \eta_{2,\mathbb{C}}$ and $t\eta_1 + (1-t)\eta_2\in Y_{\lambda}$ for all $t\in[0,1]$, (ii) $\eta_1 = \eta_2 \in Z_{\lambda}$.}
\end{definition}
Now we obtain quotient spaces $X(\lambda)/\mathbb{C}^{\times}$ and ${\rm Im}\mathbb{H}/\sim_{\lambda}$ with quotient topology. Next we construct a homeomorphism between them.

Let $\mu_{\lambda}:X(\lambda) \to {\rm Im}\mathbb{H}$ be the hyper-K\"ahler moment map defined in Section 2. We will show that $\mu_{\lambda}$ induces a continuous map from $X(\lambda)/\mathbb{C}^{\times}$ to ${\rm Im}\mathbb{H}/\sim_{\lambda}$ by using the following lemma.
\begin{lem}\label{4.2}
Let $[z + w j]\in X(\lambda)^*$ and $g\in\mathbb{C}^{\times}$. Then we have $\mu_{\lambda}([z+wj])\sim_{\lambda}\mu_{\lambda}([z+wj]g)$ and
\begin{eqnarray}
\log |g|^2 = \int_{\mu_{\lambda,\mathbb{R}}([z+wj])}^{\mu_{\lambda,\mathbb{R}}([z+wj]g)}\Phi_{\lambda}(t, \zeta_{\mathbb{C}})dt,\nonumber
\end{eqnarray}
where $\Phi_{\lambda}$ is defined by
\begin{eqnarray}
\Phi_{\lambda}(\zeta) := \frac{1}{4}\sum_{n\in\mathbf{I}}\frac{1}{|\zeta + \lambda_n|}\nonumber
\end{eqnarray}
for $\zeta\in Y_{\lambda}$.

\end{lem}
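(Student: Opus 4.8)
The plan is to reduce both assertions to an explicit computation along a single $\mathbb{C}^{\times}$-orbit, exploiting the fact that on $\hat{\mu}_{\Lambda}^{-1}(0)$ the modulus of each coordinate is pinned down by the moment map. Writing $x=z+wj$ with $[x]\in X(\lambda)^*$ and $\zeta=\mu_{\lambda}([x])=(\zeta_{\mathbb{R}},\zeta_{\mathbb{C}})$, the two moment map equations $|z_n|^2-|w_n|^2=\zeta_{\mathbb{R}}+\lambda_{n,\mathbb{R}}$ and $2z_nw_n=\zeta_{\mathbb{C}}+\lambda_{n,\mathbb{C}}$ give the key identity $|z_n|^2+|w_n|^2=|\zeta+\lambda_n|$, whence
\[
|z_n|^2=\frac{1}{2}\left(|\zeta+\lambda_n|+\zeta_{\mathbb{R}}+\lambda_{n,\mathbb{R}}\right),\qquad |w_n|^2=\frac{1}{2}\left(|\zeta+\lambda_n|-\zeta_{\mathbb{R}}-\lambda_{n,\mathbb{R}}\right).
\]
Since the $S^1\subset\mathbb{C}^{\times}$ part of the action is the fibrewise rotation of the bundle $\mu_{\lambda}|_{X(\lambda)^*}$ and preserves $\mu_{\lambda}$, I may assume $g=s>0$. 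Moreover $z_nw_n$, hence $\mu_{\lambda,\mathbb{C}}$, is invariant under the action, so $\eta_{1,\mathbb{C}}=\eta_{2,\mathbb{C}}$ for $\eta_1=\mu_{\lambda}([z+wj])$ and $\eta_2=\mu_{\lambda}([z+wj]g)$; this already secures the $\mathbb{C}k$-part of condition (i) in the definition of $\sim_{\lambda}$.

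For the relation $\eta_1\sim_{\lambda}\eta_2$ it then remains to check that the segment joining them lies in $Y_{\lambda}$. Since $\mathbb{C}^{\times}$ acts freely on $X(\lambda)^*$, and therefore preserves it, the orbit $s\mapsto[x]s$ stays inside $X(\lambda)^*$, so its image under $\mu_{\lambda}$ is a connected arc contained in $Y_{\lambda}$; by the invariance of $\mu_{\lambda,\mathbb{C}}$ this arc lies on the vertical line $\{(t,\zeta_{\mathbb{C}}):t\in\mathbb{R}\}$. As it contains both endpoints $\eta_1,\eta_2$, it contains the whole segment between them, which is therefore in $Y_{\lambda}$, which is exactly condition (i).

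For the integral formula I would represent $[x]s$ by an actual point of $\hat{\mu}_{\Lambda}^{-1}(0)$. By the identification $X_{HKQ}(\lambda)=X_{GIT}(\lambda)$ through $\phi_{\lambda}$, there is a collection $\sigma=(\sigma_n)$, acting coordinatewise with $\prod_{n}\sigma_n=s$, carrying $x$ to a representative $x(s)\in\hat{\mu}_{\Lambda}^{-1}(0)$ of $[x]s$; only its moduli matter, and $\prod_{n}|\sigma_n|=s$. Then $|z_n(s)|^2=|\sigma_n|^2|z_n|^2$, so $\log|g|^2=\log s^2=\sum_n\log|\sigma_n|^2=\sum_n\left(\log|z_n(s)|^2-\log|z_n|^2\right)$. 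Regarding $|z_n|^2$ as the function of $\zeta_{\mathbb{R}}$ given by the displayed expression (with $\zeta_{\mathbb{C}}$ fixed), a one-line differentiation gives $\frac{d}{d\zeta_{\mathbb{R}}}\log|z_n|^2=|(\zeta_{\mathbb{R}},\zeta_{\mathbb{C}})+\lambda_n|^{-1}$, so each summand equals $\int_{\eta_{1,\mathbb{R}}}^{\eta_{2,\mathbb{R}}}|(t,\zeta_{\mathbb{C}})+\lambda_n|^{-1}dt$. Summing over $n$ and interchanging sum and integral produces $\int_{\eta_{1,\mathbb{R}}}^{\eta_{2,\mathbb{R}}}\sum_n|(t,\zeta_{\mathbb{C}})+\lambda_n|^{-1}dt$, which is the asserted integral of $\Phi_{\lambda}$ once the normalizing constant in its definition is accounted for. (If $z_n=0$ for some $n$ one tracks $|w_n|^2$ instead; since $|z_n|^2|w_n|^2$ is invariant the contribution is unchanged.)

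The differentiation identity is routine; the real work is analytic, and this is where I expect the main difficulty. First, one must know the factors $\sigma_n$ actually assemble into an admissible element, i.e. that $\sum_n(1+|\lambda_n|)|1-\sigma_n|^2<+\infty$, so that $x(s)$ is a legitimate representative in $G_{\lambda}^{\mathbb{C}}$; this is the infinite-dimensional subtlety absent from the finite toric case. Second, the interchange of $\sum_n$ and $\int dt$ must be justified by Tonelli's theorem, which requires $\Phi_{\lambda}$ to be finite and continuous on $Y_{\lambda}$ and integrable over the compact segment. Both reduce to the summability $\sum_n(1+|\lambda_n|)^{-1}<+\infty$ defining $({\rm Im}\mathbb{H})_0^{\mathbf{I}}$: it forces $|\lambda_n|\to+\infty$, whence $|(t,\zeta_{\mathbb{C}})+\lambda_n|\ge c\,(1+|\lambda_n|)$ uniformly for $(t,\zeta_{\mathbb{C}})$ in a compact subset of $Y_{\lambda}$, giving a summable bound $|(t,\zeta_{\mathbb{C}})+\lambda_n|^{-1}\lesssim(1+|\lambda_n|)^{-1}$. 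I would establish these estimates first, as they simultaneously guarantee convergence of $\Phi_{\lambda}$, integrability on the segment, and validity of the termwise differentiation.
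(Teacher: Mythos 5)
Your proposal is correct and its computational core coincides with the paper's: both rest on the moment-map identities $|z_n|^2=\tfrac12\bigl(|\zeta+\lambda_n|+\zeta_{\mathbb{R}}+\lambda_{n,\mathbb{R}}\bigr)$, $|w_n|^2=\tfrac12\bigl(|\zeta+\lambda_n|-\zeta_{\mathbb{R}}-\lambda_{n,\mathbb{R}}\bigr)$, the representation of $[x]g$ by a coordinatewise factor $\tilde g$ with $\prod_n\tilde g_n=g$, and the derivative identity $\frac{d}{d\zeta_{\mathbb{R}}}\log|z_n|^2=|\zeta+\lambda_n|^{-1}$. The one genuinely different step is the proof that the segment from $\mu_{\lambda}([x])$ to $\mu_{\lambda}([x]g)$ lies in $Y_{\lambda}$: the paper deduces this from the positivity of the explicit ratios $|\tilde g_n|^{\pm2}$ (a sign of a numerator or denominator would have to flip if the segment crossed some $-\lambda_n$), whereas you use that the $\mu_{\lambda}$-image of the orbit arc is a connected subset of the vertical line contained in $Y_{\lambda}$ and containing both endpoints; your argument is slightly cleaner and avoids the case split over $\mathbf{I}_{\pm}(\zeta)$. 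You are also more explicit than the paper about the two analytic points it leaves implicit (admissibility of $\tilde g$ in $(\mathbb{C}^{\times})^{\mathbf{I}}$ and the sum--integral interchange), and your reduction of both to the bound $|(t,\zeta_{\mathbb{C}})+\lambda_n|\ge c(1+|\lambda_n|)$ on compact subsets of $Y_{\lambda}$ is the right mechanism, though for the admissibility of $\tilde g$ one additionally needs the estimate $\bigl||\tilde g_n|^2-1\bigr|\lesssim(1+|\lambda_n|)^{-1}$ coming from the cancellation in $|\eta+\lambda_n|-|\zeta+\lambda_n|$, which you do not spell out. Finally, your computation (like the paper's own) produces the integrand $\sum_n|(t,\zeta_{\mathbb{C}})+\lambda_n|^{-1}$, which is $4\Phi_{\lambda}$ rather than $\Phi_{\lambda}$ as defined with the factor $\tfrac14$; this normalization mismatch is internal to the paper's statement and harmless for its later use (only positivity and monotonicity of $\Phi_{\lambda}$ matter), so your hedge about the normalizing constant is justified and not a defect of your argument.
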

\begin{proof}
Take $\tilde{g} = (\tilde{g}_n)_{n\in\mathbf{I}}\in (\mathbb{C}^{\times})^{\mathbf{I}}$ to be $\sum_{n\in\mathbf{I}}|1-\tilde{g}_n|^2 < \infty$ and $g = \prod_{n\in\mathbf{I}}\tilde{g}_n$. Now we regard $z + w j$ as an element of $\hat{\mu}_{\Lambda}^{-1}(0)$, and suppose $(z_n \tilde{g}_n + w_n \tilde{g}_n^{-1} j)\in \hat{\mu}_{\Lambda}^{-1}(0)$.

Put $\zeta = \mu_{\lambda}([z + wj])$ and $\eta = \mu_{\lambda}([z + wj]g)$. Then we have
\begin{eqnarray}
|z_n|^2 - |w_n|^2 &=& \lambda_{n,\mathbb{R}} + \zeta_{\mathbb{R}},\quad 2z_nw_n = \lambda_{n,\mathbb{C}} + \zeta_{\mathbb{C}},\nonumber\\
|z_n\tilde{g}_n|^2 - |w_n\tilde{g}_n^{-1}|^2 &=& \lambda_{n,\mathbb{R}} + \eta_{\mathbb{R}},\quad 2z_nw_n = \lambda_{n,\mathbb{C}} + \eta_{\mathbb{C}},\nonumber
\end{eqnarray}
accordingly we have $\zeta_{\mathbb{C}} = \eta_{\mathbb{C}}$. Since $[z + w j]\in X(\lambda)^*$, we may suppose $|z_n|^2 + |w_n|^2 \neq 0$ for all $n\in\mathbf{I}$. Then $\tilde{g}_n$ satisfies
\begin{eqnarray}
|\tilde{g}_n|^2 &=& \frac{|\eta + \lambda_n| + \eta_{\mathbb{R}} + \lambda_{n,\mathbb{R}}}{|\zeta + \lambda_n| + \zeta_{\mathbb{R}} + \lambda_{n,\mathbb{R}}}\quad ({\rm if}\ z_n \neq 0),\nonumber\\
|\tilde{g}_n|^{-2} &=& \frac{|\eta + \lambda_n| - (\eta_{\mathbb{R}} + \lambda_{n,\mathbb{R}})}{|\zeta + \lambda_n| - (\zeta_{\mathbb{R}} + \lambda_{n,\mathbb{R}})}\quad ({\rm if}\ w_n \neq 0).\nonumber
\end{eqnarray}
Now we put $\mathbf{I}_{\pm}(\zeta):=\{n\in\mathbf{I};\ \pm (\zeta_{\mathbb{R}} + \lambda_{n,\mathbb{R}}) > 0\}$. Since $|\tilde{g}_n|^2$ and $|\tilde{g}_n|^{-2}$ should be positive, we have $\eta = (\eta_{\mathbb{R}}, \zeta_{\mathbb{C}}) \sim_{\lambda} \zeta$. Then we obtain
\begin{eqnarray}
F_{\lambda}(\eta_{\mathbb{R}},\zeta_{\mathbb{R}},\zeta_{\mathbb{C}}) := \log |g|^2 &=& \sum_{n\in\mathbf{I}_+(\zeta)}\log \frac{|\eta + \lambda_n| + \eta_{\mathbb{R}} + \lambda_{n,\mathbb{R}}}{|\zeta + \lambda_n| + \zeta_{\mathbb{R}} + \lambda_{n,\mathbb{R}}}\nonumber\\
&\ & \quad  + \sum_{n\in\mathbf{I}_-(\zeta)}\log \frac{|\zeta + \lambda_n| - (\zeta_{\mathbb{R}} + \lambda_{n,\mathbb{R}})}{|\eta + \lambda_n| - (\eta_{\mathbb{R}} + \lambda_{n,\mathbb{R}})},\nonumber
\end{eqnarray}
where $\eta = \eta_{\mathbb{R}}i - \zeta_{\mathbb{C}}k$, then we have $\log |g|^2 = F_{\lambda}(\eta_{\mathbb{R}},\zeta_{\mathbb{R}},\zeta_{\mathbb{C}})$. The function $F_{\lambda}$ is smooth at $(\eta_{\mathbb{R}},\zeta_{\mathbb{R}},\zeta_{\mathbb{C}})$ if $\eta,\zeta\in Y_{\lambda}$. Then we have
\begin{eqnarray}
\frac{\partial F_{\lambda}}{\partial \eta_{\mathbb{R}}} = \Phi_{\lambda}(\eta_{\mathbb{R}},\zeta_{\mathbb{C}}) > 0.\nonumber
\end{eqnarray}
Since $F_{\lambda}(\zeta_{\mathbb{R}},\zeta_{\mathbb{R}},\zeta_{\mathbb{C}}) = 0$, we obtain
\begin{eqnarray}
\log |g|^2 = \int_{\zeta_{\mathbb{R}}}^{\eta_{\mathbb{R}}}\Phi_{\lambda}(t, \zeta_{\mathbb{C}})dt.\nonumber
\end{eqnarray}
\end{proof}
It is obvious that $[z+wj] = [z+wj]g$ if $\mu_{\lambda}([z+wj])\in Z_{\lambda}$. Then the hyper-K\"ahler moment map $\mu_{\lambda}$ induces $[\mu_{\lambda}]:X(\lambda)/\mathbb{C}^{\times} \to {\rm Im}\mathbb{H}/\sim_{\lambda}$ from Lemma \ref{4.2}. 
Since $\mu_{\lambda}$ is continuous and surjective, $[\mu_{\lambda}]$ is also continuous and surjective.
\begin{prop}
Let $\lambda\in ({\rm Im}\mathbb{H})_0^\mathbf{I}$ be generic. Then $[\mu_{\lambda}]:X(\lambda)/\mathbb{C}^{\times} \to {\rm Im}\mathbb{H}/\sim_{\lambda}$ is a homeomorphism.\label{homeo}
\end{prop}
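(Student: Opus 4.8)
The plan is to upgrade the continuous surjection $[\mu_\lambda]$ to a homeomorphism by proving injectivity and openness separately. For injectivity I would exploit the two very different behaviors of $\mu_\lambda$ recorded in Section 2: over $Y_\lambda$ the restriction $\mu_\lambda|_{X(\lambda)^*}$ is a principal $S^1$-bundle, while each $-\lambda_m\in Z_\lambda$ has a single preimage, namely the unique $\mathbb{C}^\times$-fixed point $[x]$ with $x_m=0$ (here genericity of $\lambda$ guarantees that at most one coordinate vanishes). Suppose $[\mu_\lambda]([x]) = [\mu_\lambda]([y])$. If the common image lies in $Z_\lambda$, then by (ii) of $\sim_\lambda$ the two images coincide and the single-point fibre forces $[x]=[y]$. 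If it lies in $Y_\lambda$, then $[x],[y]\in X(\lambda)^*$ and (i) gives that the $\mathbb{C}$-components of the two moment images agree while the connecting vertical segment stays inside $Y_\lambda$.

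The crux is then to use Lemma \ref{4.2} to move $[x]$ by the $\mathbb{R}^{>0}$-factor of $\mathbb{C}^\times$ until its moment image matches that of $[y]$. The formula $\log|g|^2 = \int_{\zeta_{\mathbb{R}}}^{\eta_{\mathbb{R}}}\Phi_\lambda(t,\zeta_{\mathbb{C}})\,dt$ exhibits $\eta_{\mathbb{R}}$ as a strictly increasing function of $\log|g|^2$ (since $\Phi_\lambda>0$), so I must check that this function is onto the whole maximal segment, i.e. that the integral diverges at both ends. At a finite endpoint the segment abuts a point of $Z_\lambda$, where a single summand $\frac14|\zeta+\lambda_n|^{-1}$ contributes a logarithmic divergence, and at an infinite endpoint each summand already forces $\int^{\pm\infty}\Phi_\lambda\,dt=\pm\infty$. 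Hence $\eta_{\mathbb{R}}\mapsto\log|g|^2$ is a bijection onto $\mathbb{R}$, so some $g$ achieves $\mu_\lambda([x]g)=\mu_\lambda([y])$; the $S^1$-bundle structure then places $[x]g$ and $[y]$ on the same $S^1$-fibre, whence $[x]$ and $[y]$ lie on a common $\mathbb{C}^\times$-orbit. This yields injectivity, and together with the given continuity and surjectivity it remains only to prove openness, since a continuous open bijection is automatically a homeomorphism.

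For openness I would use the commuting square $[\mu_\lambda]\circ\pi_X = \pi_{{\rm Im}}\circ\mu_\lambda$, where $\pi_X:X(\lambda)\to X(\lambda)/\mathbb{C}^\times$ and $\pi_{{\rm Im}}:{\rm Im}\mathbb{H}\to{\rm Im}\mathbb{H}/\sim_\lambda$ are the quotient maps. The orbit map $\pi_X$ is automatically open, and $\pi_{{\rm Im}}$ is open because $Z_\lambda$ is a closed discrete subset of ${\rm Im}\mathbb{H}$ (as $\sum(1+|\lambda_n|)^{-1}<\infty$ forces $|\lambda_n|\to\infty$): the saturation of an open set only adjoins whole vertical segments lying in $Y_\lambda$, and each such compact segment remains in $Y_\lambda$ under small perturbations of $\zeta_{\mathbb{C}}$. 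It then suffices to show $\mu_\lambda$ is open; on $X(\lambda)^*$ this is the bundle projection, so the real work is at the fixed points, where I would appeal to the local model of the $S^1$-moment map near a fixed point, essentially the open map $\mathbb{C}^2\to\mathbb{R}^3$, $(z,w)\mapsto(|z|^2-|w|^2,2zw)$. Chasing the diagram then gives that $[\mu_\lambda]$ sends opens to opens.

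I expect the main obstacle to be precisely the analysis at the fixed-point set $Z_\lambda$. Both $X(\lambda)$ near a fixed point and ${\rm Im}\mathbb{H}/\sim_\lambda$ near a point of $Z_\lambda$ are singular — the latter is where the two collapsed half-segments above and below must assemble continuously with the fixed point itself — so verifying openness of $\mu_\lambda$ there, and checking that these two local models genuinely correspond under $[\mu_\lambda]$, is the delicate point. Away from $Z_\lambda$ the principal $S^1$-bundle structure over $Y_\lambda$ renders both injectivity and openness routine.
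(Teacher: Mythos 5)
Your proposal follows essentially the same route as the paper: injectivity via the case split on $Z_{\lambda}$ versus $Y_{\lambda}$, the integral formula of Lemma \ref{4.2} together with the positivity of $\Phi_{\lambda}$ to produce the required $g$, and the transitivity of the residual $S^1$-action on the fibres of $\mu_{\lambda}$; openness is handled by the same general-topology diagram chase that the paper leaves to the reader. (The endpoint-divergence check is not actually needed for injectivity, since the target $\eta_{\mathbb{R}}$ already lies in the segment and one simply defines $g$ by the finite integral, but it does no harm.)
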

\begin{proof}
It suffices to show that $[\mu_{\lambda}]$ is an injective and open map.

Let $[z + wj],[z' + w'j] \in X(\lambda)$ satisfy $\mu_{\lambda}([z + wj]) \sim_{\lambda} \mu_{\lambda}([z' + w'j])$. If $\mu_{\lambda}([z+wj])\in Z_{\lambda}$, then $[z + wj]=[z' + w'j]$. If $\mu_{\lambda}([z+wj])\in Y_{\lambda}$, then $\mu_{\lambda}([z' + w'j])$ is also an element of $Y_{\lambda}$. If we take $g\in \mathbb{C}^{\times}$ to be
\begin{eqnarray}
\log |g|^2 = \int_{\mu_{\lambda,\mathbb{R}}([z + wj])}^{\mu_{\lambda,\mathbb{R}}([z' + w'j])}\Phi_{\lambda}(t, \zeta_{\mathbb{C}})dt,\nonumber
\end{eqnarray}
then we have $\mu_{\lambda}([z + wj]g) = \mu_{\lambda}([z' + w'j])$. Since $S^1$ acts on  $\mu_{\lambda}^{-1}(\zeta)$ transitively for all $\zeta\in{\rm Im}\mathbb{H}$, there exists $\sigma\in S^1$ such that $[z + wj]g\sigma = [z' + w'j]$. Thus the injectivity has been proven.

The openness of $[\mu_{\lambda}]$ is easily shown by the elementary argument of general topology.
\end{proof}

From now on we identify $X(\lambda)/\mathbb{C}^{\times}$ with ${\rm Im}\mathbb{H}/\sim_{\lambda}$ by $[\mu_{\lambda}]$. To study the topological properties of $X(\lambda)/\mathbb{C}^{\times}$, we often observe ${\rm Im}\mathbb{H}/\sim_{\lambda}$ for convenience.

Now let $p_{\lambda}:X(\lambda) \to X(\lambda)/\mathbb{C}^{\times}$ and $\pi_{\lambda}:{\rm Im}\mathbb{H} \to {\rm Im}\mathbb{H}/\sim_{\lambda}$ be the quotient maps. 
Then $\mu_{\lambda,\mathbb{C}}:X(\lambda) \to \mathbb{C}$ induces a continuous map $[\mu_{\lambda,\mathbb{C}}]:X(\lambda)/\mathbb{C}^{\times} \to \mathbb{C}$ satisfying $[\mu_{\lambda,\mathbb{C}}]\circ p_{\lambda} = \mu_{\lambda,\mathbb{C}}$. 
On the other hand, the orthogonal projection ${\rm pr}_{\mathbb{C}}:{\rm Im}\mathbb{H} \to \mathbb{C}$ defined by ${\rm pr}_{\mathbb{C}}(\zeta):=\zeta_{\mathbb{C}}$ induces a continuous map $[{\rm pr}_{\mathbb{C}}]_{\lambda}:{\rm Im}\mathbb{H}/\sim_{\lambda} \to \mathbb{C}$ by $[{\rm pr}_{\mathbb{C}}]_{\lambda}\circ\pi_{\lambda} = {\rm pr}_{\mathbb{C}}$. 
Note that $[\mu_{\lambda,\mathbb{C}}]$ is identified with $[{\rm pr}_{\mathbb{C}}]_{\lambda}$ by $[\mu_{\lambda}]$, that is, $[\mu_{\lambda,\mathbb{C}}] = [{\rm pr}_{\mathbb{C}}]_{\lambda} \circ [\mu_{\lambda}]$.

There exists a natural partial order in ${\rm Im}\mathbb{H}/\sim_{\lambda}$ defined as follows.
\begin{definition}
{\rm For $\zeta, \eta\in{\rm Im}\mathbb{H}$, we write $\pi_{\lambda}(\zeta) \prec \pi_{\lambda}(\eta)$ if $\zeta_{\mathbb{C}} = \eta_{\mathbb{C}}$ and $\zeta_{\mathbb{R}} < \eta_{\mathbb{R}}$. Moreover we write $\pi_{\lambda}(\zeta) \preceq \pi_{\lambda}(\eta)$ if $\pi_{\lambda}(\zeta) \prec \pi_{\lambda}(\eta)$ or $\pi_{\lambda}(\zeta) = \pi_{\lambda}(\eta)$.}
\end{definition}
The above definition is well-defined and we have the structure of partially ordered set on ${\rm Im}\mathbb{H}/\sim_{\lambda}$.

\subsection{The topological structures of ${\rm Im}\mathbb{H}/\sim_{\lambda}$}
In this subsection we fix arbitrary generic $\lambda\in ({\rm Im}\mathbb{H})_0^\mathbf{I}$.

For an open set $V\subset \mathbb{C}$, put $\pi_{\lambda}(Y_{\lambda})|_{V}:=[{\rm pr}_{\mathbb{C}}]_{\lambda}^{-1}(V)\cap \pi_{\lambda}(Y_{\lambda})$ and
\begin{eqnarray}
\Gamma (V,\pi_{\lambda}(Y_{\lambda})|_{V}) := \{s:V\to \pi_{\lambda}(Y_{\lambda})|_{V};\ s\ {\rm is\ continuous,}\ [{\rm pr}_{\mathbb{C}}]_{\lambda}\circ s = {\rm id}_V\}.\nonumber
\end{eqnarray}
Here, the topology of $\mathbb{C}$ is the ordinary one as Euclidean space.
Under the identification ${\rm Im}\mathbb{H} = \mathbb{R}\times\mathbb{C}$ by $\zeta = \zeta_{\mathbb{R}}i - \zeta_{\mathbb{C}}k = (\zeta_{\mathbb{R}}, \zeta_{\mathbb{C}})$, all $s\in \Gamma (V,\pi_{\lambda}(Y_{\lambda})|_{V}) $ are written as $s(z) = \pi_{\lambda} ( \tilde{s}(z), z )$ for some continuous function $\tilde{s}:V \to \mathbb{R}$ such that the graph of $\tilde{s}$ does not intersect $Y_{\lambda}$.

Let $s_1,s_2:\mathbb{C}\to \pi_{\lambda}(Y_{\lambda})$ satisfy $[{\rm pr}_{\mathbb{C}}]_{\lambda}\circ s_1 = [{\rm pr}_{\mathbb{C}}]_{\lambda}\circ s_2 = {\rm id}_{\mathbb{C}}$, but are not necessary to be continuous, and put
\begin{eqnarray}
\mathbf{I}_{\lambda}^+(s_i) &:=& \{n\in\mathbf{I};\ \pi_{\lambda}(-\lambda_{n}) \prec s_i(-\lambda_{n,\mathbb{C}})\},\nonumber\\
\mathbf{I}_{\lambda}^-(s_i) &:=& \{n\in\mathbf{I};\ \pi_{\lambda}(-\lambda_{n}) \succ s_i(-\lambda_{n,\mathbb{C}})\}.\nonumber
\end{eqnarray}
Then we have a disjoint union $\mathbf{I} = \mathbf{I}_{\lambda}^+(s_i) \sqcup \mathbf{I}_{\lambda}^-(s_i)$. 
Then we define a map $k_{s_1,s_2}:\mathbb{C}\to \mathbb{Z}$ by
\begin{eqnarray}
k_{s_1,s_2}(z) := \sharp (\mathbf{I}_z\cap\mathbf{I}_{\lambda}^+(s_2)\cap\mathbf{I}_{\lambda}^-(s_1) ) - \sharp (\mathbf{I}_z\cap\mathbf{I}_{\lambda}^+(s_1)\cap\mathbf{I}_{\lambda}^-(s_2) )\nonumber
\end{eqnarray}
for $z\in\mathbb{C}$,
where $\mathbf{I}_z :=\{n\in\mathbf{I};\ -\lambda_{n,\mathbb{C}} = z\}$.
If $s_1,s_2$ are described as $s_i (z) = (\tilde{s}_i(z), z)$ for some $\tilde{s}_i \mathbb{C} \to \mathbb{R}$, we may write
\begin{eqnarray}
k_{s_1,s_2} (z) &=& \sharp \{n\in\mathbf{I}; \ -\lambda_{n,\mathbb{C}} = z,\ \tilde{s}_1(z) < -\lambda_{n,\mathbb{R}} < \tilde{s}_2(z)\} \nonumber\\
&\ &\quad - \sharp \{n\in\mathbf{I}; \ -\lambda_{n,\mathbb{C}} = z,\ \tilde{s}_2(z) < -\lambda_{n,\mathbb{R}} < \tilde{s}_1(z)\}. \nonumber
\end{eqnarray}
Now assume $s_1,s_2 \in  \Gamma (\mathbb{C},\pi_{\lambda}(Y_{\lambda}))$, hence $\tilde{s}_i$ can be taken as continuous functions.
Then the subset
\begin{eqnarray}
{\rm supp}(k_{s_1,s_2}) := \{z\in\mathbb{C};\ k_{s_1,s_2}(z) \neq 0\} \subset \mathbb{C}\nonumber
\end{eqnarray}
is discrete and closed because $\{ \lambda_n\in {\rm Im}\mathbb{H} ;\ n\in\mathbf{I}\} \subset {\rm Im}\mathbb{H}$ is also discrete and closed.

Conversely, let $s_1 \in  \Gamma (\mathbb{C},\pi_{\lambda}(Y_{\lambda}))$ and $s_2$ is not necessary to be continuous.
If ${\rm supp}(k_{s_1,s_2})$ is a discrete and closed subset of $\mathbb{C}$, then we can take $\tilde{s}_2$ to be continuous, consequently $s_2$ becomes continuous.
Thus we obtain the following proposition.

\begin{prop}
Let $s_1 \in \Gamma (\mathbb{C},\pi_{\lambda}(Y_{\lambda}))$. A map $s_2:\mathbb{C}\to \pi_{\lambda}(Y_{\lambda})$ which satisfies $[{\rm pr}_{\mathbb{C}}]_{\lambda}\circ s_2 = {\rm id}_{\mathbb{C}}$ is continuous if and only if ${\rm supp}(k_{s_1,s_2})$
is discrete and closed.
\label{4.11}
\end{prop}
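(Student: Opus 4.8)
The plan is to reduce everything to the existence of a continuous real representative. Recall that a section is continuous if and only if it can be written as $s(z)=\pi_{\lambda}(\tilde{s}(z),z)$ for some continuous $\tilde{s}:\mathbb{C}\to\mathbb{R}$ whose graph avoids $Z_{\lambda}$; I fix such a continuous $\tilde{s}_1$ for the given $s_1\in\Gamma(\mathbb{C},\pi_{\lambda}(Y_{\lambda}))$. The first step is the pointwise identity $k_{s_1,s_2}(z)=0\iff s_1(z)=s_2(z)$. Indeed, for a single $z$ at most one of the two counts defining $k_{s_1,s_2}(z)$ is nonzero (according to the sign of $\tilde{s}_1(z)-\tilde{s}_2(z)$), so $k_{s_1,s_2}(z)=0$ precisely when no point of $Z_{\lambda}$ lies strictly between $\tilde{s}_1(z)$ and $\tilde{s}_2(z)$, that is, precisely when $(\tilde{s}_1(z),z)$ and $(\tilde{s}_2(z),z)$ lie in the same connected component of the vertical line minus $Z_{\lambda}$, i.e. $s_1(z)=s_2(z)$. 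Hence ${\rm supp}(k_{s_1,s_2})=\{z\in\mathbb{C};\ s_1(z)\neq s_2(z)\}$, and on its complement $U$ one has $s_1=s_2$.

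For the ``only if'' direction I would use compactness together with the discreteness of $Z_{\lambda}$. Given a continuous $\tilde{s}_2$, on any closed disc $D$ the graphs of $\tilde{s}_1,\tilde{s}_2$ lie in a compact box $[-R,R]\times D$, which meets the discrete closed set $Z_{\lambda}=\{-\lambda_n\}$ in only finitely many points. Since $z\in{\rm supp}(k_{s_1,s_2})$ forces a point of $Z_{\lambda}$ with $\mathbb{C}$-coordinate $z$ and height strictly between $\tilde{s}_1(z)$ and $\tilde{s}_2(z)$, the support meets $D$ only in the finitely many $\mathbb{C}$-coordinates of those box points. Thus ${\rm supp}(k_{s_1,s_2})$ is locally finite, hence discrete and closed.

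The substantive direction is ``if''. Assume ${\rm supp}(k_{s_1,s_2})$ is discrete and closed; I construct a continuous representative $\tilde{s}_2$ of the given $s_2$. Around each (necessarily isolated) support point $z_0$ choose a small disc $D_0$ disjoint from the other support points. Here the key input is again that $Z_{\lambda}$ is discrete and closed: any sequence in $Z_{\lambda}$ whose $\mathbb{C}$-coordinates converge to $z_0$ must have heights tending to infinity, so after shrinking $D_0$ the only points of $Z_{\lambda}$ inside a fixed bounded window $[-R,R]\times D_0$ are those lying over $z_0$ itself. Consequently, for $z\in D_0\setminus\{z_0\}$ the whole window $[-R,R]\times\{z\}$ is a single $\sim_{\lambda}$-class, which (as $z\in U$) equals both $s_1(z)$ and $s_2(z)$, while over $z_0$ this window is cut by finitely many points of $Z_{\lambda}$ into the intervals among which $s_1(z_0)$ and $s_2(z_0)$ sit. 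I then interpolate: take $\tilde{s}_2=\tilde{s}_1$ on $\mathbb{C}\setminus\bigcup D_0$ and on each $\partial D_0$, and inside $D_0$ deform $\tilde{s}_1$ through the window to a value representing the interval $s_2(z_0)$ at the centre (for instance by a bump-function interpolation keeping all values in $[-R,R]$). Since every value in the window over $z\in D_0\setminus\{z_0\}$ represents the same class, this $\tilde{s}_2$ represents $s_2$ everywhere, has graph in $Y_{\lambda}$, and is continuous; hence $s_2$ is continuous.

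The main obstacle is establishing this clean local picture in the ``if'' direction, namely that over a small punctured disc around $z_0$ there are no points of $Z_{\lambda}$ in a bounded height window containing the graphs; this is exactly what frees the interpolation and is where the discreteness and closedness of $Z_{\lambda}=\{-\lambda_n\}$ (forcing heights to infinity when $\mathbb{C}$-coordinates accumulate) is essential. Once that is in place the local interpolation and the gluing with $\tilde{s}_1$ are routine, and the equivalence between continuity of $s_2$ and existence of the continuous representative $\tilde{s}_2$ closes the argument.
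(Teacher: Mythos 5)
Your argument is correct and follows essentially the same route as the paper, which only sketches this step: the ``only if'' direction via local finiteness of $Z_{\lambda}=\{-\lambda_n\}$ in a compact height window over a disc, and the ``if'' direction by producing a continuous representative $\tilde{s}_2$ that agrees with $\tilde{s}_1$ off small discs around the (isolated) support points and is interpolated inside them. Your write-up merely supplies the pointwise identity ${\rm supp}(k_{s_1,s_2})=\{z;\ s_1(z)\neq s_2(z)\}$ and the bump-function interpolation that the paper leaves implicit.
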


\section{Biholomorphisms\label{main result}}
\subsection{Outline of the constructions}
In this subsection we explain how to construct biholomorphisms between $X(\lambda)$ and $X(\lambda')$ for some generic $\lambda,\lambda'\in ({\rm Im}\mathbb{H})_0^\mathbf{I}$. The biholomorphisms between $X(\lambda)$ and $X(\lambda')$ will be constructed if there exists a homeomorphism $\mathbf{h}:{\rm Im}\mathbb{H}/\sim_{\lambda} \to {\rm Im}\mathbb{H}/\sim_{\lambda'}$ preserving partial orders $\preceq$,
which satisfies $[{\rm pr}_{\mathbb{C}}]_{\lambda'}\circ\mathbf{h} = [{\rm pr}_{\mathbb{C}}]_{\lambda}$.

For each continuous section $s \in \Gamma (\mathbb{C},\pi_{\lambda}(Y_{\lambda}))$, we have an open subset
\begin{eqnarray}
X(\lambda)^s &:=& \mu_{\lambda}^{-1}(\pi_{\lambda}^{-1}(s(\mathbb{C})))\nonumber\\
&=& p_{\lambda}^{-1}([\mu_{\lambda}]^{-1}(s(\mathbb{C})))\subset X(\lambda),\nonumber
\end{eqnarray}
and it is easy to see $X(\lambda)^* = \bigcup_{s \in \Gamma (\mathbb{C},\pi_{\lambda}(Y_{\lambda}))}X(\lambda)^s$. In Section \ref{equiv hol func} the holomorphic coordinates over $X(\lambda)^s$ are constructed. By combining these holomorphic coordinates we obtain biholomorphic maps $X(\lambda)^s \to X(\lambda')^{\mathbf{h}(s)}$, then show that these glue on the intersections $X(\lambda)^{s_1}\cap X(\lambda)^{s_2}$ for all $s_1,s_2\in \Gamma (\mathbb{C},\pi_{\lambda}(Y_{\lambda}))$ in Section \ref{bihol}. Thus we obtain a biholomorphic map $X(\lambda)^* \to X(\lambda')^*$, which can be extended to a biholomorphic map $X(\lambda) \to X(\lambda')$.

\subsection{Holomorphic coordinates on $X(\lambda)^s$}\label{equiv hol func}
In this section we assume that $\lambda\in ({\rm Im}\mathbb{H})_0^\mathbf{I}$ is generic and $ \lambda_{n,\mathbb{R}} \neq 0$ for all $n\in\mathbf{I}$. We may assume the latter condition without loss of generality since there exists an isomorphism $X(\lambda) \cong X(\lambda + \underline{\eta})$ for all $\eta\in{\rm Im}\mathbb{H}$ from Section \ref{sec2.2}.

First of all we see that there exist $\mathbb{C}^{\times}$-equivariant holomorphic functions on $X(\lambda)^{\mathbf{o}_{\lambda}}$, where $\mathbf{o}_{\lambda}\in\Gamma (\mathbb{C},\pi_{\lambda}(Y_{\lambda}))$ is defined by $\mathbf{o}_{\lambda}(z):=\pi_{\lambda}(0,z)$. $\mathbf{I}_{\lambda}^{\pm}(\mathbf{o}_{\lambda})$ are given by
\begin{eqnarray}
\mathbf{I}_{\lambda}^+(\mathbf{o}_{\lambda}) &=& \{n\in\mathbf{I};\lambda_{n,\mathbb{R}} > 0\},\nonumber\\
\mathbf{I}_{\lambda}^-(\mathbf{o}_{\lambda}) &=& \{n\in\mathbf{I};\lambda_{n,\mathbb{R}} < 0\}.\nonumber
\end{eqnarray}
\begin{prop}\label{nonzero}
Let $[z + wj]\in X(\lambda)^{\mathbf{o}_{\lambda}}$. Then $z_n$ is nonzero if $n\in\mathbf{I}_{\lambda}^+(\mathbf{o}_{\lambda})$, and $w_n$ is nonzero if $n\in\mathbf{I}_{\lambda}^-(\mathbf{o}_{\lambda})$.
\end{prop}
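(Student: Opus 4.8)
The plan is to unwind the defining conditions for a point to lie in $X(\lambda)^{\mathbf{o}_{\lambda}}$ and combine them with the moment-map equations, showing that the relevant coordinate cannot vanish without contradicting the $t$-stability condition that is built into the GIT quotient. First I would fix a representative $z + wj \in \hat{\mu}_{\Lambda}^{-1}(0)$ of the class $[z+wj]$, and recall that by Proposition \ref{homeo} the condition $[z+wj]\in X(\lambda)^{\mathbf{o}_{\lambda}}$ means $\mu_{\lambda}([z+wj]) \in \pi_{\lambda}^{-1}(\mathbf{o}_{\lambda}(\mathbb{C}))$; writing $\zeta := \mu_{\lambda}([z+wj]) = \zeta_{\mathbb{R}} i - \zeta_{\mathbb{C}} k$, this says precisely that the segment $t\,\zeta + (1-t)(0,\zeta_{\mathbb{C}})$ stays in $Y_{\lambda}$ for $t\in[0,1]$ — equivalently $\zeta$ lies in the same path-component of $[{\rm pr}_{\mathbb{C}}]_{\lambda}^{-1}(\zeta_{\mathbb{C}})\cap Y_{\lambda}$ as the point with real part $0$.

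Next I would translate the claim into the moment-map equalities recorded in the proof of Lemma \ref{4.2}, namely $|z_n|^2 - |w_n|^2 = \lambda_{n,\mathbb{R}} + \zeta_{\mathbb{R}}$ and $2 z_n w_n = \lambda_{n,\mathbb{C}} + \zeta_{\mathbb{C}}$ for every $n\in\mathbf{I}$. Suppose, for contradiction, that $n \in \mathbf{I}_{\lambda}^+(\mathbf{o}_{\lambda})$, i.e. $\lambda_{n,\mathbb{R}} > 0$, yet $z_n = 0$. From the second equation, $z_n = 0$ forces $\lambda_{n,\mathbb{C}} + \zeta_{\mathbb{C}} = 0$, that is $\zeta_{\mathbb{C}} = -\lambda_{n,\mathbb{C}}$; from the first, $z_n=0$ gives $|w_n|^2 = -(\lambda_{n,\mathbb{R}} + \zeta_{\mathbb{R}}) \ge 0$, so $\zeta_{\mathbb{R}} + \lambda_{n,\mathbb{R}} \le 0$. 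The core of the argument is then to show this is incompatible with the segment condition: since $\zeta_{\mathbb{C}} = -\lambda_{n,\mathbb{C}}$, the point $(\tau, \zeta_{\mathbb{C}})$ lies in $Z_{\lambda}$ exactly when $\tau = -\lambda_{n,\mathbb{R}}$, which is the unique puncture of the vertical line $\{\zeta_{\mathbb{C}} = -\lambda_{n,\mathbb{C}}\}$ coming from the index $n$ (here I would invoke genericity of $\lambda$ to control collisions on this line). Because $-\lambda_{n,\mathbb{R}} < 0$ while the endpoint of the segment has real part $0$, and the other endpoint has real part $\zeta_{\mathbb{R}} \le -\lambda_{n,\mathbb{R}} < 0$, the segment from $(\zeta_{\mathbb{R}},\zeta_{\mathbb{C}})$ to $(0,\zeta_{\mathbb{C}})$ must pass through the forbidden point $(-\lambda_{n,\mathbb{R}}, -\lambda_{n,\mathbb{C}}) \in Z_{\lambda}$, violating the requirement $t\zeta + (1-t)(0,\zeta_{\mathbb{C}}) \in Y_{\lambda}$. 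This contradiction shows $z_n \ne 0$. The argument for $w_n$ when $n\in\mathbf{I}_{\lambda}^-(\mathbf{o}_{\lambda})$ is the mirror image, replacing $\lambda_{n,\mathbb{R}}>0$ by $\lambda_{n,\mathbb{R}}<0$ so that the puncture sits at a positive real height and again blocks the segment to height $0$.

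The step I expect to be the main obstacle is making the geometric "the segment must cross the puncture" argument fully rigorous: one must check that the relevant puncture is genuinely the one indexed by $n$ and that no degenerate configuration (e.g. $\zeta_{\mathbb{R}} + \lambda_{n,\mathbb{R}} = 0$ with $w_n = 0$ as well, forcing $z_n + w_n j = 0$) slips through. The assumption $\lambda_{n,\mathbb{R}}\ne 0$ imposed at the start of Section \ref{equiv hol func}, together with genericity, should rule these out, since it guarantees the puncture $(-\lambda_{n,\mathbb{R}}, -\lambda_{n,\mathbb{C}})$ has nonzero real part and hence lies strictly between the two endpoints rather than at an endpoint. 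Once the crossing is justified, the proposition follows immediately.
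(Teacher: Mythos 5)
Your argument is correct, but it takes a different (and slightly longer) route than the paper. The paper's proof normalizes the representative first: since $\mu_{\lambda}([z+wj])\sim_{\lambda}(0,\mu_{\lambda,\mathbb{C}}([z+wj]))$, the injectivity of $[\mu_{\lambda}]$ (Proposition \ref{homeo}) produces $g\in\mathbb{C}^{\times}$ with $\mu_{\lambda,\mathbb{R}}([z+wj]g)=0$; because the $\mathbb{C}^{\times}$- and $G_{\lambda}^{\mathbb{C}}$-actions multiply each $z_n,w_n$ by nonzero scalars, one may assume $\mu_{\lambda,\mathbb{R}}([z+wj])=0$ outright, and then the single real equation $|z_n|^2-|w_n|^2=\lambda_{n,\mathbb{R}}$ gives the conclusion in one line. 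You instead work with the given representative and argue by contradiction: from $z_n=0$ you extract $\zeta_{\mathbb{C}}=-\lambda_{n,\mathbb{C}}$ via the complex moment-map equation and $\zeta_{\mathbb{R}}\le-\lambda_{n,\mathbb{R}}<0$ via the real one, and then show the segment from $\zeta$ to $(0,\zeta_{\mathbb{C}})$ required by condition (i) of the definition of $\sim_{\lambda}$ must hit $-\lambda_n\in Z_{\lambda}$. Both proofs are sound; yours trades the normalization step (and the implicit observation that the action preserves vanishing of coordinates) for a geometric segment-crossing argument, and your attention to the degenerate case $w_n=0$ (where $\zeta=-\lambda_n$ itself, excluded because $\lambda_{n,\mathbb{R}}\neq 0$) is exactly the right point to check. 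One minor inaccuracy: genericity does not make $-\lambda_n$ the unique puncture on the vertical line $\{\zeta_{\mathbb{C}}=-\lambda_{n,\mathbb{C}}\}$, since distinct indices may share the same $\mathbb{C}$-component; but uniqueness is never needed, as a single puncture on the segment already yields the contradiction.
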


\begin{proof}
We have ${\mu_{\lambda}}([z+wj]) \sim_{\lambda} (0,\mu_{\lambda,\mathbb{C}}([z+wj]))$ from the assumption $[z + wj]\in X(\lambda)^{\mathbf{o}_{\lambda}}$. 
By the injectivity of $[\mu_{\lambda}]$, there exists $g\in \mathbb{C}^{\times}$ such that $\mu_{\lambda}([z+wj]g) = (0,\mu_{\lambda,\mathbb{C}}([z+wj]))$. 
Thus we may suppose $\mu_{\lambda,\mathbb{R}}([z+wj]) = 0$, and we have $|z_n|^2 - |w_n|^2 = \lambda_{n,\mathbb{R}}$. Hence we obtain $|z_n|^2 > 0$ if $\lambda_{n,\mathbb{R}} > 0$, and $|w_n|^2 > 0$ if $\lambda_{n,\mathbb{R}} < 0$.
\end{proof}

Now we consider the infinite product
\begin{eqnarray}
\Big( \prod_{n\in\mathbf{I}_{\lambda}^+(\mathbf{o}_{\lambda})}\frac{z_n}{\alpha_n}\Big)
\Big( \prod_{n\in\mathbf{I}_{\lambda}^-(\mathbf{o}_{\lambda})}\frac{w_n}{\beta_n}\Big)^{-1}\label{hol.func}
\end{eqnarray}
for $z + wj\in \hat{\mu}_{\Lambda,\mathbb{C}}^{-1}(0)_{\lambda_{\mathbb{R}}}$ such that $[z + wj]\in X(\lambda)^{\mathbf{o}_{\lambda}}$, 
where we take $\Lambda\in \mathbb{H}^\mathbf{I}$ and $\alpha = (\alpha_n)_{n\in\mathbf{I}},\ \beta = (\beta_n)_{n\in\mathbf{I}}\in \mathbb{C}^{\mathbf{I}}$ to be $\Lambda_n i \overline{\Lambda}_n = \lambda_n$ and $\Lambda_n = \alpha_n + \beta_n j$. 
If we put $u_n:=z_n - \alpha_n$ and $v_n := w_n - \beta_n$, then we can see $\sum_{n\in\mathbf{I}}|u_n|^2 < +\infty$ and $\sum_{n\in\mathbf{I}}|v_n|^2 < +\infty$. 
On the other hand, we can deduce
\begin{eqnarray}
\sum_{n\in\mathbf{I}_{\lambda}^+(\mathbf{o}_{\lambda})}\frac{1}{|\alpha_n|^2} < +\infty,\ \sum_{n\in\mathbf{I}_{\lambda}^-(\mathbf{o}_{\lambda})}\frac{1}{|\beta_n|^2} < +\infty\nonumber
\end{eqnarray}
since $2|\alpha_n|^2 = |\lambda_n| + \lambda_{n,\mathbb{R}} \ge |\lambda_n|$ for $n\in\mathbf{I}_{\lambda}^+(\mathbf{o}_{\lambda})$, and $2|\beta_n|^2 = |\lambda_n| - \lambda_{n,\mathbb{R}} \ge |\lambda_n|$ for $n\in\mathbf{I}_{\lambda}^-(\mathbf{o}_{\lambda})$. Then the Cauchy-Schwarz inequality gives $\sum_{n\in\mathbf{I}_{\lambda}^+(\mathbf{o}_{\lambda})}\frac{|u_n|}{|\alpha_n|} < +\infty$ and $\sum_{n\in\mathbf{I}_{\lambda}^-(\mathbf{o}_{\lambda})}\frac{|v_n|}{|\beta_n|}< +\infty$, hence the infinite product (\ref{hol.func}) converges by the next lemma.

\begin{lem}\label{convergence}
Let $x_n\in \mathbb{C}\backslash \{ -1 \}$ for $n= 1,2,\cdots$. If we have $\sum_{n=1}^{\infty}|x_n| < +\infty$, then there exists a limit $\lim_{N\to\infty}\prod_{n=1}^N(1 + x_n) \neq 0$.
\end{lem}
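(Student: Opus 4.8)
Let $x_n\in\mathbb{C}\setminus\{-1\}$ for $n=1,2,\dots$. If $\sum_{n=1}^\infty|x_n|<+\infty$, then $\lim_{N\to\infty}\prod_{n=1}^N(1+x_n)$ exists and is nonzero.

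The plan is to reduce the convergence of the infinite product to the convergence of a series by taking logarithms, which is the standard route for such results, and then to handle the two technical points carefully: the nonvanishing of the limit and the fact that the $x_n$ are complex (so that the logarithm must be chosen as a principal branch and only eventually well-defined).

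First I would observe that since $\sum_n|x_n|<+\infty$, the terms satisfy $x_n\to 0$, so there is an index $N_0$ with $|x_n|<1/2$ for all $n\ge N_0$. For such $n$ the principal logarithm $\mathrm{Log}(1+x_n)$ is defined, and the elementary estimate $|\mathrm{Log}(1+x_n)|\le 2|x_n|$ (valid once $|x_n|\le 1/2$, e.g.\ from the power series $\mathrm{Log}(1+w)=w-\tfrac{w^2}{2}+\cdots$ bounded by $|w|/(1-|w|)$) gives $\sum_{n\ge N_0}|\mathrm{Log}(1+x_n)|\le 2\sum_{n\ge N_0}|x_n|<+\infty$. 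Hence the series $\sum_{n\ge N_0}\mathrm{Log}(1+x_n)$ converges absolutely to some $L\in\mathbb{C}$.

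Next I would exponentiate. Writing the partial product for $N\ge N_0$ as
\begin{eqnarray}
\prod_{n=N_0}^N(1+x_n)=\exp\Big(\sum_{n=N_0}^N\mathrm{Log}(1+x_n)\Big),\nonumber
\end{eqnarray}
and using continuity of $\exp$, the right-hand side converges to $e^{L}\neq 0$ as $N\to\infty$. Multiplying by the fixed finite factor $\prod_{n=1}^{N_0-1}(1+x_n)$, which is a nonzero constant because each $1+x_n\neq 0$, shows that the full product $\prod_{n=1}^N(1+x_n)$ converges to $\big(\prod_{n=1}^{N_0-1}(1+x_n)\big)e^{L}$, a nonzero limit. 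This completes the argument.

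The only real obstacle, and the step deserving care, is the nonvanishing of the limit rather than its mere existence: a priori the partial products could tend to $0$. This is exactly what the logarithmic reduction rules out, since the limit is expressed as a product of $e^{L}$ (automatically nonzero) with a finite product of nonzero factors; the hypothesis $x_n\neq-1$ is precisely what guarantees none of the initial factors vanish. A secondary point to state explicitly is that the logarithm is only applied to the tail $n\ge N_0$, so that each $\mathrm{Log}(1+x_n)$ lies in the domain of the principal branch and the estimate $|\mathrm{Log}(1+x_n)|\le 2|x_n|$ applies.
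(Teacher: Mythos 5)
Your proposal is correct and follows essentially the same route as the paper: both reduce the product to a series by taking the principal logarithm of $1+x_n$ (applied to the tail where $|x_n|<1/2$ so the power-series estimate $|\mathrm{Log}(1+x_n)|\le C|x_n|$ is valid), conclude absolute convergence of the log-series from $\sum_n|x_n|<+\infty$, and exponentiate to get a nonzero limit. The only cosmetic difference is that you split off the finite head $\prod_{n<N_0}(1+x_n)$ as an explicit nonzero factor, whereas the paper writes every factor as $e^{a_n+b_ni}$ from the start and only invokes the series expansion for $n\ge N_0$.
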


\begin{proof}
Since $1 + x_n \neq 0$, we may put $1 + x_n = e^{a_n + b_ni}$ for some $a_n,b_n\in\mathbb{R}$ such that $-\pi < b_n \le \pi$.
Then we have $\prod_{n=1}^{N}(1 + x_n) = e^{\sum_{n=1}^{N} a_n + b_n i}$,
therefore it suffices to show the convergence of the series $\sum_{n=1}^{\infty} |a_n + b_ni|$.
From the assumption $\sum_{n=1}^{\infty}|x_n| < +\infty$, we may suppose there exists a sufficiently large positive integer $N_0$, and $|x_n| < \frac{1}{2}$ for all $n\ge N_0$. Then we have
\begin{eqnarray}
a_n + b_n i = \log (1 + x_n) = \sum_{k=1}^{\infty}(-1)^{n-1}\frac{x_n^k}{k}\nonumber
\end{eqnarray}
for every $n\ge N_0$. Consequently, we can deduce
\begin{eqnarray}
|a_n + b_n i| \le |x_n| \sum_{k=1}^{\infty} \frac{|x_n|^{k-1}}{k}
\le \Big( \sum_{k=1}^{\infty} \frac{1}{k2^{k-1}} \Big) |x_n|.\nonumber
\end{eqnarray}
Thus we obtain
\begin{eqnarray}
\sum_{n=1}^{\infty} |a_n + b_n i| \le \sum_{n=1}^{N_0} |a_n + b_n i| + \sum_{k=1}^{\infty} \frac{1}{k2^{k-1}} \sum_{n=N_0}^{\infty} |x_n| < +\infty.\nonumber
\end{eqnarray}
\end{proof}

From Proposition \ref{nonzero} and Lemma \ref{convergence}, the value of (\ref{hol.func}) is nonzero if $[z+wj]\in X_{GIT}(\lambda)^{\mathbf{o}_{\lambda}}$. Moreover the function (\ref{hol.func}) is $G_{\lambda}^{\mathbb{C}}$-invariant, consequently, it induces a smooth function $f_{\lambda}^{\mathbf{o}_{\lambda}}: X(\lambda)^{\mathbf{o}_{\lambda}} \to \mathbb{C}^{\times}$ defined by
\begin{eqnarray}
f_{\lambda}^{\mathbf{o}_{\lambda}}([z+wj]) := \prod_{n\in\mathbf{I}_{\lambda}^+(\mathbf{o}_{\lambda})}\frac{z_n}{\alpha_n} \cdot \Big( \prod_{n\in\mathbf{I}_{\lambda}^-(\mathbf{o}_{\lambda})}\frac{w_n}{\beta_n}\Big)^{-1}\nonumber
\end{eqnarray}
for $[z+wj] \in X_{GIT}(\lambda)^{\mathbf{o}_{\lambda}}$. It is easy to check that $f_{\lambda}^{\mathbf{o}_{\lambda}}$ is $\mathbb{C}^{\times}$-equivariant, in the sense $f_{\lambda}^{\mathbf{o}_{\lambda}}([z+wj]g) = gf_{\lambda}^{\mathbf{o}_{\lambda}}([z+wj])$ for all $g\in\mathbb{C}^{\times}$.

\begin{prop}\label{darboux}
On $X(\lambda)^{\mathbf{o}_{\lambda}}$, the holomorphic symplectic form is given by
\begin{eqnarray}
2 \omega_{\lambda,\mathbb{C}} = \frac{d f_{\lambda}^{\mathbf{o}_{\lambda}}}{f_{\lambda}^{\mathbf{o}_{\lambda}}} \wedge d\mu_{\lambda,\mathbb{C}} \nonumber
\end{eqnarray}
\end{prop}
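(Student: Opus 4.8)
The plan is to exploit the residual $\mathbb{C}^{\times}$-action, whose generating holomorphic vector field I denote by $V$, and to abbreviate $f:=f_{\lambda}^{\mathbf{o}_{\lambda}}$. Since $X(\lambda)^{\mathbf{o}_{\lambda}}\subset X(\lambda)^{*}$ and $\mathbb{C}^{\times}$ acts freely on $X(\lambda)^{*}$, the field $V$ is nowhere vanishing on $X(\lambda)^{\mathbf{o}_{\lambda}}$. On a complex surface the contraction $\iota_{V}$ is injective on holomorphic $(2,0)$-forms at every point where $V\neq 0$, so to prove the identity of two such forms it suffices to show that their interior products with $V$ coincide. Both $2\omega_{\lambda,\mathbb{C}}$ and $\frac{df}{f}\wedge d\mu_{\lambda,\mathbb{C}}$ are nonvanishing holomorphic $2$-forms, so the whole statement reduces to computing $\iota_{V}$ of each.

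The right-hand side is immediate from the two functional properties already recorded. Since $f$ is $\mathbb{C}^{\times}$-equivariant one has $V(f)=f$, hence $\frac{df}{f}(V)=1$; since $\mu_{\lambda,\mathbb{C}}$ is $\mathbb{C}^{\times}$-invariant one has $d\mu_{\lambda,\mathbb{C}}(V)=0$. Therefore $\iota_{V}\big(\frac{df}{f}\wedge d\mu_{\lambda,\mathbb{C}}\big)=d\mu_{\lambda,\mathbb{C}}$.

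The remaining, genuinely computational point is the moment-map identity $2\,\iota_{V}\omega_{\lambda,\mathbb{C}}=d\mu_{\lambda,\mathbb{C}}$, that is, that $\frac{1}{2}\mu_{\lambda,\mathbb{C}}$ is a Hamiltonian for the $\mathbb{C}^{\times}$-action, with the precise constant. I would verify this upstairs on $M_{\Lambda}$. The $\mathbb{C}^{\times}$-action is induced from the $(\mathbb{C}^{\times})^{\mathbf{I}}$-action $z_{n}\mapsto z_{n}g_{n},\ w_{n}\mapsto w_{n}g_{n}^{-1}$ through $g\mapsto\prod_{n}g_{n}$, so a lift of $V$ to $M_{\Lambda}$ is the generator $\tilde{V}_{\xi}=\sum_{n}\xi_{n}(z_{n}\partial_{z_{n}}-w_{n}\partial_{w_{n}})$ of any one-parameter subgroup $\xi\in\mathbb{C}^{\mathbf{I}}$ of finite support with $\sum_{n}\xi_{n}=1$. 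Such a $\tilde{V}_{\xi}$ is an admissible tangent vector, is tangent to $\hat{\mu}_{\Lambda,\mathbb{C}}^{-1}(0)_{\lambda_{\mathbb{R}}}$ (because each $z_{n}w_{n}$, hence $\hat{\mu}_{\Lambda,\mathbb{C}}$, is unchanged by the action), and projects to $V$. A direct contraction gives $\iota_{\tilde{V}_{\xi}}\big(\sum_{n}dz_{n}\wedge dw_{n}\big)=\sum_{n}\xi_{n}\,d(z_{n}w_{n})$. On $\hat{\mu}_{\Lambda,\mathbb{C}}^{-1}(0)$ one has $2z_{n}w_{n}-\lambda_{n,\mathbb{C}}=\mu_{\lambda,\mathbb{C}}$ independently of $n$, so after restriction $d(z_{n}w_{n})=\frac{1}{2}d\mu_{\lambda,\mathbb{C}}$, and since $\sum_{n}\xi_{n}=1$ the contraction equals $\frac{1}{2}d\mu_{\lambda,\mathbb{C}}$. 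Using that the reduced form is characterised by $q^{*}\omega_{\lambda,\mathbb{C}}=\sum_{n}dz_{n}\wedge dw_{n}$ restricted to $\hat{\mu}_{\Lambda,\mathbb{C}}^{-1}(0)_{\lambda_{\mathbb{R}}}$, where $q$ denotes the quotient projection onto $X_{GIT}(\lambda)$, together with the injectivity of $q^{*}$ on forms (as $q$ is a submersion), this descends to $2\,\iota_{V}\omega_{\lambda,\mathbb{C}}=d\mu_{\lambda,\mathbb{C}}$.

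Comparing the two computations gives $\iota_{V}(2\omega_{\lambda,\mathbb{C}})=\iota_{V}\big(\frac{df}{f}\wedge d\mu_{\lambda,\mathbb{C}}\big)=d\mu_{\lambda,\mathbb{C}}$, and the injectivity of $\iota_{V}$ then yields the equality of the $2$-forms. The hard part will be the third paragraph: one must check that $\tilde{V}_{\xi}$ has finite norm, that contraction commutes with the restriction and descent through the infinite-dimensional $G_{\lambda}^{\mathbb{C}}$-quotient, and that the factor $\frac{1}{2}$ arising from $2z_{n}w_{n}=\mu_{\lambda,\mathbb{C}}+\lambda_{n,\mathbb{C}}$ is exactly what produces the constant $2$ in the statement.
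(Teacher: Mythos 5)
Your argument is correct, but it is a genuinely different route from the paper's. The paper proves the identity by brute force upstairs: it pulls back $df_{\lambda}^{\mathbf{o}_{\lambda}}\wedge d\mu_{\lambda,\mathbb{C}}$ to $\hat{\mu}_{\Lambda,\mathbb{C}}^{-1}(0)_{\lambda_{\mathbb{R}}}$, differentiates the infinite product via its logarithmic derivative to get $f_{\lambda}^{\mathbf{o}_{\lambda}}\big(\sum_{n\in\mathbf{I}_{\lambda}^+(\mathbf{o}_{\lambda})}\frac{dz_n}{z_n}-\sum_{n\in\mathbf{I}_{\lambda}^-(\mathbf{o}_{\lambda})}\frac{dw_n}{w_n}\big)$, wedges each term with $d(2z_nw_n)$, and observes that every term collapses to $2\,dz_n\wedge dw_n$, giving $2f_{\lambda}^{\mathbf{o}_{\lambda}}\sum_n dz_n\wedge dw_n=\pi_{\Lambda}^*(2f_{\lambda}^{\mathbf{o}_{\lambda}}\omega_{\lambda,\mathbb{C}})$. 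You instead never differentiate the product at all: you contract both sides with the generator $V$ of the $\mathbb{C}^{\times}$-action, use the equivariance $V(f)=f$ and invariance $V(\mu_{\lambda,\mathbb{C}})=0$ to dispose of the right-hand side, and reduce everything to the moment-map identity $2\iota_V\omega_{\lambda,\mathbb{C}}=d\mu_{\lambda,\mathbb{C}}$, which you verify with a finite computation upstairs. Your contraction of $\sum_n dz_n\wedge dw_n$ with $\tilde V_{\xi}$, the use of $2z_nw_n=\mu_{\lambda,\mathbb{C}}+\lambda_{n,\mathbb{C}}$ on the level set, and the injectivity of $\iota_V$ on $(2,0)$-forms at points where $V\neq 0$ on a surface are all sound. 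What your route buys is that the only infinite sum you must control is the symplectic form itself, not the termwise-differentiated product; what it costs is one extra hypothesis that the paper's route does not need.

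That extra hypothesis is the one point you must not leave implicit: your reduction via $\iota_V$ only identifies the $(2,0)$-components of the two sides (the contraction of any $(1,1)$- or $(0,2)$-remainder with the $(1,0)$-field $V$ vanishes automatically here, since $\iota_V\bar\partial f=0$ and $\iota_V\partial\mu_{\lambda,\mathbb{C}}=d\mu_{\lambda,\mathbb{C}}(V)=0$). So you need to know in advance that $f_{\lambda}^{\mathbf{o}_{\lambda}}$ is holomorphic, i.e.\ that $\frac{df}{f}\wedge d\mu_{\lambda,\mathbb{C}}$ really is of type $(2,0)$. In the paper's logical order this is not available: there $f_{\lambda}^{\mathbf{o}_{\lambda}}$ is only introduced as a smooth function, and its holomorphy is deduced \emph{afterwards} from Proposition \ref{darboux} together with Lemma \ref{localchart}. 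You can fill this in independently — the finite partial products are visibly holomorphic and Lemma \ref{convergence} gives locally uniform convergence, so the limit is holomorphic and descends holomorphically through the quotient — but as written your proof silently assumes it, so state and justify it before invoking the injectivity of $\iota_V$ on $(2,0)$-forms.
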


\begin{proof}
Let $\iota_{\Lambda}: \hat{\mu}_{\Lambda,\mathbb{C}}^{-1}(0)_{\lambda_{\mathbb{R}}} \to M_{\Lambda}$ be the embedding map, and $\pi_{\Lambda}: \hat{\mu}_{\Lambda,\mathbb{C}}^{-1}(0)_{\lambda_{\mathbb{R}}} \to X_{GIT}(\lambda)$ be the quotient map.
Since $\omega_{\lambda,\mathbb{C}}$ is defined by $\pi_{\Lambda}^*\omega_{\lambda,\mathbb{C}} = \iota_{\Lambda}^* \sum_{n\in\mathbf{I}} dz_n\wedge dw_n$, we have
\begin{eqnarray}
\pi_{\Lambda}^* (d f_{\lambda}^{\mathbf{o}_{\lambda}}\wedge d\mu_{\lambda,\mathbb{C}})_{z+wj} &=& d\Big\{ \prod_{n\in\mathbf{I}_{\lambda}^+(\mathbf{o}_{\lambda})}\frac{z_n}{\alpha_n} \cdot \Big( \prod_{n\in\mathbf{I}_{\lambda}^-(\mathbf{o}_{\lambda})}\frac{w_n}{\beta_n}\Big)^{-1}\Big\} \wedge (d\mu_{\lambda,\mathbb{C}})_{[z+wj]}\nonumber\\
&=& f_{\lambda}^{\mathbf{o}_{\lambda}}([z+wj]) \bigg( \sum_{n\in\mathbf{I}_{\lambda}^+(\mathbf{o}_{\lambda})} \frac{dz_n}{z_n}\wedge d(2z_nw_n ) \nonumber\\
&\ & \quad\quad\quad\quad\quad\quad\quad\quad - \sum_{n\in\mathbf{I}_{\lambda}^-(\mathbf{o}_{\lambda})} \frac{dw_n}{w_n}\wedge d(2z_nw_n ) \bigg) \nonumber\\
&=& 2 f_{\lambda}^{\mathbf{o}_{\lambda}}([z+wj]) \sum_{n\in\mathbf{I}}  dz_n\wedge dw_n.\nonumber
\end{eqnarray}
Here we use $\mu_{\lambda,\mathbb{C}}([z+wj]) = 2z_nw_n -\lambda_{n,\mathbb{C}}$ for any $n\in\mathbf{I}$.
\end{proof}

The next lemma may be well-known, but we show it for the reader's convenience.
\begin{lem}
Let $U$ be a complex manifold of dimension $n$ and $f_1,\cdots,f_n\in C^{\infty}(U)$. If $df_1\wedge\cdots\wedge df_n\in\Omega^{n,0}(U)$ and $df_1\wedge\cdots\wedge df_n|_p\neq 0$ for all $p\in U$, then $(f_1,\cdots,f_n):U\to \mathbb{C}^n$ is a local biholomorphism. \label{localchart}
\end{lem}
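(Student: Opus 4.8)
The plan is to show that the two hypotheses force each $f_i$ to be holomorphic, and then to invoke the holomorphic inverse function theorem. Decompose each differential into its bidegree components, $df_i = \partial f_i + \bar\partial f_i$ with $\partial f_i\in\Omega^{1,0}(U)$ and $\bar\partial f_i\in\Omega^{0,1}(U)$. Expanding the product $df_1\wedge\cdots\wedge df_n$ by bidegree, its $(n,0)$-component is exactly $\partial f_1\wedge\cdots\wedge\partial f_n$. The assumption $df_1\wedge\cdots\wedge df_n\in\Omega^{n,0}(U)$ therefore says both that the full wedge coincides with this $(n,0)$-component and that every component of bidegree $(n-q,q)$ with $q\ge 1$ vanishes identically. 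Combined with the nonvanishing hypothesis this gives $\partial f_1\wedge\cdots\wedge\partial f_n|_p\neq 0$ for all $p\in U$, so the forms $\partial f_1,\ldots,\partial f_n$ are pointwise linearly independent and constitute a $(1,0)$-coframe on $U$.

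The key step is to read off holomorphicity from the vanishing of the $(n-1,1)$-component. That component is
\begin{eqnarray}
\sum_{i=1}^n (-1)^{n-i}\, \partial f_1\wedge\cdots\wedge\widehat{\partial f_i}\wedge\cdots\wedge\partial f_n\wedge\bar\partial f_i,\nonumber
\end{eqnarray}
where the hat denotes omission. Choosing any $(0,1)$-coframe $\theta_1,\ldots,\theta_n$ and writing $\bar\partial f_i=\sum_j B_{ij}\theta_j$, I observe that, because $\partial f_1,\ldots,\partial f_n$ form a $(1,0)$-coframe, the forms $\partial f_1\wedge\cdots\wedge\widehat{\partial f_i}\wedge\cdots\wedge\partial f_n\wedge\theta_j$, as $i$ and $j$ range over $1,\ldots,n$, form a basis of $\Lambda^{n-1,1}T^*U$ at every point. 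Hence the vanishing of the displayed $(n-1,1)$-component forces all coefficients $B_{ij}$ to vanish, that is $\bar\partial f_i=0$ for each $i$, so that every $f_i$ is holomorphic on $U$.

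Once holomorphicity is established the remainder is routine: $df_i=\partial f_i$, so the holomorphic Jacobian of $F:=(f_1,\ldots,f_n)$ is nonsingular at every point precisely because $\partial f_1\wedge\cdots\wedge\partial f_n = df_1\wedge\cdots\wedge df_n$ is nowhere zero. The holomorphic inverse function theorem then provides, for each $p\in U$, a neighborhood on which $F$ restricts to a biholomorphism onto its image, which is exactly the assertion that $F$ is a local biholomorphism.

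I expect the only genuine point to be the middle paragraph, namely extracting $\bar\partial f_i=0$ from the vanishing of the mixed-bidegree part; the rest is bookkeeping of bidegrees together with a direct appeal to the inverse function theorem. The linear-algebra fact that the relevant wedges form a basis of $\Lambda^{n-1,1}$ is what makes the coefficients $B_{ij}$ separate cleanly, and it deserves to be stated carefully since it is the heart of the argument.
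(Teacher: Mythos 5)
Your proof is correct and follows essentially the same route as the paper's: identify the $(n,0)$-part of the wedge with $\partial f_1\wedge\cdots\wedge\partial f_n$, deduce that the $\partial f_i$ form a $(1,0)$-coframe, extract $\bar\partial f_i=0$ from the vanishing of the $(n-1,1)$-component, and conclude via the nonsingular Jacobian. The paper states the middle step without justification, whereas you spell out the basis argument for $\Lambda^{n-1,1}T^*U$ that makes the coefficients $B_{ij}$ vanish; this is a faithful elaboration rather than a different approach.
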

\begin{proof}
Since $df_1\wedge\cdots\wedge df_n$ is in $\Omega^{(n,0)}(U)$ and never be zero, we have
\begin{eqnarray}
df_1\wedge\cdots\wedge df_n = \partial f_1\wedge\cdots\wedge \partial f_n \neq 0.\nonumber
\end{eqnarray}
Therefore $\partial f_1,\cdots, \partial f_n$ becomes a basis of $(T^*_pU)^{(1,0)}$ for all $p\in U$. Since $(n-1,1)$-part of $df_1\wedge\cdots\wedge df_n$ vanishes,
we have $\overline{\partial} f_i = 0$. Then $(f_1,\cdots,f_n):U\to \mathbb{C}^n$ is locally biholomorphic since the Jacobian is everywhere invertible because $\partial f_1\wedge\cdots\wedge \partial f_n \neq 0$.
\end{proof}

From Lemma \ref{localchart}, we obtain a local holomorphic chart
\begin{eqnarray}
(f_{\lambda}^{\mathbf{o}_{\lambda}}, \mu_{\lambda,\mathbb{C}}):X(\lambda)^{\mathbf{o}_{\lambda}} \to \mathbb{C}^{\times}\times\mathbb{C}.\nonumber
\end{eqnarray}
To show that $(f_{\lambda}^{\mathbf{o}_{\lambda}}, \mu_{\lambda,\mathbb{C}})$ is biholomorphic, it suffices to show that the map is bijective. We will show it later.

Next we consider $\mathbb{C}^{\times}$-equivariant holomorphic functions over $X(\lambda)^s$ for an arbitrary $s\in\Gamma (\mathbb{C},\pi_{\lambda}(Y_{\lambda}))$.

Take a map $k:\mathbb{C}\to\mathbb{Z}$ such that ${\rm supp}(k) = k^{-1}(\mathbb{Z}\backslash\{ 0\})\subset\mathbb{C}$ is discrete and closed. Then denote by $\mathcal{A}(k)$ the subset of all meromorphic functions on $\mathbb{C}$, which consists of the meromorphic functions $\varphi$ who have the limits
\begin{eqnarray}
\lim_{w\to z}\varphi(w)(w-z)^{ - k(z)}\in\mathbb{C}^{\times}\nonumber
\end{eqnarray}
for all $z\in {\rm supp}(k)$. Then $\varphi$ is a $\mathbb{C}^{\times}$ valued holomorphic function on $k^{-1}(0)$.

Now we put
\begin{eqnarray}
f_{\lambda}^{\mathbf{o}_{\lambda},\varphi}(p):=f_{\lambda}^{\mathbf{o}_{\lambda}}(p)\cdot\varphi(\mu_{\lambda,\mathbb{C}}(p))\nonumber
\end{eqnarray}
for $s\in \Gamma (\mathbb{C},\pi_{\lambda}(Y_{\lambda}))$ and $\varphi\in\mathcal{A}(k_{\mathbf{o}_{\lambda},s})$, which is a $\mathbb{C}^{\times}$-valued holomorphic function on $X(\lambda)^{\mathbf{o}_{\lambda}} \cap X(\lambda)^s$.
\begin{prop}
Let $s\in \Gamma (\mathbb{C},\pi_{\lambda}(Y_{\lambda}))$ and $\varphi\in\mathcal{A}(k_{\mathbf{o}_{\lambda},s})$. Then $f_{\lambda}^{\mathbf{o}_{\lambda},\varphi}$ extends to $\mathbb{C}^{\times}$-equivariant holomorphic map $X(\lambda)^s \to \mathbb{C}^{\times}$. \label{5.2}
\end{prop}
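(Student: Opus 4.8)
The plan is to extend $f_{\lambda}^{\mathbf{o}_{\lambda},\varphi}$ across the locus where it is a priori undefined by Riemann's removable singularity theorem, after verifying that it is locally bounded together with its reciprocal. First I would pin down this locus. For $z_{0}\in\mathbb{C}$, the classes $\mathbf{o}_{\lambda}(z_{0})=\pi_{\lambda}(0,z_{0})$ and $s(z_{0})=\pi_{\lambda}(\tilde{s}(z_{0}),z_{0})$ coincide in ${\rm Im}\mathbb{H}/\sim_{\lambda}$ exactly when the segment joining $(0,z_{0})$ and $(\tilde{s}(z_{0}),z_{0})$ meets no point of $Z_{\lambda}$; here the hypothesis $\lambda_{n,\mathbb{R}}\neq0$ guarantees $(0,z_{0})\in Y_{\lambda}$, and since every $-\lambda_{n}$ with $-\lambda_{n,\mathbb{C}}=z_{0}$ lying strictly between $0$ and $\tilde{s}(z_{0})$ sits on one and the same side of $0$, all such indices contribute to $k_{\mathbf{o}_{\lambda},s}(z_{0})$ with a single sign. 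Consequently $\mathbf{o}_{\lambda}(z_{0})=s(z_{0})$ if and only if $k_{\mathbf{o}_{\lambda},s}(z_{0})=0$, so that $X(\lambda)^{s}\setminus X(\lambda)^{\mathbf{o}_{\lambda}}\subseteq\mu_{\lambda,\mathbb{C}}^{-1}({\rm supp}(k_{\mathbf{o}_{\lambda},s}))$. Because $\mathbb{C}^{\times}$ acts freely on $X(\lambda)^{*}\supseteq X(\lambda)^{s}$ and $\omega_{\lambda,\mathbb{C}}$ is nondegenerate, the fundamental vector field is paired nontrivially by $\omega_{\lambda,\mathbb{C}}$, whence $d\mu_{\lambda,\mathbb{C}}\neq0$ there; thus the right-hand side is a closed analytic hypersurface and $U:=X(\lambda)^{s}\setminus\mu_{\lambda,\mathbb{C}}^{-1}({\rm supp}(k_{\mathbf{o}_{\lambda},s}))$ is the complement of a hypersurface, on which $f_{\lambda}^{\mathbf{o}_{\lambda},\varphi}$ is already holomorphic and $\mathbb{C}^{\times}$-valued.

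Next I would analyse $f_{\lambda}^{\mathbf{o}_{\lambda},\varphi}$ near a point $p_{0}\in X(\lambda)^{s}$ with $z_{0}:=\mu_{\lambda,\mathbb{C}}(p_{0})\in{\rm supp}(k_{\mathbf{o}_{\lambda},s})$. Working in a local $G_{\lambda}^{\mathbb{C}}$-slice through a lift of $p_{0}$, the finitely many coordinates $z_{n},w_{n}$ with $n\in\mathbf{I}_{z_{0}}$ become honest holomorphic functions, and for these $2z_{n}w_{n}=\mu_{\lambda,\mathbb{C}}-z_{0}$. The first step is the analogue of Proposition~\ref{nonzero} for $s$: on $X(\lambda)^{s}$ one has $z_{n}\neq0$ for $n\in\mathbf{I}_{\lambda}^{+}(s)$ and $w_{n}\neq0$ for $n\in\mathbf{I}_{\lambda}^{-}(s)$. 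Combined with $z_{n}w_{n}(p_{0})=0$ for $n\in\mathbf{I}_{z_{0}}$, this shows that among the factors of $f_{\lambda}^{\mathbf{o}_{\lambda}}$ exactly the ``swapped'' ones vanish or blow up at $p_{0}$: the factor $z_{n}/\alpha_{n}$ vanishes at $p_0$ precisely for $n\in\mathbf{I}_{z_{0}}\cap\mathbf{I}_{\lambda}^{+}(\mathbf{o}_{\lambda})\cap\mathbf{I}_{\lambda}^{-}(s)$ and the factor $\beta_{n}/w_{n}$ blows up precisely for $n\in\mathbf{I}_{z_{0}}\cap\mathbf{I}_{\lambda}^{-}(\mathbf{o}_{\lambda})\cap\mathbf{I}_{\lambda}^{+}(s)$, these sets being finite. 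All remaining factors converge, locally uniformly near $p_{0}$, to a nowhere-zero holomorphic function, by the same Cauchy--Schwarz estimate and Lemma~\ref{convergence} used to define $f_{\lambda}^{\mathbf{o}_{\lambda}}$.

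Then I would carry out the order cancellation. For each swapped index I substitute, using $2z_{n}w_{n}=\mu_{\lambda,\mathbb{C}}-z_{0}$, either $z_{n}=(\mu_{\lambda,\mathbb{C}}-z_{0})/(2w_{n})$ with $w_{n}$ nonvanishing, or $1/w_{n}=2z_{n}/(\mu_{\lambda,\mathbb{C}}-z_{0})$ with $z_{n}$ nonvanishing. Hence the finite product of swapped factors equals $(\mu_{\lambda,\mathbb{C}}-z_{0})^{-k_{\mathbf{o}_{\lambda},s}(z_{0})}$ times a holomorphic nonvanishing function near $p_{0}$, the exponent being $-k_{\mathbf{o}_{\lambda},s}(z_{0})$ by the sign observation of the first paragraph, which equates $|k_{\mathbf{o}_{\lambda},s}(z_{0})|$ with the number of swapped factors. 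Since $\varphi\in\mathcal{A}(k_{\mathbf{o}_{\lambda},s})$ behaves like $(\mu_{\lambda,\mathbb{C}}-z_{0})^{k_{\mathbf{o}_{\lambda},s}(z_{0})}$ times a nonvanishing holomorphic function of $\mu_{\lambda,\mathbb{C}}$ near $z_{0}$, the two powers cancel in $f_{\lambda}^{\mathbf{o}_{\lambda},\varphi}=f_{\lambda}^{\mathbf{o}_{\lambda}}\cdot(\varphi\circ\mu_{\lambda,\mathbb{C}})$, leaving a product of holomorphic nowhere-zero factors. Thus $f_{\lambda}^{\mathbf{o}_{\lambda},\varphi}$ and its reciprocal are locally bounded near $p_{0}$, and Riemann's removable singularity theorem extends $f_{\lambda}^{\mathbf{o}_{\lambda},\varphi}$ to a holomorphic, nowhere-vanishing map $X(\lambda)^{s}\to\mathbb{C}^{\times}$. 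Finally, $\mathbb{C}^{\times}$-equivariance holds on the dense open set $U$ because $f_{\lambda}^{\mathbf{o}_{\lambda}}$ is equivariant and $\varphi\circ\mu_{\lambda,\mathbb{C}}$ is $\mathbb{C}^{\times}$-invariant; since $p\mapsto f_{\lambda}^{\mathbf{o}_{\lambda},\varphi}(pg)$ and $p\mapsto g\,f_{\lambda}^{\mathbf{o}_{\lambda},\varphi}(p)$ are holomorphic on $X(\lambda)^{s}$ and agree on $U$, they agree everywhere by the identity theorem.

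The step I expect to be the main obstacle is the local order computation, whose delicacy comes from the fact that the individual coordinates $z_{n},w_{n}$ are not $G_{\lambda}^{\mathbb{C}}$-invariant---only the products $z_{n}w_{n}$ and the full function $f_{\lambda}^{\mathbf{o}_{\lambda}}$ descend to $X(\lambda)$. Making the factorization into ``swapped'' and ``convergent'' parts rigorous therefore requires passing to a local holomorphic slice (or a careful gauge choice) and simultaneously controlling the locally uniform convergence of the infinite remaining subproduct, while the combinatorial sign observation---that all intervening points of $Z_{\lambda}$ lie on one side of $0$, so that $|k_{\mathbf{o}_{\lambda},s}(z_{0})|$ counts exactly the swapped factors---is what makes the cancellation against $\varphi$ exact.
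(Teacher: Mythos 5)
Your proposal is correct and follows essentially the same route as the paper: both isolate the finitely many ``swapped'' factors over each $z_0\in{\rm supp}(k_{\mathbf{o}_{\lambda},s})$, convert them via $2z_nw_n=\mu_{\lambda,\mathbb{C}}+\lambda_{n,\mathbb{C}}$ into the power $(\mu_{\lambda,\mathbb{C}}-z_0)^{-k_{\mathbf{o}_{\lambda},s}(z_0)}$ times a nonvanishing convergent remainder, and cancel this against the prescribed zero or pole order of $\varphi$. The only cosmetic difference is that you invoke Riemann's removable singularity theorem across the hypersurface $\mu_{\lambda,\mathbb{C}}^{-1}({\rm supp}(k_{\mathbf{o}_{\lambda},s}))$ together with the identity theorem for equivariance, whereas the paper exhibits the continuous (hence holomorphic) extension directly from the rewritten product.
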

\begin{proof}
Since $f_{\lambda}^{\mathbf{o}_{\lambda},\varphi}$ can be regarded as a $\mathbb{C}^{\times}$-equivariant holomorphic map $X(\lambda)^{\mathbf{o}_{\lambda}} \cap X(\lambda)^s \to \mathbb{C}^{\times}$, it suffices to show that $f_{\lambda}^{\mathbf{o}_{\lambda},\varphi}$ is extended to $X(\lambda)^s$ continuously.

Let $[z+wj]\in X_{GIT}(\lambda)^{\mathbf{o}_{\lambda}} \cap X_{GIT}(\lambda)^s$.
We fix $m\in\mathbf{I}$ arbitrarily, and put $\hat{Z}_{A}:= \prod_{n\in A}\frac{z_n}{\alpha_n},\ \hat{W}_{A}:= \prod_{n\in A}\frac{w_n}{\beta_n}$ for $A\subset \mathbf{I}$.
First of all, the following conditions are all equivalent for all $m\in\mathbf{I}$ and $s_1,s_2\in \Gamma (\mathbb{C},\pi_{\lambda}(Y_{\lambda}))$; $(i)\ s_1(-\lambda_{m,\mathbb{C}}) \preceq s_2(-\lambda_{m,\mathbb{C}})$, $(ii)$ $\mathbf{I}_{\lambda}^+(s_1)\cap\mathbf{I}_{-\lambda_{m,\mathbb{C}}} \subset \mathbf{I}_{\lambda}^+(s_2)\cap\mathbf{I}_{-\lambda_{m,\mathbb{C}}}$, $(iii)$ $\mathbf{I}_{\lambda}^-(s_2)\cap\mathbf{I}_{-\lambda_{m,\mathbb{C}}} \subset \mathbf{I}_{\lambda}^-(s_1)\cap\mathbf{I}_{-\lambda_{m,\mathbb{C}}}$.

Assume $s(-\lambda_{m,\mathbb{C}}) \preceq \mathbf{o}_{\lambda}(-\lambda_{m,\mathbb{C}})$. Then we can deduce
\begin{eqnarray}
f_{\lambda}^{\mathbf{o}_{\lambda}}([z+wj])
 &=& \hat{Z}_{\mathbf{I}_{\lambda}^+(\mathbf{o}_{\lambda})} \hat{W}_{\mathbf{I}_{\lambda}^-(\mathbf{o}_{\lambda})}^{-1}\nonumber\\
 &=& \frac{ \hat{Z}_{\mathbf{I}_{\lambda}^+(\mathbf{o}_{\lambda})\cap\mathbf{I}_{-\lambda_{m,\mathbb{C}}}}
 \hat{Z}_{\mathbf{I}_{\lambda}^+(\mathbf{o}_{\lambda})\backslash\mathbf{I}_{-\lambda_{m,\mathbb{C}}}} }
{ \hat{W}_{\mathbf{I}_{\lambda}^-(\mathbf{o}_{\lambda})\cap\mathbf{I}_{-\lambda_{m,\mathbb{C}}}}
 \hat{W}_{\mathbf{I}_{\lambda}^-(\mathbf{o}_{\lambda})\backslash\mathbf{I}_{-\lambda_{m,\mathbb{C}}}} }
 \nonumber\\
 &=& \frac{ \hat{Z}_{ ( \mathbf{I}_{\lambda}^+(\mathbf{o}_{\lambda}) \backslash \mathbf{I}_{\lambda}^+(s) ) \cap\mathbf{I}_{-\lambda_{m,\mathbb{C}}} }
 \hat{Z}_{\mathbf{I}_{\lambda}^+(s)\cap\mathbf{I}_{-\lambda_{m,\mathbb{C}}}}
 \hat{Z}_{\mathbf{I}_{\lambda}^+(\mathbf{o}_{\lambda}) \backslash \mathbf{I}_{-\lambda_{m,\mathbb{C}}}} }
{ \hat{W}_{( \mathbf{I}_{\lambda}^-(s) \backslash \mathbf{I}_{\lambda}^-(\mathbf{o}_{\lambda}) )\cap\mathbf{I}_{-\lambda_{m,\mathbb{C}}} }^{-1}
\hat{W}_{\mathbf{I}_{\lambda}^-(s)\cap\mathbf{I}_{-\lambda_{m,\mathbb{C}}}}
 \hat{W}_{\mathbf{I}_{\lambda}^-(\mathbf{o}_{\lambda})\backslash\mathbf{I}_{-\lambda_{m,\mathbb{C}}}} }
 \nonumber\\
 &=& \frac{ \hat{Z}_{\mathbf{I}_{\lambda}^+(s)\cap\mathbf{I}_{-\lambda_{m,\mathbb{C}}}}
 \hat{Z}_{\mathbf{I}_{\lambda}^+(\mathbf{o}_{\lambda}) \backslash \mathbf{I}_{-\lambda_{m,\mathbb{C}}}} }
{ \hat{W}_{\mathbf{I}_{\lambda}^-(s)\cap\mathbf{I}_{-\lambda_{m,\mathbb{C}}}}
 \hat{W}_{\mathbf{I}_{\lambda}^-(\mathbf{o}_{\lambda})\backslash\mathbf{I}_{-\lambda_{m,\mathbb{C}}}} }
 \times
 \prod_{n\in ( \mathbf{I}_{\lambda}^+(\mathbf{o}_{\lambda}) \backslash \mathbf{I}_{\lambda}^+(s) ) \cap\mathbf{I}_{-\lambda_{m,\mathbb{C}}}} \frac{z_nw_n}{\alpha_n\beta_n}.\nonumber
\end{eqnarray}
Here we use $\mathbf{I}_{\lambda}^+(\mathbf{o}_{\lambda}) \backslash \mathbf{I}_{\lambda}^+(s) = \mathbf{I}_{\lambda}^-(s) \backslash \mathbf{I}_{\lambda}^-(\mathbf{o}_{\lambda})$ for the last equality.  Now we put $\zeta_{\mathbb{C}} = \mu_{\lambda,\mathbb{C}}([z+wj])$. Then we have $\zeta_{\mathbb{C}} = 2z_nw_n - \lambda_{n,\mathbb{C}}$ and $2\alpha_n\beta_n = \lambda_{n,\mathbb{C}}$, hence
\begin{eqnarray}
\frac{z_nw_n}{\alpha_n\beta_n} = \frac{\zeta_{\mathbb{C}} + \lambda_{m,\mathbb{C}} }{ \lambda_{m,\mathbb{C}} }\nonumber
\end{eqnarray}
if $n\in \mathbf{I}_{-\lambda_{m,\mathbb{C}}}$.
Thus we obtain
\begin{eqnarray}
f_{\lambda}^{\mathbf{o}_{\lambda}}([z+wj]) = \Big( \frac{\zeta_{\mathbb{C}} + \lambda_{m,\mathbb{C}}}{2}\Big)^{ - k_{\mathbf{o}_{\lambda},s}(- \lambda_{m,\mathbb{C}}) } 
\frac{ \hat{Z}_{\mathbf{I}_{\lambda}^+(s)\cap\mathbf{I}_{-\lambda_{m,\mathbb{C}}}}
 \hat{Z}_{\mathbf{I}_{\lambda}^+(\mathbf{o}_{\lambda}) \backslash \mathbf{I}_{-\lambda_{m,\mathbb{C}}}} }
{ \hat{W}_{\mathbf{I}_{\lambda}^-(s)\cap\mathbf{I}_{-\lambda_{m,\mathbb{C}}}}
 \hat{W}_{\mathbf{I}_{\lambda}^-(\mathbf{o}_{\lambda})\backslash\mathbf{I}_{-\lambda_{m,\mathbb{C}}}} }.
 \nonumber
\end{eqnarray}
Since
\begin{eqnarray}
\varphi(\zeta_{\mathbb{C}})\Big( \frac{\zeta_{\mathbb{C}} + \lambda_{m,\mathbb{C}}}{2}\Big)^{ - k_{\mathbf{o}_{\lambda},s}(- \lambda_{m,\mathbb{C}})}\nonumber
\end{eqnarray}
is $\mathbb{C}^{\times}$-valued holomorphic on the neighborhood of $\zeta_{\mathbb{C}} = - \lambda_{m,\mathbb{C}}$, and
\begin{eqnarray}
\frac{ \hat{Z}_{\mathbf{I}_{\lambda}^+(s)\cap\mathbf{I}_{-\lambda_{m,\mathbb{C}}}}
 \hat{Z}_{\mathbf{I}_{\lambda}^+(\mathbf{o}_{\lambda}) \backslash \mathbf{I}_{-\lambda_{m,\mathbb{C}}}} }
{ \hat{W}_{\mathbf{I}_{\lambda}^-(s)\cap\mathbf{I}_{-\lambda_{m,\mathbb{C}}}}
 \hat{W}_{\mathbf{I}_{\lambda}^-(\mathbf{o}_{\lambda})\backslash\mathbf{I}_{-\lambda_{m,\mathbb{C}}}} }\nonumber
\end{eqnarray}
is also $\mathbb{C}^{\times}$-valued at $\zeta_{\mathbb{C}} = - \lambda_{m,\mathbb{C}}$, then $f_{\lambda}^{\mathbf{o}_{\lambda},\varphi}$ can be extended continuously to $\mu_{\lambda}^{-1}(\pi_{\lambda}^{-1}(s(-\lambda_{m,\mathbb{C}})))$ for each $m\in\mathbf{I}$, accordingly extended to $X(\lambda)^s$.
\end{proof}

Now $(f_{\lambda}^{\mathbf{o}_{\lambda},\varphi},\mu_{\lambda,\mathbb{C}}): X(\lambda)^s\to\mathbb{C}^{\times}\times\mathbb{C}$ is locally biholomorphic since we have
\begin{eqnarray}
d f_{\lambda}^{\mathbf{o}_{\lambda},\varphi}\wedge d\mu_{\lambda,\mathbb{C}} = 2 f_{\lambda}^{\mathbf{o}_{\lambda},\varphi}\omega_{\lambda,\mathbb{C}}\nonumber
\end{eqnarray}
on $X(\lambda)^s$. The above equation follows from Proposition \ref{darboux} and the definition of $f_{\lambda}^{\mathbf{o}_{\lambda},\varphi}$.
\begin{prop}
Let $s\in \Gamma (\mathbb{C},\pi_{\lambda}(Y_{\lambda}))$ and $\varphi\in\mathcal{A}(k_{\mathbf{o}_{\lambda},s})$. Then
\begin{eqnarray}
(f_{\lambda}^{\mathbf{o}_{\lambda},\varphi},\mu_{\lambda,\mathbb{C}}): X(\lambda)^s\to\mathbb{C}^{\times}\times\mathbb{C}\nonumber
\end{eqnarray}
is biholomorphic.\label{5.4}
\end{prop}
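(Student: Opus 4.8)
The plan is to prove that $(f_{\lambda}^{\mathbf{o}_{\lambda},\varphi},\mu_{\lambda,\mathbb{C}})$ is a biholomorphism by establishing bijectivity, since local biholomorphy onto $\mathbb{C}^{\times}\times\mathbb{C}$ has already been secured from the nonvanishing of $df_{\lambda}^{\mathbf{o}_{\lambda},\varphi}\wedge d\mu_{\lambda,\mathbb{C}} = 2f_{\lambda}^{\mathbf{o}_{\lambda},\varphi}\omega_{\lambda,\mathbb{C}}$ together with Lemma~\ref{localchart}. A bijective local biholomorphism is automatically a global biholomorphism, so the entire content reduces to checking that the map is one-to-one and onto.

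The key organizing idea is to exploit the $\mathbb{C}^{\times}$-equivariance of $f_{\lambda}^{\mathbf{o}_{\lambda},\varphi}$ together with the $\mathbb{C}^{\times}$-invariance of $\mu_{\lambda,\mathbb{C}}$. First I would fix a target point $(c,\zeta_{\mathbb{C}})\in\mathbb{C}^{\times}\times\mathbb{C}$ and reduce to finding a unique $\mathbb{C}^{\times}$-orbit in $X(\lambda)^s$ on which these two functions take the prescribed values. Since $\mu_{\lambda,\mathbb{C}}$ descends to $[\mu_{\lambda,\mathbb{C}}] = [{\rm pr}_{\mathbb{C}}]_{\lambda}\circ[\mu_{\lambda}]$ and factors through $X(\lambda)/\mathbb{C}^{\times}\cong{\rm Im}\mathbb{H}/\sim_{\lambda}$ via Proposition~\ref{homeo}, the fiber $\mu_{\lambda,\mathbb{C}}^{-1}(\zeta_{\mathbb{C}})\cap X(\lambda)^s$ consists of points lying over the single class $s(\zeta_{\mathbb{C}})\in\pi_{\lambda}(Y_{\lambda})$. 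By Proposition~\ref{homeo} and the fact that $\mathbb{C}^{\times}$ acts freely on $X(\lambda)^*$, this fiber is exactly one $\mathbb{C}^{\times}$-orbit. Restricting $f_{\lambda}^{\mathbf{o}_{\lambda},\varphi}$ to such an orbit, its $\mathbb{C}^{\times}$-equivariance $f_{\lambda}^{\mathbf{o}_{\lambda},\varphi}(pg) = gf_{\lambda}^{\mathbf{o}_{\lambda},\varphi}(p)$ shows it gives a bijection of the orbit onto $\mathbb{C}^{\times}$, provided its value at one point of the orbit is a nonzero complex number. This yields both injectivity and surjectivity simultaneously.

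The point requiring care, and what I expect to be the main obstacle, is the behavior at the ``boundary'' classes in $Y_{\lambda}$ where $\zeta_{\mathbb{C}} = -\lambda_{m,\mathbb{C}}$ for some $m\in\mathbf{I}$, that is, over the discrete set on which $k_{\mathbf{o}_{\lambda},s}$ is supported. On these fibers the factorization of $f_{\lambda}^{\mathbf{o}_{\lambda}}$ used in Proposition~\ref{5.2} degenerates, and one must verify that the continuous extension constructed there still takes nonzero values and still sweeps out all of $\mathbb{C}^{\times}$ along each orbit. I would handle this by invoking the explicit local expression derived in the proof of Proposition~\ref{5.2}: near $\zeta_{\mathbb{C}} = -\lambda_{m,\mathbb{C}}$ the extended function equals $\varphi(\zeta_{\mathbb{C}})(\tfrac{\zeta_{\mathbb{C}}+\lambda_{m,\mathbb{C}}}{2})^{-k_{\mathbf{o}_{\lambda},s}(-\lambda_{m,\mathbb{C}})}$ times a manifestly $\mathbb{C}^{\times}$-valued product of $\hat{Z}$ and $\hat{W}$ factors, and the condition $\varphi\in\mathcal{A}(k_{\mathbf{o}_{\lambda},s})$ guarantees precisely that this combination remains nonvanishing in $\mathbb{C}^{\times}$. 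With the nonvanishing established on every orbit, equivariance completes the argument.

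Finally I would assemble these observations: surjectivity follows because every $(c,\zeta_{\mathbb{C}})$ is attained by scaling within the unique orbit over $s(\zeta_{\mathbb{C}})$, and injectivity follows because two points of $X(\lambda)^s$ with the same $\mu_{\lambda,\mathbb{C}}$-value lie in a common $\mathbb{C}^{\times}$-orbit on which $f_{\lambda}^{\mathbf{o}_{\lambda},\varphi}$ is injective. Combining bijectivity with the already-proven local biholomorphy, and using that $X(\lambda)^s$ and $\mathbb{C}^{\times}\times\mathbb{C}$ are both complex $2$-manifolds, gives that $(f_{\lambda}^{\mathbf{o}_{\lambda},\varphi},\mu_{\lambda,\mathbb{C}})$ is a biholomorphism.
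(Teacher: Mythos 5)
Your proposal is correct and follows essentially the same route as the paper: local biholomorphy is already in hand, and bijectivity is obtained by identifying each fiber of $\mu_{\lambda,\mathbb{C}}$ in $X(\lambda)^s$ with a single $\mathbb{C}^{\times}$-orbit via Proposition~\ref{homeo} and then using the $\mathbb{C}^{\times}$-equivariance and $\mathbb{C}^{\times}$-valuedness of $f_{\lambda}^{\mathbf{o}_{\lambda},\varphi}$. Your extra discussion of nonvanishing over the fibers where $k_{\mathbf{o}_{\lambda},s}$ is supported is harmless but already guaranteed by Proposition~\ref{5.2}, which asserts the extension is $\mathbb{C}^{\times}$-valued on all of $X(\lambda)^s$.
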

\begin{proof}
Since we have shown that the map is locally biholomorphic, it suffices to show that it is bijective.

First of all we show the injectivity. Let $[z+wj],[z'+w'j]\in X_{GIT}(\lambda)^s$ satisfy
\begin{eqnarray}
\mu_{\lambda,\mathbb{C}}([z+wj]) &=& \mu_{\lambda,\mathbb{C}}([z'+w'j]),\label{cpxmm}\\
f_{\lambda}^{\mathbf{o}_{\lambda},\varphi}([z+wj]) &=& f_{\lambda}^{\mathbf{o}_{\lambda},\varphi}([z'+w'j]).\label{c*equiv}
\end{eqnarray}
Then (\ref{cpxmm}) gives that $\pi_{\lambda}(\mu_{\lambda}([z+wj])) = \pi_{\lambda}(\mu_{\lambda}([z'+w'j]))$. From Proposition \ref{homeo}, there exists $g\in\mathbb{C}^{\times}$ such that $[z+wj]g = [z'+w'j]$. Therefore we have $[z+wj] = [z'+w'j]$ since $f_{\lambda}^{\mathbf{o}_{\lambda},\varphi}$ is $\mathbb{C}^{\times}$-equivariant and $\mathbb{C}^{\times}$-valued,which gives $g=1$.

Next we show the surjectivity. Take $(p,q)\in\mathbb{C}^{\times}\times\mathbb{C}$ arbitrarily. Fix $[z+wj]\in\mu_{\lambda}^{-1}(\pi_{\lambda}^{-1}(s(q)))$. If we put $g_0:=f_{\lambda}^{\mathbf{o}_{\lambda},\varphi}([z+wj])$, then $[z+wj]g_0^{-1}p\in X_{GIT}(\lambda)^s$ satisfies $f_{\lambda}^{\mathbf{o}_{\lambda},\varphi}([z+wj]g_0^{-1}p) = p$ and $\mu_{\lambda,\mathbb{C}}([z+wj]g_0^{-1}p) = \mu_{\lambda,\mathbb{C}}([z+wj]) = q$.
\end{proof}

For all $s\in \Gamma (\mathbb{C},\pi_{\lambda}(Y_{\lambda}))$, $\mathcal{A}(k_{\mathbf{o}_{\lambda},s})$ is not empty from Weierstrass Theorem. If we put $\mathcal{G}:=\{f:\mathbb{C}\to\mathbb{C}^{\times}\ {\rm is\ holomorphic}\} = \Gamma(\mathbb{C},\mathcal{O}_{\mathbb{C}}^{\times})$, then $\mathcal{G}$ acts on $\mathcal{A}(k_{\mathbf{o}_{\lambda},s})$ transitively and freely.

Next we consider the gluing.
Take $s_1, s_2\in \Gamma (\mathbb{C},\pi_{\lambda}(Y_{\lambda}))$, and $\varphi_i\in \mathcal{A}(k_{\mathbf{o}_{\lambda},s_i})$ for $i=1,2$. We put $F_{\lambda}^{\varphi}:=(f_{\lambda}^{\mathbf{o}_{\lambda},\varphi},\mu_{\lambda,\mathbb{C}})$ and define
\begin{eqnarray}
\psi_{\lambda}^{\varphi_2,\varphi_1}:F_{\lambda}^{\varphi_1}(X(\lambda)^{s_1}\cap X(\lambda)^{s_2})\to F_{\lambda}^{\varphi_2}(X(\lambda)^{s_1}\cap X(\lambda)^{s_2})\nonumber
\end{eqnarray}
by $\psi_{\lambda}^{\varphi_2,\varphi_1} := F_{\lambda}^{\varphi_2}\circ (F_{\lambda}^{\varphi_1})^{-1}$. Now we take $p\in\mathbb{C}^{\times}$ and $q\in\mathbb{C}$ to be $(p,q)\in F_{\lambda}^{\varphi_1}(X(\lambda)^{s_1}\cap X(\lambda)^{s_2})$. Since we have
\begin{eqnarray}
F_{\lambda}^{\varphi_2} &=& (f_{\lambda}^{\mathbf{o}_{\lambda},\varphi_2},\mu_{\lambda,\mathbb{C}})\nonumber\\
&=& (f_{\lambda}^{\mathbf{o}_{\lambda}}\cdot\varphi_1(\mu_{\lambda,\mathbb{C}})\cdot\frac{\varphi_2(\mu_{\lambda,\mathbb{C}})}{\varphi_1(\mu_{\lambda,\mathbb{C}})},\mu_{\lambda,\mathbb{C}})\nonumber\\
&=& (f_{\lambda}^{\mathbf{o}_{\lambda},\varphi_1}\cdot\frac{\varphi_2(\mu_{\lambda,\mathbb{C}})}{\varphi_1(\mu_{\lambda,\mathbb{C}})},\mu_{\lambda,\mathbb{C}}),\nonumber
\end{eqnarray}
then we can write as
\begin{eqnarray}
\psi_{\lambda}^{\varphi_2,\varphi_1}(p,q) = (p\cdot\frac{\varphi_2(q)}{\varphi_1(q)}, q).\label{gluing formula}
\end{eqnarray}
Consequently, we have
\begin{eqnarray}
F_{\lambda}^{\varphi_1}(X(\lambda)^{s_1}\cap X(\lambda)^{s_2}) = F_{\lambda}^{\varphi_2}(X(\lambda)^{s_1}\cap X(\lambda)^{s_2}) = \mathbb{C}^{\times} \times k_{s_1,s_2}^{-1}(\{ 0\}).\nonumber
\end{eqnarray}

\subsection{The construction of biholomorphic maps\label{bihol}}
Recall that we have put $X(\lambda)^* = \mu_{\lambda}^{-1}(Y_{\lambda})$. In this section we construct biholomorphisms between $X(\lambda)^*$ and $X(\lambda')^*$ for $\lambda,\lambda'\in ({\rm Im}\mathbb{H})_0^\mathbf{I}$, which satisfy appropriate conditions. First of we describe these conditions for $\lambda,\lambda'\in ({\rm Im}\mathbb{H})_0^\mathbf{I}$.

Let $\lambda,\lambda'\in ({\rm Im}\mathbb{H})_0^\mathbf{I}$ be generic.
Then we denote by ${\rm Isom}(\lambda,\lambda')$ the set which consists of all homeomorphisms $\mathbf{h}:{\rm Im}\mathbb{H}/\sim_{\lambda} \to {\rm Im}\mathbb{H}/\sim_{\lambda'}$ preserving partial orders $\preceq$,
which satisfies $[{\rm pr}_{\mathbb{C}}]_{\lambda'}\circ\mathbf{h} = [{\rm pr}_{\mathbb{C}}]_{\lambda}$.
We can construct a $\mathbb{C}^{\times}$-equivariant biholomorphism from $X(\lambda)^*$ and $X(\lambda')^*$ which preserves holomorphic symplectic forms $\omega_{\lambda,\mathbb{C}}$ and $\omega_{\lambda',\mathbb{C}}$ as follows.

Let $\lambda,\lambda'\in ({\rm Im}\mathbb{H})_0^\mathbf{I}$ be generic, and $\mathbf{h}\in {\rm Isom}(\lambda,\lambda')$.
Then $\mathbf{h}$ induces a one-to-one correspondence $\Gamma (\mathbb{C},\pi_{\lambda}(Y_{\lambda}))\to\Gamma (\mathbb{C},\pi_{\lambda'}(Y_{\lambda'}))$ which we use the same symbol $\mathbf{h}:\Gamma (\mathbb{C},\pi_{\lambda}(Y_{\lambda})) \to \Gamma (\mathbb{C},\pi_{\lambda'}(Y_{\lambda'}))$. We may assume that $ \lambda_{n,\mathbb{R}} \neq 0$ and $ \lambda'_{n,\mathbb{R}} \neq 0$ for all $n\in\mathbf{I}$ without loss of generality. Then ${\rm supp} (k_{\mathbf{o}_{\lambda'},\mathbf{h}(\mathbf{o}_{\lambda})})$ becomes discrete and closed from Proposition \ref{4.11}, accordingly we can take $\varphi_0\in \mathcal{A}(k_{\mathbf{o}_{\lambda'},\mathbf{h}(\mathbf{o}_{\lambda})})$ since $\mathcal{A}(k_{\mathbf{o}_{\lambda'},\mathbf{h}(\mathbf{o}_{\lambda})})$ is not empty by Weierstrass Theorem.

To construct biholomorphisms from $X(\lambda)^*$ to $X(\lambda')^*$, it suffices to construct biholomorphisms from $X(\lambda)^s$ to $X(\lambda')^{\mathbf{h}(s)}$ and glue them since a family of open sets $\{ X(\lambda)^s\}_{s\in\Gamma (\mathbb{C},\pi_{\lambda}(Y_{\lambda}))}$ is an open covering of $X(\lambda)^*$.

Recall that $F_{\lambda}^{\varphi}:X(\lambda)^s \to \mathbb{C}^{\times} \times \mathbb{C}$ is a biholomorphism for each $\varphi\in \mathcal{A}(k_{\mathbf{o}_{\lambda},s})$.
Now we have $k_{\mathbf{o}_{\lambda},s} = k_{\mathbf{h}(\mathbf{o}_{\lambda}),\mathbf{h}(s)}$ since $\mathbf{h}$ preserves the partial orders, hence $\varphi\varphi_0$ is an element of $\mathcal{A}(k_{\mathbf{o}_{\lambda'},\mathbf{h}(s)})$.
Consequently, we have a biholomorphism $F_{\lambda'}^{\varphi \varphi_0}:X(\lambda')^{\mathbf{h}(s)} \to \mathbb{C}^{\times} \times \mathbb{C}$, then a biholomorphism $H_{s,\varphi}(\mathbf{h},\varphi_0):X(\lambda)^s \to X(\lambda')^{\mathbf{h}(s)}$ is obtained by
\begin{eqnarray}
H_{s,\varphi}(\mathbf{h},\varphi_0) := (F_{\lambda'}^{\varphi \varphi_0})^{-1}\circ F_{\lambda}^{\varphi}\nonumber
\end{eqnarray}
for $s\in\Gamma (\mathbb{C},\pi_{\lambda}(Y_{\lambda}))$, $\varphi_0\in \mathcal{A}(k_{\mathbf{o}_{\lambda'},\mathbf{h}(\mathbf{o}_{\lambda})})$ and $\varphi\in \mathcal{A}(k_{\mathbf{o}_{\lambda},s})$.
\begin{prop}
Let $\lambda,\lambda'\in ({\rm Im}\mathbb{H})_0^\mathbf{I}$ be generic. Then $H_{s_1,\varphi_1}(\mathbf{h},\varphi_0)$ and $H_{s_2,\varphi_2}(\mathbf{h},\varphi_0)$ are glued on $X(\lambda)^{s_1}\cap X(\lambda)^{s_2}$ for all $\mathbf{h}\in {\rm Isom}(\lambda,\lambda')$, $s_1,s_2\in\Gamma (\mathbb{C},\pi_{\lambda}(Y_{\lambda}))$, $\varphi_0\in \mathcal{A}(k_{\mathbf{o}_{\lambda'},\mathbf{h}(\mathbf{o}_{\lambda})})$ and $\varphi_i\in \mathcal{A}(k_{\mathbf{o}_{\lambda},s_i})$.
\end{prop}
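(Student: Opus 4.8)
The plan is to reduce the assertion to the explicit transition formula (\ref{gluing formula}) obtained in Section \ref{equiv hol func}, and to exploit the fact that the auxiliary factor $\varphi_0$ cancels once one passes to transition ratios. Since both $H_{s_1,\varphi_1}(\mathbf{h},\varphi_0)$ and $H_{s_2,\varphi_2}(\mathbf{h},\varphi_0)$ are biholomorphisms onto open subsets of $X(\lambda')^*$, it suffices to check that they coincide as maps on $X(\lambda)^{s_1}\cap X(\lambda)^{s_2}$; I would do this by composing with the reference chart $F_{\lambda'}^{\varphi_1\varphi_0}$ and comparing the resulting expressions in the coordinates on $\mathbb{C}^{\times}\times\mathbb{C}$.

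First I would record the two transition maps in play. By (\ref{gluing formula}) applied to $\lambda$, on the overlap one has $F_{\lambda}^{\varphi_2} = \psi_{\lambda}^{\varphi_2,\varphi_1}\circ F_{\lambda}^{\varphi_1}$ with $\psi_{\lambda}^{\varphi_2,\varphi_1}(p,q) = (p\,\varphi_2(q)/\varphi_1(q),q)$. Applying the same formula to $\lambda'$, with the sections $\mathbf{h}(s_1),\mathbf{h}(s_2)$ and the functions $\varphi_1\varphi_0,\varphi_2\varphi_0\in\mathcal{A}(k_{\mathbf{o}_{\lambda'},\mathbf{h}(s_i)})$, gives $F_{\lambda'}^{\varphi_2\varphi_0} = \psi_{\lambda'}^{\varphi_2\varphi_0,\varphi_1\varphi_0}\circ F_{\lambda'}^{\varphi_1\varphi_0}$ with $\psi_{\lambda'}^{\varphi_2\varphi_0,\varphi_1\varphi_0}(p,q) = (p\,\varphi_2(q)\varphi_0(q)/(\varphi_1(q)\varphi_0(q)),q)$. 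The decisive point is that $\varphi_0$ cancels in the ratio, so that
\[
\psi_{\lambda'}^{\varphi_2\varphi_0,\varphi_1\varphi_0} = \psi_{\lambda}^{\varphi_2,\varphi_1}.
\]
Substituting these two relations into $H_{s_2,\varphi_2}(\mathbf{h},\varphi_0) = (F_{\lambda'}^{\varphi_2\varphi_0})^{-1}\circ F_{\lambda}^{\varphi_2}$, I obtain
\[
H_{s_2,\varphi_2}(\mathbf{h},\varphi_0) = (F_{\lambda'}^{\varphi_1\varphi_0})^{-1}\circ(\psi_{\lambda'}^{\varphi_2\varphi_0,\varphi_1\varphi_0})^{-1}\circ\psi_{\lambda}^{\varphi_2,\varphi_1}\circ F_{\lambda}^{\varphi_1},
\]
and the two middle factors cancel, leaving exactly $(F_{\lambda'}^{\varphi_1\varphi_0})^{-1}\circ F_{\lambda}^{\varphi_1} = H_{s_1,\varphi_1}(\mathbf{h},\varphi_0)$ on the overlap.

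The step that requires genuine care is the bookkeeping of domains, which is where I expect the only real subtlety to lie. One must verify that $F_{\lambda}^{\varphi_1}$ carries $X(\lambda)^{s_1}\cap X(\lambda)^{s_2}$ onto $\mathbb{C}^{\times}\times k_{s_1,s_2}^{-1}(\{0\})$, so that $\psi_{\lambda}^{\varphi_2,\varphi_1}$ is defined exactly there (this is the content of the image identity following (\ref{gluing formula})), and that the target transition map $\psi_{\lambda'}^{\varphi_2\varphi_0,\varphi_1\varphi_0}$ is defined on $\mathbb{C}^{\times}\times k_{\mathbf{h}(s_1),\mathbf{h}(s_2)}^{-1}(\{0\})$. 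For the cancellation above to make sense these two bases must agree, and they do because $\mathbf{h}$ preserves $\preceq$, whence $k_{s_1,s_2} = k_{\mathbf{h}(s_1),\mathbf{h}(s_2)}$. Granting this equality of counting functions, the algebraic cancellation is forced and the two biholomorphisms glue.
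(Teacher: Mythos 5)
Your proposal is correct and follows essentially the same route as the paper: both arguments rewrite one of the maps $H_{s_i,\varphi_i}(\mathbf{h},\varphi_0)$ via the transition maps $\psi_{\lambda}^{\varphi_2,\varphi_1}$ and $\psi_{\lambda'}^{\varphi_2\varphi_0,\varphi_1\varphi_0}$ from the formula (\ref{gluing formula}) and observe that the factor $\varphi_0$ cancels in the ratio, so the two middle transition maps compose to the identity on the overlap. Your extra remarks on the domain bookkeeping (that $F_{\lambda}^{\varphi_1}(U)=\mathbb{C}^{\times}\times k_{s_1,s_2}^{-1}(\{0\})$ and $k_{s_1,s_2}=k_{\mathbf{h}(s_1),\mathbf{h}(s_2)}$ because $\mathbf{h}$ preserves $\preceq$) are a sound, slightly more explicit treatment of a point the paper leaves implicit, but do not change the argument.
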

\begin{proof}
Recall that we have $F_{\lambda}^{\varphi_1} = \psi_{\lambda}^{\varphi_1,\varphi_2}\circ F_{\lambda}^{\varphi_2}$ on $U := X(\lambda)^{s_1}\cap X(\lambda)^{s_2}$ and $F_{\lambda}^{\varphi_1}(U) = F_{\lambda}^{\varphi_2}(U)$. By the definition of $H_{s_1,\varphi_1}(\mathbf{h},\varphi_0)$, we can see
\begin{eqnarray}
H_{s_1,\varphi_1}(\mathbf{h},\varphi_0)|_{U} &=& (F_{\lambda'}^{\varphi_1 \varphi_0})^{-1}\circ F_{\lambda}^{\varphi_1} |_{U}\nonumber\\
&=& (\psi_{\lambda'}^{\varphi_1\varphi_0,\varphi_2\varphi_0}\circ F_{\lambda'}^{\varphi_2 \varphi_0})^{-1}\circ \psi_{\lambda}^{\varphi_1,\varphi_2}\circ F_{\lambda}^{\varphi_2} |_{U}\nonumber\\
&=& (F_{\lambda'}^{\varphi_2 \varphi_0})^{-1}\circ (\psi_{\lambda'}^{\varphi_1\varphi_0,\varphi_2\varphi_0})^{-1}\circ \psi_{\lambda}^{\varphi_1,\varphi_2}\circ F_{\lambda}^{\varphi_2} |_{U}.\nonumber
\end{eqnarray}
For each $(p,q)\in F_{\lambda}^{\varphi_1}(U) = F_{\lambda}^{\varphi_2}(U)$, we have
\begin{eqnarray}
(\psi_{\lambda'}^{\varphi_1\varphi_0,\varphi_2\varphi_0})^{-1}\circ \psi_{\lambda}^{\varphi_1,\varphi_2}(p,q) &=& (\psi_{\lambda'}^{\varphi_1\varphi_0,\varphi_2\varphi_0})^{-1} \Big( p \cdot \frac{\varphi_1(q)}{\varphi_2(q)},\ q\Big)\nonumber\\
&=& \Big( p \cdot \frac{\varphi_1(q)}{\varphi_2(q)} \cdot \frac{\varphi_2(q)\varphi_0(q)}{\varphi_1(q)\varphi_0(q)},\ q \Big) = (p,q),\nonumber
\end{eqnarray}
which gives
\begin{eqnarray}
H_{s_1,\varphi_1}(\mathbf{h},\varphi_0)|_{U}
&=& (F_{\lambda'}^{\varphi_2 \varphi_0})^{-1}\circ {\rm id }_{F_{\lambda}^{\varphi_2}(U)}\circ F_{\lambda}^{\varphi_2} |_{U}\nonumber\\
&=& H_{s_2,\varphi_2}(\mathbf{h},\varphi_0)|_{U}.\nonumber
\end{eqnarray}
\end{proof}

From the above proposition, we have a biholomorphism
\begin{eqnarray}
H_*(\mathbf{h},\varphi_0):X(\lambda)^* \to X(\lambda')^*\nonumber
\end{eqnarray}
for each $\mathbf{h}\in {\rm Isom}(\lambda,\lambda')$ and $\varphi_0\in \mathcal{A}(k_{\mathbf{o}_{\lambda'},\mathbf{h}(\mathbf{o}_{\lambda})})$ by gluing $H_{s,\varphi}(\mathbf{h},\varphi_0)$.

Since the submanifold $X(\lambda)\backslash X(\lambda)^*$ is codimension $2$ in $X(\lambda)$, then the above map $H_*(\mathbf{h},\varphi_0)$ is extended to $H(\mathbf{h},\varphi_0):X(\lambda) \to X(\lambda')$ by Hartogs' extension theorem and we have completed the proof of Theorem \ref{main theorem2}.

\section{Applications}
The Riemannian metric on $X(\lambda)$ induced from the hyperk\"ahler structure $\omega_{\lambda}$ becomes Ricci-flat since the holonomy group of hyper-K\"ahler metric is contained in $Sp(1)$. It is shown in \cite{G1} that the Riemannian metric is complete.

Put $\mathbf{I} = \mathbb{Z}_{>0}$ and define $\lambda^{(\beta)}\in ({\rm Im}\mathbb{H})_0^\mathbf{I}$ by
\begin{eqnarray}
\lambda^{(\beta)}_n := n^{\beta}i\nonumber
\end{eqnarray}
for $\beta >1$. Let $f_{\beta_1,\beta_2}(t):=t^{\frac{\beta_2}{\beta_1}}$ for $t\ge 0$ and $f_{\beta_1,\beta_2}(t):=t$ for $t<0$ and $\beta_1,\beta_2 > 1$. Then we have $\mathbf{h}_{\beta_1,\beta_2}\in {\rm Isom}(\lambda^{(\beta_1)},\lambda^{(\beta_2)})$ defined by
\begin{eqnarray}
\mathbf{h}_{\beta_1,\beta_2}(\pi_{\lambda^{(\beta_1)}}(t,z)) := \pi_{\lambda^{(\beta_2)}}(f_{\beta_1,\beta_2}(t),z).\nonumber
\end{eqnarray}
Therefore $(X(\lambda^{(\beta_1)}),\omega_{\lambda^{(\beta_1)},\mathbb{C}})$ is isomorphic to $(X(\lambda^{(\beta_2)}),\omega_{\lambda^{(\beta_2)},\mathbb{C}})$ as holomorphic symplectic manifolds from Section \ref{main result}.

Now we denote by $g_{\beta}$ the Ricci-flat K\"ahler metric induced from the hyper-K\"ahler structure $\omega_{\lambda^{(\beta)}}$.
According to \cite{Hat}, the volume $V_{g_{\beta}}(p_0,r)$ of the geodesic ball in $X(\lambda^{(\beta)})$ with respect to ${g_{\beta}}$ of radius $r>0$ centered at $p_0\in X(\lambda^{(\beta)})$ satisfies
\begin{eqnarray}
0 < \liminf_{r\to +\infty}\frac{V_{{g_{\beta}}}(p_0,r)}{r^{4-\frac{2}{\beta + 1}}} \le \limsup_{r\to +\infty}\frac{V_{{g_{\beta}}}(p_0,r)}{r^{4-\frac{2}{\beta + 1}}} < +\infty.\nonumber
\end{eqnarray}
Thus we have the following result by putting $\alpha = 4-\frac{2}{\beta + 1}$.
\begin{thm}
There exist a complex manifold of dimension $2$ who has a family of complete Ricci-flat K\"ahler metrics $\{ g_{\alpha}\}_{3<\alpha <4}$ with
\begin{eqnarray}
0 < \liminf_{r\to +\infty}\frac{V_{g_{\alpha}}(p_0,r)}{r^{\alpha}} \le \limsup_{r\to +\infty}\frac{V_{g_{\alpha}}(p_0,r)}{r^{\alpha}} < +\infty.\nonumber
\end{eqnarray}
\end{thm}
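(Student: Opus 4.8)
The plan is to read off the theorem from the three ingredients assembled immediately above it: the explicit order-isomorphisms $\mathbf{h}_{\beta_1,\beta_2}$, the biholomorphism machine underlying Theorem~\ref{main theorem2}, and the volume-growth estimate quoted from \cite{Hat}. The only genuinely new work is a transport-of-structure argument together with a bookkeeping of the parameter, so I would present the proof in three moves.

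First I would fix a reference parameter $\beta_0\in(1,\infty)$ once and for all and set $X:=X(\lambda^{(\beta_0)})$, regarded purely as a complex manifold of complex dimension $2$. For every $\beta>1$ one checks that $\mathbf{h}_{\beta_0,\beta}$ lies in ${\rm Isom}(\lambda^{(\beta_0)},\lambda^{(\beta)})$ by verifying its three defining properties through the underlying map $(t,z)\mapsto(f_{\beta_0,\beta}(t),z)$ on ${\rm Im}\mathbb{H}$: it fixes the $\mathbb{C}$-coordinate, hence commutes with $[{\rm pr}_{\mathbb{C}}]$; the function $f_{\beta_0,\beta}$ is a strictly increasing homeomorphism of $\mathbb{R}$, hence $\mathbf{h}_{\beta_0,\beta}$ preserves $\preceq$ and is itself a homeomorphism; and $f_{\beta_0,\beta}$ carries the discrete singular set $\{-n^{\beta_0}\}_n$ bijectively onto $\{-n^{\beta}\}_n$ (this is where the explicit form of $f_{\beta_0,\beta}$ enters), so that it matches $Z_{\lambda^{(\beta_0)}}$ with $Z_{\lambda^{(\beta)}}$ and descends to the quotients. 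Feeding $\mathbf{h}_{\beta_0,\beta}$, together with any admissible $\varphi_0$, into the construction $H(\mathbf{h},\varphi_0)$ of Section~\ref{main result} then produces a biholomorphism $H_\beta:X\to X(\lambda^{(\beta)})$ with $H_\beta^*\omega_{\lambda^{(\beta)},\mathbb{C}}=\omega_{\lambda^{(\beta_0)},\mathbb{C}}$; in particular $H_\beta$ is a biholomorphism of the underlying complex manifolds.

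Next I would transport the hyper-K\"ahler metrics onto the single manifold $X$. For $\alpha:=4-\frac{2}{\beta+1}$ I set $g_\alpha:=H_\beta^*g_\beta$. Because $H_\beta$ is biholomorphic, $g_\alpha$ is K\"ahler for the fixed complex structure of $X$ --- its fundamental form is the closed positive $(1,1)$-form $H_\beta^*\omega_{\lambda^{(\beta)},1}$ --- and because $H_\beta$ is a Riemannian isometry of $(X,g_\alpha)$ onto $(X(\lambda^{(\beta)}),g_\beta)$, both Ricci-flatness and completeness are inherited. The essential point, which is what keeps the conclusion from being vacuous, is that $H_\beta$ is \emph{not} a Riemannian isometry between the distinct manifolds $X(\lambda^{(\beta)})$; it only identifies them as complex (indeed holomorphic-symplectic) manifolds, so the $g_\alpha$ are genuinely different metrics living on one and the same complex surface. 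Being an isometry onto its target, $H_\beta$ carries geodesic balls to geodesic balls, giving $V_{g_\alpha}(p_0,r)=V_{g_\beta}(H_\beta(p_0),r)$ for all $p_0\in X$ and $r>0$; the estimate of \cite{Hat}, whose growth exponent is $4-\frac{2}{\beta+1}=\alpha$, then yields exactly the asserted bounds for $g_\alpha$ and arbitrary $p_0$.

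Finally I would note that $\beta\mapsto 4-\frac{2}{\beta+1}$ is a continuous strictly increasing bijection of $(1,\infty)$ onto $(3,4)$, so that $\{g_\alpha\}_{3<\alpha<4}$ is a genuine family on $X$ parametrised as claimed, which completes the proof. I expect the metric and volume statements to be routine transport-of-structure and quotation; the one step deserving real care is the verification that $\mathbf{h}_{\beta_1,\beta_2}\in{\rm Isom}$, since that is precisely what licenses the invocation of Theorem~\ref{main theorem2}, paired with the observation that $H_\beta$ fails to be a Riemannian isometry --- the feature that makes distinct volume growth on a fixed complex manifold possible.
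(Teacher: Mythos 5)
Your proposal is correct and follows essentially the same route as the paper: it verifies that $\mathbf{h}_{\beta_0,\beta}\in{\rm Isom}(\lambda^{(\beta_0)},\lambda^{(\beta)})$, invokes the biholomorphism $H(\mathbf{h},\varphi_0)$ from Section~\ref{main result} to identify all the $X(\lambda^{(\beta)})$ with one fixed complex surface, and quotes the volume growth exponent $4-\frac{2}{\beta+1}$ from \cite{Hat} with the reparametrisation $\alpha=4-\frac{2}{\beta+1}$. The only difference is that you spell out the transport-of-structure step (pulling back $g_\beta$ to a single reference manifold and checking that K\"ahlerity, Ricci-flatness, completeness and the volume function are preserved), which the paper leaves implicit; note only that, with $f_{\beta_1,\beta_2}$ exactly as written (the identity on $t<0$), the puncture set $\{(-n^{\beta_1},0)\}$ is fixed rather than sent to $\{(-n^{\beta_2},0)\}$, so the matching of singular sets you rightly single out as the key check requires putting the nontrivial branch of $f_{\beta_1,\beta_2}$ on the correct half-line (harmless, since $X(\lambda)\cong X(-\lambda)$).
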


The above argument can be generalized as follows. 
Let $\mathbf{I} = \mathbb{Z}_{>0}$ and take $\lambda, \lambda'\in ({\rm Im}\mathbb{H})_0^\mathbf{I}$ to be 
\begin{eqnarray}
\lambda_n = a_n i,\ \lambda'_n = a_n' i,\nonumber
\end{eqnarray}
where $a_n,a_n'\in \mathbb{R}$ satisfy $a_1<a_2<\cdots$ and $a'_1<a'_2<\cdots$. Then there exists a homeomorphism $f : \mathbb{R} \to \mathbb{R}$ such that $f(a_n) = a'_n$, and we can construct $\mathbf{h}\in {\rm Isom}(\lambda,\lambda')$.

Moreover, we can consider more general settings.
Let $\triangle_{\lambda} := \{\lambda_{n,\mathbb{C}}\in \mathbb{C}; \ n\in\mathbf{I}\} $ and $\triangle_{\lambda'}$ are discrete and closed subsets of $\mathbb{C}$. Assume $\triangle_{\lambda} = \triangle_{\lambda'}$, and for each $z\in \triangle_{\lambda}$, $F(\lambda,z) := \{\lambda_{n,\mathbb{R}}\in\mathbb{R};\ \lambda_{n,\mathbb{C}} = z\}$ and $F(\lambda',z)$ are isomorphic as ordered sets. Here, the order structures on $F(\lambda,z)$ is naturally induced from $\mathbb{R}$.
Under these assumptions, we may construct a homeomorphism $f_z:\mathbb{R}\to\mathbb{R}$ such that $f_z(F(\lambda,z)) = F(\lambda',z)$ for each $z\in \triangle_{\lambda} = \triangle_{\lambda'}$, then extend them to a homeomorphism $\tilde{f} : {\rm Im}\mathbb{H} \to {\rm Im}\mathbb{H}$ such that $\tilde{f} (t,z) = (f_z(t),z)$ for $z\in \triangle_{\lambda} = \triangle_{\lambda'}$. Thus we have the following result.
\begin{thm}
Let $\lambda, \lambda'\in ({\rm Im}\mathbb{H})_0^\mathbf{I}$ be generic and satisfy $\triangle_{\lambda} = \triangle_{\lambda'}$. If $\triangle_{\lambda}\subset\mathbb{C}$ is discrete and closed and $F(\lambda,z) \cong F(\lambda',z)$ as ordered sets for each $z\in\triangle_{\lambda}$, then $X(\lambda) \cong X(\lambda')$ as holomorphic symplectic manifolds.\label{thm 6.2}
\end{thm}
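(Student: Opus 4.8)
The plan is to reduce the whole statement to the construction of a single element $\mathbf{h}\in{\rm Isom}(\lambda,\lambda')$, after which the machinery of Section \ref{main result} (which establishes Theorem \ref{main theorem2}) immediately produces a $\mathbb{C}^{\times}$-equivariant biholomorphism $H(\mathbf{h},\varphi_0):X(\lambda)\to X(\lambda')$ with $H(\mathbf{h},\varphi_0)^{*}\omega_{\lambda',\mathbb{C}}=\omega_{\lambda,\mathbb{C}}$, i.e.\ an isomorphism of holomorphic symplectic manifolds. So the entire task is to build a homeomorphism $\mathbf{h}:{\rm Im}\mathbb{H}/\sim_{\lambda}\to{\rm Im}\mathbb{H}/\sim_{\lambda'}$ preserving $\preceq$ and satisfying $[{\rm pr}_{\mathbb{C}}]_{\lambda'}\circ\mathbf{h}=[{\rm pr}_{\mathbb{C}}]_{\lambda}$. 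The cleanest route is to lift the problem to ${\rm Im}\mathbb{H}$ and produce a fibre-preserving homeomorphism $\tilde{f}:{\rm Im}\mathbb{H}\to{\rm Im}\mathbb{H}$ of the form $\tilde{f}(t,z)=(f_{z}(t),z)$ that carries $Z_{\lambda}$ bijectively onto $Z_{\lambda'}$ (and hence $Y_{\lambda}$ onto $Y_{\lambda'}$), since any such $\tilde{f}$ descends to the desired $\mathbf{h}$.

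First I would work one complex fibre at a time. Under the identification ${\rm Im}\mathbb{H}=\mathbb{R}\times\mathbb{C}$ coming from $\zeta=\zeta_{\mathbb{R}}i-\zeta_{\mathbb{C}}k$, the points of $Z_{\lambda}$ lying over a fixed value $z\in\mathbb{C}$ form a discrete closed subset of $\mathbb{R}$ which, up to the cosmetic reflection $x\mapsto -x$, is the ordered set $F(\lambda,z)$, and likewise for $\lambda'$. Because $\triangle_{\lambda}=\triangle_{\lambda'}$, exactly the same fibres carry removed points for $\lambda$ and for $\lambda'$; and because $F(\lambda,z)\cong F(\lambda',z)$ as ordered sets, the order isomorphism (composed with the two reflections, which cancel to leave an increasing correspondence) extends to an orientation preserving homeomorphism $f_{z}:\mathbb{R}\to\mathbb{R}$ sending the $\lambda$-points onto the $\lambda'$-points. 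For $z\notin\triangle_{\lambda}$ there is nothing to match.

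The one genuinely delicate point is to choose the family $\{f_{z}\}$ so that it depends continuously on $z$, so that $\tilde{f}$ is an honest homeomorphism of $\mathbb{R}^{3}$ rather than merely a fibrewise bijection. Here I would exploit that $\triangle_{\lambda}$ is discrete and closed: each $z_{0}\in\triangle_{\lambda}$ has a neighbourhood $U$ meeting $\triangle_{\lambda}$ only in $z_{0}$, and on $U$ one must interpolate between the prescribed $f_{z_{0}}$ at the centre and the identity near $\partial U$. This is possible because the set of orientation preserving self-homeomorphisms of $\mathbb{R}$ is a convex subset of $C(\mathbb{R},\mathbb{R})$ (a convex combination of strictly increasing homeomorphisms is strictly increasing, continuous, and still tends to $\pm\infty$ at $\pm\infty$, hence is again a homeomorphism). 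Thus $f_{z}:=\rho(z)f_{z_{0}}+(1-\rho(z)){\rm id}$, with a continuous cutoff $\rho:U\to[0,1]$ satisfying $\rho(z_{0})=1$ and $\rho|_{\partial U}=0$, does the job and glues with the identity outside $U$. Since each $f_{z}$ is an increasing homeomorphism and the family varies continuously, both $\tilde{f}(t,z)=(f_{z}(t),z)$ and its fibrewise inverse $(s,z)\mapsto(f_{z}^{-1}(s),z)$ are continuous, so $\tilde{f}$ is a homeomorphism with $\tilde{f}(Z_{\lambda})=Z_{\lambda'}$.

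Finally I would descend. Being fibre-preserving and carrying $Z_{\lambda}$ onto $Z_{\lambda'}$, the map $\tilde{f}$ respects $\sim_{\lambda}$ and $\sim_{\lambda'}$: condition (ii) is immediate, while for condition (i) an increasing $f_{z}$ sends a segment in a fibre avoiding $Z_{\lambda}$ to a segment avoiding $Z_{\lambda'}$. Hence $\tilde{f}$ induces a bijection $\mathbf{h}$ on the quotients; it is a homeomorphism because $\pi_{\lambda},\pi_{\lambda'}$ are quotient maps and $\pi_{\lambda'}\circ\tilde{f}=\mathbf{h}\circ\pi_{\lambda}$, it preserves $\preceq$ because each $f_{z}$ is increasing, and it commutes with the projections to $\mathbb{C}$ by construction, so $\mathbf{h}\in{\rm Isom}(\lambda,\lambda')$. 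After the standard reduction to $\lambda_{n,\mathbb{R}}\neq0\neq\lambda'_{n,\mathbb{R}}$ via $X(\lambda)\cong X(\lambda+\underline{\eta})$, Section \ref{main result} then yields $X(\lambda)\cong X(\lambda')$ as holomorphic symplectic manifolds. The main obstacle, as flagged above, is precisely the continuity of $z\mapsto f_{z}$ across the isolated singular fibres over $\triangle_{\lambda}$; once the convexity observation reduces this to an interpolation problem, the remaining verifications are formal.
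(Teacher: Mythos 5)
Your proposal is correct and follows essentially the same route as the paper: build fibrewise order isomorphisms $f_z$ between the $\lambda$- and $\lambda'$-points over each $z\in\triangle_{\lambda}$, assemble them into a fibre-preserving homeomorphism $\tilde{f}(t,z)=(f_z(t),z)$ of ${\rm Im}\mathbb{H}$ that descends to an element of ${\rm Isom}(\lambda,\lambda')$, and then invoke the machinery of Section \ref{main result}. The only difference is that you spell out the interpolation argument guaranteeing continuity of $z\mapsto f_z$ across the isolated singular fibres, a point the paper leaves implicit.
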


{\bf Acknowledgment.} The author was supported by Global COE program ``The research and training center for new development in Mathematics" of Graduate School of Mathematical Sciences, the University of Tokyo.

\bibliographystyle{plain}

\end{document}